\newcommand{\grdim}{\operatorname{grdim}}
\newcommand{\rk}{\operatorname{rk}}
\newcommand{\inter}{R}
\newcommand{\iprec}{\prec\prec}
\newcommand{\grph}{G}
\newcommand{\Tors}{\mathfrak{T}}
\newcommand{\Torf}{\mathfrak{F}}
\newcommand{\Cls}{\mathfrak{C}}
\newcommand{\BZ}{\mathcal{R}}
\newcommand{\linT}{\mathcal{T}}
\newcommand{\bss}{\mathtt{b}}
\newcommand{\altbss}{\mathtt{c}}
\newcommand{\bssaux}{\mathtt{f}}
\newcommand{\bssauxx}{\mathtt{g}}
\newcommand{\mtch}{\mathcal{M}}
\newcommand{\mtching}{\mathfrak{R}}
\newcommand{\Mult}{\mathfrak{M}}
\renewcommand{\subset}{\subseteq}
\renewcommand{\supset}{\supseteq}
\newcommand{\JH}{\operatorname{JH}}
\newcommand{\cn}{\mu}
\newcommand{\SI}{SI}
\newcommand{\prm}{\lambda}
\newcommand{\std}{\zeta}
\newcommand{\op}{\circ}
\newcommand{\GL}{\operatorname{GL}}
\newcommand{\Irrcomp}{\operatorname{Comp}}
\newcommand{\one}{\mathbf{1}}
\newcommand{\bfd}{\bm{d}}
\newcommand{\m}{\mathfrak{m}}
\newcommand{\n}{\mathfrak{n}}
\newcommand{\bs}{\backslash}
\newcommand{\C}{\mathbb{C}}
\newcommand{\Z}{\mathbb{Z}}
\newcommand{\rest}{\big |}
\newcommand{\N}{\mathbb{N}}
\newcommand{\Hom}{\operatorname{Hom}}
\newcommand{\Ext}{\operatorname{Ext}}
\newcommand{\ext}{\operatorname{ext}}
\newcommand{\Irr}{\operatorname{Irr}}
\newcommand{\soc}{\operatorname{soc}}                          
\newcommand{\tr}{\operatorname{tr}}
\newcommand{\fl}{\mathcal{F}}
\newcommand{\rshft}[1]{\overset{\rightarrow}{#1}}               
\newcommand{\id}{\operatorname{id}}
\newcommand{\End}{\operatorname{End}}
\newcommand{\Ker}{\operatorname{Ker}}
\newcommand{\Coker}{\operatorname{Coker}}
\newcommand{\abs}[1]{\left|{#1}\right|}
\newcommand{\KJ}{\mathcal{K}}
\newcommand{\sprt}{\mathrel{\reflectbox{\rotatebox[origin=c]{315}{$\pitchfork$}}}}
\newcommand{\indx}{\operatorname{ind}}
\newcommand{\hind}{\operatorname{h-ind}}
\newcommand{\Irrsat}{\Irr_{\sigma\text{-sat}}}
\newcommand{\Irrsatb}{\Irr_{\sigma\text{-basic}}}
\newcommand{\Reps}{\mathcal{C}}
\newcommand{\Repsred}{\Reps_{\sigma\text{-red}}}
\newcommand{\Irred}{\Irr_{\sigma\text{-red}}}
\newenvironment{subtheorem}[1]{%
  \def\subtheoremcounter{#1}%
  \refstepcounter{#1}%
  \protected@edef\theparentnumber{\csname the#1\endcsname}%
  \setcounter{parentnumber}{\value{#1}}%
  \setcounter{#1}{0}%
  \expandafter\def\csname the#1\endcsname{\theparentnumber.\Alph{#1}}%
  \expandafter\def\csname theH#1\endcsname{thm.\theparentnumber.\Alph{#1}}%
  \unskip\ignorespaces
}{%
  \setcounter{\subtheoremcounter}{\value{parentnumber}}%
  \ignorespacesafterend
}
\newcounter{parentnumber}
\newtheorem{theorem}{Theorem}[section]
\newtheorem{lemma}[theorem]{Lemma}
\newtheorem{definition}[theorem]{Definition}
\newtheorem{proposition}[theorem]{Proposition}
\newtheorem{corollary}[theorem]{Corollary}
\newtheorem{conjecture}[theorem]{Conjecture}
\theoremstyle{remark}
\newtheorem{remark}[theorem]{Remark}
\newtheorem{example}[theorem]{Example}
\newtheorem{question}[theorem]{Question}
\numberwithin{equation}{section}
\begin{document}

\title[Irreducible components of nilpotent varieties {II}]{A binary operation on irreducible components of
Lusztig's nilpotent varieties {II}: applications and conjectures for representations of $\GL_n$ over a non-archimedean local field}

\author{Erez Lapid}
\email{erez.m.lapid@gmail.com}
\address{Department of Mathematics\\Weizmann Institute of Science\\Rehovot 7610001 Israel}
\author{Alberto M\'{\i}nguez}
\email{minguez.alberto@gmail.com}
\address{Faculty of Mathematics\\University of Vienna\\Oskar-Morgenstern-Platz 1\\
1090 Wien Austria}

\date{\today}

\maketitle

\begin{abstract}
In the first part of the paper we defined and studied a binary operation on the set of irreducible components of Lusztig's nilpotent varieties of a quiver.
For type $A$ we conjecture, following Geiss and Schr\"oer, that this operation is compatible with taking the socle of parabolic induction of representations of general
linear groups over a local non-archimedean field, at least when one of the irreducible components is rigid.
We verify this conjecture in special cases.
\end{abstract}

\setcounter{tocdepth}{1}
\tableofcontents

\section{Introduction}

Let $F$ be a non-archimedean local field.
The representation theory of the locally compact, totally disconnected groups $\GL_n(F)$, $n\ge0$
plays a particularly important role in the general theory of representations of $p$-adic groups.
(All representations considered here are tacitly complex and smooth.)
It was studied by Bernstein and Zelevinsky in their fundamental work in the 1970s, where they highlight the advantage of
working with all $n$'s together \cite{MR0425030, MR579172, MR584084}.
(For simplicity, we refer to it collectively as representation theory of $\GL$.)
Normalized parabolic induction, which following the usual convention is written as $\pi_1\times\pi_2$,
endows the category of finite length representations of $\GL_n(F)$, $n\ge0$ with the structure
of a ring category over $\C$ (i.e., a locally finite $\C$-linear abelian monoidal category, where the tensor functor is bilinear and  biexact, and $\End(\one)=\C$).
The associativity constraints are given by transitivity of parabolic induction and the unit object is given by the irreducible (one-dimensional)
representation of $\GL_0=1$.

Compared to a general reductive group, $\GL_n$ is simpler in that $L$-packets are singletons and roughly speaking,
``all supercuspidal representations look the same''.

It is well known that the representation theory of $\GL$ is closely related to the representation theory of quantum groups of type $A$ \cite{MR1985725}.
In particular, via Lusztig's canonical basis, the irreducible objects are classified by the set $\Irrcomp$ of
irreducible components of nilpotent varieties of type $A$, which are in turn indexed by multisegments \cite{MR1035415, MR1088333}.
We denote this correspondence by $C\mapsto\pi(C)$.
The nilpotent varieties classify modules of a given graded dimension of the preprojective algebra.
(For a general quiver, one adds a nilpotency condition.)

A special role is played by the $\square$-irreducible representations, namely those (irreducible) $\pi$'s
such that $\pi\times\pi$ is irreducible. For instance, every unitarizable irreducible representation
is $\square$-irreducible. The $\square$-irreducible representations and their analogs (``real'' in Leclerc's terminology) were studied by many people
including Geiss, Hernandez, Kang, Kashiwara, Kim, Leclerc, Oh, Schr\"oer and others
in the context of monoidal categorification of cluster algebras \cite{MR2682185, MR2242628, MR3758148}.
We refer the reader to \cite{MR3966729} and \cite{1902.01432} for recent surveys on this topic.
It is conjectured that $\pi(C)$ is $\square$-irreducible if and only if $C$ is rigid, i.e.,
$C$ contains an open orbit.
This was proved in the so-called regular case in \cite{MR3866895}, together with
a purely combinatorial characterization of $\square$-irreducibility in this case.

One of the nice properties of $\square$-irreducible representations is that if $\pi_1$ or $\pi_2$ is $\square$-irreducible,
then the socle $\soc(\pi_1\times\pi_2)$
is irreducible and occurs with multiplicity one in the Jordan--H\"older sequence of $\pi_1\times\pi_2$.
This raises the question of describing this socle explicitly. This is our motivating question.
It naturally leads to look for a similar operation on $\Irrcomp$.
Such an operation was conceived in the first part of the paper \cite{2103.12027} (for any quiver).
Namely, given any two irreducible components $C_1,C_2\in\Irrcomp$, a third one, denoted
$C_1*C_2$, was defined to be the closure of the constructible set formed by all
possible extensions of $x_1$ by $x_2$ for all $(x_1,x_2)$ in a suitable open subset of $C_1\times C_2$.

It is natural to expect that $\pi(C_1*C_2)$ is always a subquotient,
or perhaps even a subrepresentation, of $\pi(C_1)\times\pi(C_2)$. We formulate this as our main conjecture
(Conjecture \ref{conj: subconj}).
It is closely related to a conjecture made by Geiss and Schr\"oer in \cite{MR2115084}
in the context of dual canonical bases of simply laced Dynkin diagrams.
A special case of Conjecture \ref{conj: subconj} was proposed in \cite{MR4169050}.
(We were unaware of \cite{MR2115084} at the time.)
In the case where $C_1$ or $C_2$ is rigid, one gets an especially nice necessary and sufficient (conjectural)
irreducibility criterion for $\pi(C_1)\times\pi(C_2)$, namely $C_1*C_2=C_2*C_1$.
As our best evidence so far for the main conjecture,
we show that it holds if $C_1$ or $C_2$ is regular and rigid (namely, the case analyzed in \cite{MR3866895}).
This strengthens the main result of \cite{MR4169050} which proves the weaker conjecture of [ibid.]
in a more restricted case.

We now describe the contents of the paper in more detail.

We first review and explicate the pertinent material of \cite{2103.12027} for $Q=A_n$ (\S\ref{sec: An}).
The computations of $C_1*C_2$, $\hom(C_1,C_2)$ and $\ext^1(C_1,C_2)$ (the generic dimensions
of $\Hom$ and $\Ext^1$) become concrete and efficient, as long as we are content
with randomized algorithms.
We give examples of rigid and non-rigid irreducible components.

In \S\ref{sec: GLn} we recall the bijection between $\Irrcomp$ and (a representative class of) irreducible representations of $\GL$.
We also discuss a variant and special cases of the conjectures of Geiss and Schr\"oer.

The notion of $\square$-irreducible representations and their basic properties, following
Kang--Kashiwara--Kim--Oh \cite{MR3314831, MR3758148}, are discussed in \S\ref{sec: sqr}.

The main new conjecture, relating the binary operation $*$ to the socle of the parabolic induction,
at least in the case where one of the irreducible components is rigid, is stated in \S\ref{sec: mainconj}.
We give a few sanity checks and prove it in the special case where $C_1$ or $C_2$ corresponds to a single segment.

The results of \cite{MR3866895}, characterizing $\square$-irreducibility in the regular case,
are reviewed in \S\ref{sec: regms}.
We then state our best evidence so far for Conjecture \ref{conj: subconj}, to wit, that it holds if
$C_1$ or $C_2$ is regular and rigid (Theorem \ref{thm: cor}).
As an aside, we give a simple necessary and sufficient condition for a regular representation to be a prime element in the Bernstein--Zelevinsky ring.
We also formulate an intriguing, purely geometric, duality conjecture of Anton Mellit for
Schubert varieties of type $A$.

In \S\ref{sec: Basic} we consider a special case of the
Baumann--Kamnitzer--Tingley decompositions of $\Irrcomp$ \cite{MR3270589} and its analogue to representation theory of $\GL$.
This is used in the proof of Theorem \ref{thm: cor} in \S\ref{sec: endofproof}, which also builds on ideas from \cite{MR3573961, MR3866895, MR4169050}.
Clearly, more ideas are needed to go beyond the regular case.

We end the paper with a highly speculative conjectural characterization of the irreducible subquotients of any product
$\pi_1\times\dots\times\pi_k$ (\S\ref{sec: odds}).

\subsection*{Acknowledgement}
We are grateful to Bernard Leclerc for useful correspondence and to Max Gurevich for helpful discussions.
The first-named author would like to thank the Hausdorff Research Institute for Mathematics in Bonn for its
generous hospitality in July 2021.
Conversations with Jan Schr\"oer during that time were particularly helpful and we are very grateful to him.
We also thank Anton Mellit for useful discussions and for allowing us to include Conjecture \ref{conj: Mellit} here.
Special thanks are due to Avraham Aizenbud for many conversations on the subject.
Last but not least, we are greatly indebted to the referee for reading the paper carefully and making many useful suggestions.

\section{Type \texorpdfstring{$A_n$}{An}} \label{sec: An}

We consider the quiver $Q$ of type $A_n$ with the standard orientation
\[
\bullet\rightarrow\bullet\rightarrow\dots\rightarrow\bullet
\]
Thus, a representation of $Q$ is a graded vector space $V=\oplus_{i=1}^nV_i$, together with a degree $1$ linear transformation
$T_+:V\rightarrow V$.\footnote{As in \cite{2103.12027} all vector spaces and varieties in this section are over a fixed algebraically closed field.}
In particular, $T_+$ is nilpotent of degree at most $n$.
We will freely use the terminology and the results of \cite{2103.12027}.
(However, we no longer use $I$ to denote the index set of the vertices.)

We identify the set $\Psi$ of positive roots with \emph{segments} $[i,j]$, $1\le i\le j\le n$.
(If $\alpha_1,\dots,\alpha_n$ are the simple roots in the usual ordering, then
$[i,j]$ corresponds to $\alpha_i+\dots+\alpha_j$.)

\subsection{} \label{sec: MSAn}
Recall that $\Psi$ classifies the indecomposable modules.
In the case at hand, to each segment $\Delta=[i,j]$ the corresponding representation $M_Q(\Delta)$ is a Jordan block whose
graded dimension $\grdim\Delta$ is the indicator function of $[i,j]$ (with $T_+^{j-i}\ne0$).
The simple objects (corresponding to the simple roots) are $M_Q([i,i])$, $i=1,\dots,n$.
The projective ones are $M_Q([i,n])$, $i=1,\dots,n$.

The abelian monoid $\Mult=\Mult^n=\N\Psi$ freely generated by $\Psi$ classifies all $Q$-representations up to isomorphism.
The elements of $\Mult$ are called multisegments. A typical element $\m\in\Mult$ is written as
\[
\m=\Delta_1+\dots+\Delta_k
\]
where $\Delta_1,\dots,\Delta_k$ are segments. (Their order does not matter.)
We write $M_Q(\m)$ for the corresponding representation. Extending $\grdim$ additively from $\Psi$ to $\Mult$, we have a disjoint union decomposition
\[
\Mult=\coprod_{\bfd\in\N^n}\Mult(\bfd)\text{ where }\Mult(\bfd)=\{\m\in\Mult\mid\grdim\m=\bfd\}.
\]
Fix $V=\oplus_{i=1}^nV_i$ of graded dimension $\bfd=(\bfd(1),\dots,\bfd(n))\in\N^n$
and set $V_i=0$ and $\bfd(i)=0$ if $i\notin\{1,\dots,n\}$.
Let $G_V=\prod_{i=1}^n\GL(V_i)$.
The $G_V$-orbits on
\[
R_Q(V)=\{T_+:V\rightarrow V\mid T_+(V_i)\subset V_{i+1}\text{ for all }i\}
\]
under conjugation are parameterized by $\Mult(\bfd)$.
It is easy to determine the multisegment corresponding to the $G_V$-orbit of $T_+$
from the ranks of $T_+^j\rest_{V_i}$, $1\le i\le i+j\le n$ (see e.g., \cite[\S1]{MR1371654}).

Let $\Delta=[a,b]$ and $\Gamma=[c,d]$ be two segments.
Following Zelevinsky, we say that $\Delta$ precedes $\Gamma$ if $a+1\le c\le b+1\le d$ and denote it by $\Delta\prec\Gamma$.
We have
\[
\dim\Hom_Q(M_Q(\Gamma),M_Q(\Delta))=\begin{cases}1&\text{if }\Delta\prec\rshft\Gamma,\\0&\text{otherwise,}\end{cases}
\]
where $\rshft\Gamma=[c+1,d+1]$, and
\[
\dim\Ext^1_Q(M_Q(\Delta),M_Q(\Gamma))=\begin{cases}1&\text{if }\Delta\prec\Gamma,\\0&\text{otherwise.}\end{cases}
\]

Thus, for any multisegments $\m,\n$ we have
\begin{subequations}
\begin{equation} \label{eq: notprec}
\begin{gathered}
\Ext_Q^1(M_Q(\m),M_Q(\n))=0\iff\\\Delta\not\prec\Gamma\text{ for every segment $\Delta$ of $\m$ and $\Gamma$ of $\n$}
\end{gathered}
\end{equation}
and
\begin{equation}\label{eq: notprec2}
\begin{gathered}
\Hom_Q(M_Q(\n),M_Q(\m))=0\iff\\\Delta\not\prec\rshft\Gamma\text{ for every segment $\Delta$ of $\m$ and $\Gamma$ of $\n$.}
\end{gathered}
\end{equation}
\end{subequations}

For any graded vector space $V=\oplus_{i=1}^nV_i$, the open $G_V$-orbit in $R_Q(V)$ corresponds to the multisegment $\m\in\Mult(\bfd)$ such that
$\Delta_i\not\prec\Delta_j$ for any two segments in $\m$.

Recall that we can identify
\[
R_{Q^{\op}}(V)=\{T_-:V\rightarrow V\mid T_-(V_i)\subset V_{i-1}\text{ for all }i\}
\]
with the dual space of $R_Q(V)$ by the pairing $(T_+,T_-)=\tr T_+T_-=\tr T_-T_+$.
It follows from \cite[(2.11)]{2103.12027} that, for any two segments $\Delta,\Gamma$ we have
\begin{gather*}
\dim\Hom_{Q^{\op}}(M_{Q^{\op}}(\Delta),M_{Q^{\op}}(\Gamma))=\begin{cases}1&\text{if }\Delta\prec\rshft\Gamma,\\0&\text{otherwise,}\end{cases}\\
\dim\Ext^1_{Q^{\op}}(M_{Q^{\op}}(\Gamma),M_{Q^{\op}}(\Delta))=\begin{cases}1&\text{if }\Delta\prec\Gamma,\\0&\text{otherwise.}\end{cases}
\end{gather*}
Hence, for every two multisegments $\m$ and $\n$ we have
\begin{gather*}
\Ext_{Q^{\op}}^1(M_{Q^{\op}}(\n),M_{Q^{\op}}(\m))=0\iff\\
\Delta\not\prec\Gamma\text{ for every segment $\Delta$ of $\m$ and $\Gamma$ of $\n$}
\end{gather*}
and
\begin{equation}\label{eq: notprec'}
\begin{gathered}
\Hom_{Q^{\op}}(M_{Q^{\op}}(\m),M_{Q^{\op}}(\n))=0\iff\\
\Delta\not\prec\rshft\Gamma\text{ for every segment $\Delta$ of $\m$ and $\Gamma$ of $\n$.}
\end{gathered}
\end{equation}

A special feature of $Q$ is that it is isomorphic to $Q^{\op}$ via $i\mapsto n+1-i$.
This yields an (equivariant) equivalence of the categories of representations of $Q$ and $Q^{\op}$.
Composing it with the standard duality $M\mapsto M^*$ between finite-dimensional representations of $Q$ and $Q^{\op}$
we get a self-duality on the category of finite-dimensional representations of $Q$
(or of $Q^{\op}$), which we denote by ${}^\vee$.
For any segment $\Delta=[a,b]$ we have
\[
M_Q(\Delta)^\vee=M_Q(\Delta^\vee),\ \  M_{Q^{\op}}(\Delta)^\vee=M_{Q^{\op}}(\Delta^\vee)
\]
where $\Delta^\vee=[n+1-b,n+1-a]$. Extending this involution to the set of multisegments by additivity we have
\[
M_Q(\m)^\vee=M_Q(\m^\vee),\ \ M_{Q^{\op}}(\m)^\vee=M_{Q^{\op}}(\m^\vee).
\]

\subsection{} \label{sec: basicPP}
Recall that $\overline{Q}$ is the double quiver (i.e., all orientations simultaneously)
\[
\bullet\leftrightarrow\bullet\leftrightarrow\dots\leftrightarrow\bullet
\]
and the preprojective algebra $\Pi$ is the finite-dimensional quotient of the path algebra of $\overline{Q}$ by the relations
\[
e_{i+1,i}e_{i,i+1}-e_{i-1,i}e_{i,i-1},\ \ i=1,\dots,n.
\]
Here $e_{r,s}$, $s=r\pm1$ is the arrow from $r$ to $s$, interpreted as $0$ unless $1\le r,s\le n$.

The nilpotent variety $\Lambda(V)=R_\Pi(V)$ is given by
\[
\Lambda(V)=\{(T_+,T_-)\in R_Q(V)\times R_{Q^{\op}}(V)\mid T_+T_-=T_-T_+\}
\]
and the maps
\[
\pi_Q:\Lambda(V)\rightarrow R_Q(V),\ \ \pi_{Q^{\op}}:\Lambda(V)\rightarrow R_{Q^{\op}}(V)
\]
are the canonical projections.

Recall that we denote the set of irreducible components of $\Lambda(V)$ by
$\Irrcomp(V)$, or simply by $\Irrcomp(\bfd)$ if $\bfd=\grdim V$, and let
\[
\Irrcomp=\bigcup_{\bfd\in\N^n}\Irrcomp(\bfd).
\]
We have a bijection
\[
\prm_Q:\Mult(\bfd)\rightarrow\Irrcomp(\bfd),\ \m\mapsto\overline{\cn_Q(\m)}
\]
where $\cn_Q(\m)$ is the inverse image under $\pi_Q$ of the $G_V$-orbit of $M_Q(\m)$.
Similarly, we have a bijection
\[
\prm_{Q^{\op}}:\Mult(\bfd)\rightarrow\Irrcomp(\bfd).
\]
In fact,
\begin{equation} \label{eq: prm*}
\prm_Q(\m)^*=\prm_{Q^{\op}}(\m)\ \ \forall\m\in\Mult.
\end{equation}
We also have
\[
\prm_Q(\m)=\prm_{Q^{\op}}(\m^t)
\]
where $\m^t$ is the M\oe glin--Waldspurger involution \cite[Proposition II.6]{MR863522} -- see also \cite{MR1371654, MR1734897}.

We recall that by definition, a $\Pi$-module $x$ is \emph{rigid} if $\Ext^1_\Pi(x,x)=0$.
Also, an irreducible component $C\in\Irrcomp(V)$ is rigid, if it contains a rigid element, in which
case the rigid elements form an open orbit in $C$. See \cite[\S4]{2103.12027} for more details.

Observe that by \eqref{eq: notprec2} and \eqref{eq: notprec'}, for any two multisegments $\m,\n$
\begin{equation}\label{eq: notprecPP}
\begin{split}
\text{if $\Delta\not\prec\rshft\Gamma$ for every segment $\Delta$ of $\m$ and $\Gamma$ of $\n$, then}\\
\hom_\Pi(\prm_Q(\n),\prm_Q(\m))=\hom_\Pi(\prm_{Q^{\op}}(\m),\prm_{Q^{\op}}(\n))=0.
\end{split}
\end{equation}
(Recall that by definition,
\[
\hom_\Pi(C_1,C_2)=\min_{(x_1,x_2)\in C_1\times C_2}\dim\Hom_\Pi(x_1,x_2)
\]
for any irreducible components $C_1$, $C_2$. Similarly, we will also use the notation $\ext_\Pi^1(C_1,C_2)$ defined analogously.)

The bijection $\Lambda(V)\rightarrow\Lambda(V^*)$ takes $(T_+,T_-)$ to $(T_-^*,T_+^*)$.
On the other hand, the involution $i\mapsto n+1-i$ of $\overline{Q}$ gives rise to an involution of $\Pi$ and hence to an autoequivalence of the module category of $\Pi$.
Composing it with $M\mapsto M^*$ we get another self-duality $M\mapsto M^\vee$ (which commutes with $^*$).
At the level of irreducible components, we have
\[
\prm_Q(\m)^\vee=\prm_Q(\m^\vee),\ \ \prm_{Q^{\op}}(\m)^\vee=\prm_{Q^{\op}}(\m^\vee).
\]

Finally, we recall the binary operation $*$ on $\Irrcomp$ defined in \cite[\S 3]{2103.12027}.
Given $C_i\in\Irrcomp(V^i)$, $i=1,2$ and $V=V^1\oplus V^2$, $C_1*C_2$ is the Zariski closure of the
set of $x\in\Lambda(V)$ that fit in a short exact sequence
\[
0\rightarrow x_2\rightarrow x\rightarrow x_1\rightarrow0
\]
where $(x_1,x_2)\in C_1\times C_2$ and $\dim\Ext_\Pi^1(x_1,x_2)$ is minimal.

Recall that by definition, $C_1$ and $C_2$ \emph{strongly commute} if $\Ext_\Pi^1(x_1,x_2)=0$
for some $(x_1,x_2)\in C_1\times C_2$. Equivalently, the Zariski closure of the set of modules isomorphic
to $x_1\oplus x_2$ for some $(x_1,x_2)\in C_1\times C_2$ is an irreducible component (necessarily equal
to $C_1*C_2=C_2*C_1$) -- see \cite[Corollary 3.3]{2103.12027}.

If $C_1$ or $C_2$ is rigid, strong commutativity is equivalent to the a priori weaker condition
$C_1*C_2=C_2*C_1$ \cite[Proposition 6.1]{2103.12027}. In this case, we sometimes omit the adjective ``strong''.

By \cite[(3.2)]{2103.12027} we have
\begin{equation} \label{eq: cont*}
(C_1*C_2)^\vee=C_2^\vee*C_1^\vee
\end{equation}
for any $C_1,C_2\in\Irrcomp$.

\subsection{} \label{sec: explint}
Next, we expound \cite[\S9]{2103.12027} in the case at hand,
and in particular the maps $\linT_{M;N}:\Hom_Q(M,N)\rightarrow\Hom_Q(M,\tau N)$ of \cite[(9.8)]{2103.12027}
defined for any $\Pi$-modules $M$ and $N$. The functor $\tau$ (and its left adjoint $\tau^-$) are essentially the Coxeter functors
with respect to $Q$.
Concretely, $\tau$ (resp., $\tau^-$) takes a pair $(\oplus_{i=1}^nV_i,T_+)$ to the pair
$(\oplus_{i=1}^nV_i',T_+')$ where (letting $V_0=V_{n+1}=0$)
\[
V'_i=\Ker(T_+^{n-i+1}\rest_{V_{i-1}})\ (\text{resp., }\Coker(T_+^i\rest_{V_1}))\ \ i=1,\dots,n
\]
and $T'_+$ is the map induced by $T_+$.
In particular, $\tau M_Q(\Delta)=M_Q(\rshft\Delta)$ where by convention $M_Q([i,n+1])=0$ for all $i$,
while $\tau^-(M_Q([a,b]))=M_Q([a-1,b-1])$ again, interpreted as $0$ if $a=1$.

Let $M\in\cn_Q(\m)$, for a multisegment
\[
\m=\sum_{i\in I}\Delta_i.
\]
Then, there exists a graded basis $\bssaux_{i,r}$, $i\in I$, $r\in\Delta_i$ (of degree $r$) for $M$
(as a graded vector space) such that
\[
e_{r,r+1}\bssaux_{i,r}=\begin{cases}\bssaux_{i,r+1}&\text{if }r+1\in\Delta_i,\\0&\text{otherwise.}\end{cases}
\]
Let
\[
U_{\m}=\{(i,j)\in I\times I\mid\Delta_j\prec\Delta_i\},
\]
so that $\# U_{\m}=\dim\Hom_Q(M_Q(\m),\tau(M_Q(\m)))$.
By \cite[Lemme II.4]{MR863522} there exist scalars $x_{i,j}$, $(i,j)\in U_{\m}$ such that
\[
e_{r+1,r}\bssaux_{i,r+1}=\sum_{j\in I\mid r\in\Delta_j\text{ and }(i,j)\in U_{\m}}x_{i,j}\bssaux_{j,r}
\]
for all $i\in I$ and $r$ such that $r+1\in\Delta_i$.
We call $x_{i,j}$ the coordinates of $M$. \label{sec: coor}
They encode the data of $M$ as a $\Pi$-module (cf.\ \cite[\S9]{2103.12027}).
Of course, the coordinates depend on the choice of the basis $\bssaux_{i,r}$.

Suppose that in addition $N\in\cn_Q(\n)$ where
\[
\n=\sum_{j\in J}\Gamma_j.
\]
Denote by $\bssauxx_{j,r}$, $j\in J$, $r\in\Gamma_j$ a graded basis for $N$ such that
\[
e_{r,r+1}\bssauxx_{j,r}=\begin{cases}\bssauxx_{j,r+1}&\text{if }r+1\in\Gamma_j,\\0&\text{otherwise.}\end{cases}
\]
Let $y_{i,j}$, $(i,j)\in U_{\n}$ be the coordinates of $N$.
Let
\begin{equation} \label{eq: UandV}
U_{\m;\n}=\{(i,j)\in I\times J\mid\Gamma_j\prec\Delta_i\},\ \
V_{\m;\n}=\{(i,j)\in I\times J\mid\Gamma_j\prec\rshft\Delta_i\}.
\end{equation}
In particular, $U_{\m}=U_{\m;\m}$. We will also write $V_{\m}=V_{\m;\m}$ for simplicity.
Note that
\[
\# U_{\m;\n}=\dim\Hom_Q(M_Q(\m),\tau(M_Q(\n)))
\]
and
\[
\# V_{\m;\n}=\dim\Hom_Q(M_Q(\m),M_Q(\n)).
\]
Let $\bss_{i,j}$, $(i,j)\in V_{\m;\n}$ be the basis of $\Hom_Q(\pi_Q(M),\pi_Q(N))$ given by
\[
\bss_{i,j}(\bssaux_{k,r})=\begin{cases}
\bssauxx_{j,r}&\text{if $k=i$ and }r\in\Gamma_j,\\0&\text{otherwise.}\end{cases}
\]
Let $\altbss_{i,j}$, $(i,j)\in U_{\m;\n}$ be the basis of $\Hom_Q(\pi_Q(M),\tau(\pi_Q(N)))$ defined analogously.
Then, for any $(i,j)\in V_{\m;\n}$, $\linT_{M;N}(\bss_{i,j})$ is equal to
\begin{subequations}
\begin{equation} \label{eq: disclinT}
\sum_{k\in I\mid (k,i)\in U_{\m}\text{ and }(k,j)\in U_{\m;\n}}x_{k,i}\altbss_{k,j}
-\sum_{k\in J\mid (j,k)\in U_{\n}\text{ and }(i,k)\in U_{\m;\n}}y_{j,k}\altbss_{i,k}.
\end{equation}
Consequently, for the dual map, for any $(i,j)\in U_{\m;\n}$, $\linT_{M;N}^*(\hat\altbss_{i,j})$ is equal to
\begin{equation} \label{eq: disclinT*}
\sum_{k\in I\mid (i,k)\in U_{\m}\text{ and }(k,j)\in V_{\m;\n}}x_{i,k}\hat\bss_{k,j}
-\sum_{k\in J\mid (k,j)\in U_{\n}\text{ and }(i,k)\in V_{\m;\n}}y_{k,j}\hat\bss_{i,k}
\end{equation}
where $\hat\bss_{i,j}$, $(i,j)\in V_{\m;\n}$ and $\hat\altbss_{i,j}$, $(i,j)\in U_{\m;\n}$ denote the dual bases.
\end{subequations}

\subsection{Computational aspects}\label{sec: compute}


Given two irreducible components $C_1=\prm_Q(\m),C_2=\prm_Q(\n)\in\Irrcomp$ the computation of $\hom_\Pi(C_1,C_2)$,
(and consequently, $\ext_\Pi^1(C_1,C_2)$) and the determination of $C_1*C_2$ reduce to the following type of problem.

Let $A=A(x_1,\dots,x_k)$ be a matrix of size $n_1\times n_2$ whose entries are polynomials
in $k$ variables (say, with integer coefficients) of degree $\le d$.
We would like to determine the generic rank $r$ of $A$ efficiently (i.e., in polynomial time).
This problem is closely related to polynomial identity testing.
Unfortunately, there is no known subexponential deterministic algorithm for this problem.
However, the ``obvious'' Monte Carlo randomized algorithm works well.
More precisely, let $\varepsilon>0$
and choose integers $y_1,\dots,y_k$ independently and uniformly randomly
from a fixed set of size $s\ge d\min(n_1,n_2)\varepsilon^{-1}$.
Then, the rank of $A(y_1,\dots,y_k)$ is equal to $r$ with probability at least $1-\varepsilon$.
Indeed, if $p(x_1,\dots,x_k)$ is an $r\times r$-minor of $A$ that is not identically zero, then by the Schwartz--Zippel lemma \cite{MR594695},
the probability that $p(y_1,\dots,y_k)\ne0$ (and in particular, $\rk A(y_1,\dots,y_k)=r$) is at least
$1-\frac{\deg p}s\ge 1-\frac{dr}s$.

Recall that as in \S\ref{sec: explint} above we can represent an element of $\cn_Q(\m)$ by
coordinates $x_{i,j}$, $(i,j)\in U_{\m}$. Therefore, a ``random'' element of $\cn_Q(\m)$ (and hence of $\prm_Q(\m)$)
is represented by random coordinates.

In order to compute $\hom_\Pi(C_1,C_2)$ we use the relation \cite[(9.3)]{2103.12027} and \eqref{eq: disclinT}
to reduce it to the problem above where $k=\# U_{\m}+\# U_{\n}$, $n_1=\# U_{\m;\n}$, $n_2=\# V_{\m;\n}$
and $d=1$. In fact, the non-zero entries of $A(x_1,\dots,x_k)$ are of the form $\pm x_i$ for some $i$.

In special cases there is an efficient deterministic algorithm for the computation of $\hom_\Pi(C_1,C_2)$
in terms of $\m$, $\n$. (See Remark \ref{rem: matching} below.)
However, we do not know how to do it in general.

By the same token, using \cite[Corollary 9.3]{2103.12027}, the rigidity condition for an irreducible component $C=\prm_Q(\m)$
can be checked efficiently by a randomized algorithm (cf.\ \cite{MR3866895, MR4169050}).
Once again, it would be interesting to find a simple combinatorial criterion or at least an efficient deterministic algorithm for the rigidity of an irreducible component in general.
(See Theorem \ref{thm: LM} below for a special case.)
A closely related problem is to effectively characterize (as a subset) the open $G_V$-orbit in a given rigid irreducible component.

The binary operation $*$ gives rise to a binary operation on multisegments, also denoted by $*$,
so that $\prm_Q(\m)*\prm_Q(\n)=\prm_Q(\m*\n)$.
It would be interesting to give a purely combinatorial description of this operation on multisegments.
As part of our results we will give a recipe for this in a special case -- see Remark \ref{rem: recipebalanced}.

In general, we can determine $C_1*C_2$ (or equivalently, $\m*\n$) by a randomized algorithm.
To that end, we choose $M\in\cn_Q(\m)$, and $N\in\cn_Q(\n)$ with random coordinates.
Using \cite[Proposition 9.1]{2103.12027}, the image under $\pi_Q$ of a random extension of $M$ by $N$
is an extension $E$ of $M_Q(\m)$ by $M_Q(\n)$
which is determined by a random element of $\Ker\linT_{N;M}^*\subset\Hom_Q(M_Q(\n),\tau M_Q(\m))^*
\simeq\Ext^1_Q(M_Q(\m),M_Q(\n))$. The map $\linT_{N;M}^*$ was explicated in \eqref{eq: disclinT*}.
We can therefore recover the multisegment pertaining to $E$, as explained in \S\ref{sec: MSAn}.
Alternatively, choosing random $x_i=(T_i^+,T_i^-)\in C_i$, $i=1,2$, with $T_i^\pm:V^i\rightarrow V^i$
of degree $\pm1$, an extension of $x_1$ by $x_2$ is of the form
\[
x=(T^+,T^-)=((T_1^++T_{1,2}^+)\oplus T_2^+,(T_1^-+T_{1,2}^-)\oplus T_2^-)
\]
for a pair of linear transformations $T_{1,2}^\pm:V^1\rightarrow V^2$ of degrees $\pm1$ satisfying
\[
T_2^+T_{1,2}^-+T_{1,2}^+T_1^-=T_2^-T_{1,2}^++T_{1,2}^-T_1^+
\]
Choosing a random element in this vector space,
the $G_V$-orbit of $T^+$ gives the parameter of $C_1*C_2$ with high probability.

Similarly, given $C,C_1\in\Irrcomp$ with $C_1$ rigid, \cite[Proposition 5.1]{2103.12027} gives rise to an efficient randomized algorithm for finding the unique $C_2\in\Irrcomp$ such that $C=C_1*C_2$, if exists.
Namely, choose random points $x\in C$ and $x_1\in C_1$ and a random element $\varphi\in\Hom_\Pi(x,x_1)$ (using \cite[(9.3)]{2103.12027}).
If $\varphi$ is surjective, then with high probability $\pi_Q(\Ker\varphi)$ gives rise to the multisegment defining $C_2$.
Otherwise, $C_2$ does not exist with high probability.

\subsection{Examples} \label{sec: examPP}

We have already mentioned some simple examples of rigid modules and irreducible components in \cite[\S4]{2103.12027}.
For instance, if every segment in $\m$ is of the form $[i,i]$ (corresponding to simple roots), then
$\prm_Q(\m)$ and $\prm_{Q^{\op}}(\m)$ are rigid.
The same conclusion holds if $\Delta_i\not\prec\Delta_j$ for every segments $\Delta_i,\Delta_j$ of $\m$.

For $n\le 4$, all irreducible components are rigid (and $\Pi$ is representation-finite).

\subsubsection{Laminae and quasi-laminae} \label{sec: lad}
Let $S(r)$, $r=1,\dots,n$ be the simple $\Pi$-module whose graded dimension is $1$ in the $r$-th coordinate and $0$ elsewhere.
Let $P(r)=\Pi e_r$ be the indecomposable projective $\Pi$-module whose cosocle is $S(r)$.
By \cite[(9.9)]{2103.12027} we have
\[
\pi_Q(P(r))=M_Q(\sum_{i=1}^r[i,i+n-r]),\ \ \pi_{Q^{\op}}(P(r))=M_{Q^{\op}}(\sum_{j=1}^{n-r+1}[j,j+r-1]).
\]
The submodule lattice of $P(r)$ is isomorphic to the sublattice of the Young lattice consisting
of Young diagrams contained in the rectangle of size $r\times (n-r+1)$ (cf. \cite{MR1734897}). In particular, it is distributive.

Suppose that $1\le a_1<\dots<a_r$ and $b_1<\dots<b_r\le n$ are two sequences such that $a_i\le b_i+1$ for all $i$.
Consider a graded basis $e_j^i$, $a_i\le j\le b_i$, $i=1,\dots,r$ (graded by $j$) and homogenous maps $T_{\pm}$ of degree $\pm1$ given by
\[
T_+e_j^i=\begin{cases}e_{j+1}^i&j<b_i,\\0&\text{otherwise;}\end{cases}\ \ \ \
T_-e_j^i=\begin{cases}e_{j-1}^{i-1}&\text{if $i>1$ and $j\le b_{i-1}+1$,}\\0&\text{otherwise.}\end{cases}
\]
See diagram below for the sequences $1<3<4$ and $4<6<8$; the dots represent the basis elements and $T_\pm$ are represented by the horizontal and diagonal arrows, respectively.

\[
\scriptscriptstyle{\xymatrix{
&&&\circ\ar[r]\ar[dl]&\circ\ar[r]\ar[dl]&\circ\ar[r]\ar[dl]&\circ\ar[r]\ar[dl]&\circ\\
&&\circ\ar[r]\ar[dl]&\circ\ar[r]\ar[dl]&\circ\ar[r]\ar[dl]&\circ&\\
\circ\ar[r]&\circ\ar[r]&\circ\ar[r]&\circ&&&& }}
\]

The resulting $\Pi$-module $M$ is a subquotient of $P(r)$. Conversely, all subquotients of $P(r)$ are obtained this way.
(The subrepresentations of $P(r)$ correspond to the case $b_i=i+n-r$ for all $i=1,\dots,r$.)
We have
\[
\pi_Q(M)=M_Q(\sum_{i=1}^r[a_i,b_i])
\]
where by convention we discard empty segments. (Such multisegments were called ladders in \cite{MR3163355}.)

Note that $M$ is indecomposable if and only if $a_{i+1}\le b_i+1$ for all $i=1,\dots,r-1$.
Following Ringel's terminology, an indecomposable subquotient of any $P(r)$ is called a \emph{lamina}.\footnote{This terminology is specific for type $A$.}
Direct sums of laminae are termed \emph{laminated modules}.
In particular, any subquotient of $P(r)$ is laminated. (We will call them \emph{quasi-lamina}e.)

Every quasi-lamina is rigid.
This follows from \cite[Lemma 8.1]{2103.12027} by an easy induction. More precisely, if $x=\prm_Q(\m)$
with $\m=[a_1,b_1]+\dots+[a_r,b_r]$ with $a_1<\dots<a_r$ and $b_1<\dots<b_r$, then we may take $x_1=\prm_Q([a_r,b_r])$
and $x_2=\prm_Q([a_1,b_1]+\dots+[a_{r-1},b_{r-1}])$ and use \cite[Corollary 9.3]{2103.12027}.
Indeed, the map $\linT_{x;x_1}$ is trivially surjective since its codomain is $0$.
The map $\linT_{x_1;x}$ is surjective since by \eqref{eq: disclinT},
on the complement of its kernel (which is at most one-dimensional)
it is represented by a square, upper triangular matrix
whose diagonal entries are coordinates of $x$ (which are free variables).

\begin{example} \label{exam: twoladsc}
Consider an irreducible component $C=\prm_Q(\m)$ of a lamina with
\[
\m=\sum_{i=1}^r[a_i,b_i]\text{ with }a_1<\dots<a_r\text{ and }b_1<\dots<b_r.
\]
For each $i$ let $t_i$ be such that $a_i-1\le t_i\le b_i$ and $t_1<\dots<t_r$. Let $C_i=\prm_Q(\m_i)$, $i=1,2$ where
\[
\m_1=\sum_{i=1}^r[a_i,t_i],\ \ \m_2=\sum_{i=1}^r[t_i+1,b_i]
\]
where we ignore any empty segments in the sums. Then, $C=C_1*C_2$.

For instance, in the case
\[
\m=[1,4]+[3,6]+[4,8],\ \m_1=[1,1]+[3,4]+[4,7],\ \m_2=[2,4]+[5,6]+[8,8],
\]
a rigid element of $C$ is represented by the following diagram (cf. \S\ref{sec: lad}).
It is an extension of a rigid element of $C_1$ by a rigid element of $C_2$ where the former (resp., latter) is represented by the
blue (resp., red) dots and arrows.
\[
\scriptscriptstyle{\xymatrix{
&&&{\color{blue}{\circ}}\ar@[blue][r]\ar@[blue][dl]&{\color{blue}{\circ}}\ar@[blue][r]\ar@[blue][dl]&
{\color{blue}{\circ}}\ar@[blue][r]\ar[dl]&{\color{blue}{\circ}}\ar[r]\ar[dl]&{\color{red}{\circ}}\\
&&{\color{blue}{\circ}}\ar@[blue][r]\ar[dl]&{\color{blue}{\circ}}\ar[r]\ar[dl]&{\color{red}{\circ}}\ar@[red][r]\ar@[red][dl]&{\color{red}{\circ}}&\\
{\color{blue}{\circ}}\ar[r]&{\color{red}{\circ}}\ar@[red][r]&{\color{red}{\circ}}\ar@[red][r]&{\color{red}{\circ}}&&&& }}
\]
\end{example}

\begin{remark} \label{rem: matching}
Let $C_1=\prm_Q(\m)$ and $C_2=\prm_Q(\n)$ with $\m=\sum_{i\in I}\Delta_i$ and $\n=\sum_{j\in J}\Gamma_j$.
As in \cite[\S 6.3]{MR3573961}, let
\[
\grph_{\m;\n}=(U_{\m;\n},V_{\m;\n},E_{\m;\n})
\]
be the bipartite graph where $U_{\m;\n}$ and $V_{\m;\n}$ are as in \eqref{eq: UandV} and
\[
\begin{split}
E_{\m;\n}=&\{((i,k),(i,j))\in U_{\m;\n}\times V_{\m;\n}\mid (j,k)\in U_{\n}\}\cup\\
&\{((i,k),(j,k))\in U_{\m;\n}\times V_{\m;\n}\mid (i,j)\in U_{\m}\}.
\end{split}
\]
Denote by $\mtch(\m,\n)$ the condition that $\grph_{\m;\n}$ admits a matching that covers all vertices of $U_{\m;\n}$.\footnote{Recall
that a matching in a graph is a set $\mtching$ of edges, no two of which have a vertex in common.
The vertices covered by $\mtching$ are those which belong to one of the edges in $\mtching$.}
As noted in \cite{MR4169050}, $\mtch(\n,\m)$ is a necessary condition for
$\prm_Q(\m)*\prm_Q(\n)=\prm_Q(\m+\n)$ (cf.\ \cite[(9.4)]{2103.12027} and \eqref{eq: disclinT}).
This condition is not sufficient in general. However, suppose that $C_1$ or $C_2$ is a quasi-lamina.
Then, the conditions $\prm_Q(\m)*\prm_Q(\n)=\prm_Q(\m+\n)$ and $\mtch(\n,\m)$ are equivalent by \cite[Proposition 6.20]{MR3573961}.
In particular, $C_1$ and $C_2$ commute if and only if both $\mtch(\m,\n)$ and $\mtch(\n,\m)$
are satisfied. This criterion is very useful in practice. We will use it in \S\ref{sec: endofproof} where we prove
our main result. (See Lemma \ref{lem: combreg}.)

More generally, let $\mtching_{\m;\n}$ be a matching of $\grph_{\m;\n}$ of maximal size. Then,
using the argument of \cite[\S 6.4]{MR3573961} and \cite[\S 8.3]{MR4169050} one can show that
if $C_1$ or $C_2$ is a quasi-lamina, then $\min_{(M,N)\in \cn_Q(\m)\times\cn_Q(\n)}\dim\Coker\linT_{N;M}$
is equal to the number of vertices in $U_{\m;\n}$ that are not covered by $\mtching_{\m;\n}$.
Equivalently, $\hom_\Pi(C_2,C_1)$ is equal to the number of
vertices of $V_{\m;\n}$ that are not covered by $\mtching_{\m;\n}$.
In particular, since the size of $\mtching_{\m;\n}$ can be computed efficiently by a deterministic algorithm,
the same is true for $\hom_\Pi(C_1,C_2)$ and $\ext^1_\Pi(C_1,C_2)$ in this case.
We omit the details.
\end{remark}

\subsubsection{Non-rigid components}

\begin{example} \label{ex: nonrigid}
An example of a non-rigid irreducible component for $n=5$ was analyzed in detail in \cite{MR2115084}, following \cite{MR1959765}.
Consider $\bfd=(1,2,2,2,1)$, so that $\dim\Lambda(V)=12$ and $\dim G_V=14$.
Let
\[
C=\prm_Q(\m)=\prm_{Q^{\op}}(\m)\text{ where }\m=[4,5]+[2,4]+[3,3]+[1,2].
\]
Then, $C$ is the closure of a one-parameter family $\fl$ of $11$-dimensional orbits.
For any $x\in\fl$ we have $\dim\End_{\Pi}(x)=3$ and $\dim\Ext^1_{\Pi}(x,x)=2$, while
for any two distinct $x,y\in\fl$ we have $\dim\Hom_{\Pi}(x,y)=2$ and $\Ext^1_{\Pi}(x,y)=0$.

Thus, by \cite[Corollary 3.3]{2103.12027}, $C\oplus C$ is an irreducible component even though for any $x\in\fl$ there is a short exact sequence
\[
0\rightarrow x\rightarrow P(2)\oplus P(4)\rightarrow x\rightarrow 0.
\]
\end{example}

\begin{example} \label{ex: CCrigid}
As was pointed out in \cite[Remark 4.3]{2103.12027}, there exist non-rigid components $C$ for which $C*C$ is rigid.
An example for $n=7$ is
\[
C=\prm_Q([4,7]+[5,6]+[2,5]+[3,4]+[1,3])
\]
for which
\[
C*C=\prm_Q([4,7]+[2,7]+[5,6]+[3,6]+[4,5]+[1,5]+[2,4]+[3,3]+[1,3]).
\]
\end{example}

\subsubsection{}
Finally, we explicate a special case of \cite[Lemma 9.5]{2103.12027} (cf.\ \cite[Remark 8.5]{2103.12027}).

\begin{lemma}
Suppose that $\m=\Delta_1+\dots+\Delta_k$ where the $\Delta_i$'s are distinct.
Assume that $C=\prm_Q(\m)$ is rigid. Then, we can write non-trivially $C=C_1*C_2$
where $C_1$ and $C_2$ are rigid and $\ext_\Pi^1(C,C_1)=0$.
\end{lemma}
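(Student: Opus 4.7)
My plan is to deduce this as an explicit application of \cite[Lemma 9.5]{2103.12027} by peeling off a single, carefully chosen segment of $\m$. The distinctness hypothesis is what guarantees both that a suitable choice exists and that the resulting decomposition is non-trivial.

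First I would pick an ``extremal'' segment $\Delta_j$ of $\m$ that can be split off as a quotient in a rigid fashion, analogous to taking $[a_r,b_r]$ in the quasi-lamina induction sketched in \S\ref{sec: lad}. A natural candidate is $\Delta_j=[a_j,b_j]$ with $b_j$ maximal and, among those, $a_j$ maximal. Set $C_1=\prm_Q(\Delta_j)$ and $C_2=\prm_Q(\m-\Delta_j)$. Since $\Delta_j$ is a single segment, $C_1$ is a quasi-lamina and hence rigid. Distinctness of the $\Delta_i$'s together with the (tacit) assumption $k\ge 2$ ensures $C_2$ is non-zero, so the decomposition is non-trivial.

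I would then verify simultaneously the three claims $C=C_1*C_2$, $C_2$ rigid, and $\ext_\Pi^1(C,C_1)=0$, following the template of \S\ref{sec: lad}. Using the explicit form of $\linT_{M_1;M}$ in \eqref{eq: disclinT} applied to $M\in\cn_Q(\m)$ and $M_1\in\cn_Q(\Delta_j)$ with random coordinates, one argues that after a suitable ordering of the bases (determined by the extremal choice of $\Delta_j$), this map is represented on the complement of a small kernel by an upper-triangular matrix whose diagonal entries are distinct free coordinate variables of $M$. By \cite[Corollary 9.3]{2103.12027} this generic surjectivity produces all three conclusions in one stroke: it yields a surjection from a generic $x\in C$ onto an element of $C_1$ whose kernel is forced into $C_2$ by a graded-dimension and multisegment count (recall \S\ref{sec: MSAn}); it gives the rigidity of $C_2$ (the complementary $\linT$ being handled symmetrically, since for $C_1$ a single segment the codomain of the ``other'' $\linT$ is essentially trivial); and it provides the vanishing $\ext_\Pi^1(C,C_1)=0$.

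The main obstacle I expect is precisely this triangularization claim for $\linT_{M_1;M}$. One must show that the bipartite structure arising from $V_{\m;\Delta_j}$ and $U_{\m;\Delta_j}$ (in the notation of \eqref{eq: UandV}) admits a system of distinct representatives supported on coordinates of $M$ that appear independently in the matrix, and this is where both the distinctness of segments and the extremal choice of $\Delta_j$ are essential: multiplicities in $\m$ would create repeated variables on the diagonal, and a non-extremal $\Delta_j$ would introduce obstructions making the matrix non-triangularizable. Once this combinatorial--linear-algebraic step is nailed down, the rest is routine bookkeeping via the framework of \S\ref{sec: explint} and the Auslander--Reiten-type symmetries underlying \cite[Corollary 9.3 and Lemma 9.5]{2103.12027}.
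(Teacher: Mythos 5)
Your proposal diverges from the paper's proof in two substantive ways, and at least one of them leaves a genuine gap.

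First, the assignment of roles. You set $C_1=\prm_Q(\Delta_j)$ (a single segment, in the quotient slot of the extension) and $C_2=\prm_Q(\m-\Delta_j)$. The paper does the opposite: after a WLOG reindexing so that $\Delta_i\not\prec\Delta_k$ and $\Delta_i\not\prec\rshft\Delta_k$ for all $i<k$ (for instance by choosing $\Delta_k$ minimizing $b(\Delta)+e(\Delta)$), it takes $C_1=\prm_Q(\Delta_1+\dots+\Delta_{k-1})$ and $C_2=\prm_Q(\Delta_k)$. Since the required vanishing $\ext^1_\Pi(C,C_1)=0$ is not symmetric in $C_1,C_2$, swapping the roles changes what has to be proved: you would need $\ext^1_\Pi(C,\prm_Q(\Delta_j))=0$, whereas the paper proves $\ext^1_\Pi(C,\prm_Q(\m'))=0$. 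Nothing in your argument verifies your version of the inequality, and it is not an automatic dual of the paper's (the dual trick $\vee$ also reverses the order of $*$, so it still lands the extremal segment in the $C_2$ slot). This mismatch is not a cosmetic relabeling; it is the crux of the claim.

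Second, the tool. The paper reduces everything to a single citation of \cite[Lemma 9.5]{2103.12027}, which packages (for a rigid $C$ with a prescribed quiver-level splitting $M_Q(\m)=M_Q(\m')\oplus M_Q(\m'')$ satisfying the $\not\prec$ and $\not\prec\rshft{}$ conditions) precisely the three conclusions: $C=C_1*C_2$, rigidity of $C_1$ and $C_2$, and the $\ext^1$ vanishing. You instead invoke \cite[Corollary 9.3]{2103.12027} and try to rerun the triangularization argument that the paper uses in \S\ref{sec: lad} to show quasi-laminae are rigid. That argument leans on the ladder ordering $a_1<\dots<a_r$, $b_1<\dots<b_r$, which lets one read off an upper-triangular shape for $\linT_{x_1;x}$ with free coordinates on the diagonal. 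A general rigid $C$ with distinct segments has no such ordering, and there is no reason the matrix for $\linT_{M_1;M}$ triangularizes in the way you describe; as you yourself note, this is exactly the step you cannot justify. It is not ``routine bookkeeping'' once done — it is the whole content. Furthermore, Corollary 9.3 is the criterion for rigidity of $C$ (a hypothesis here, not a conclusion), not a mechanism for producing the factorization $C=C_1*C_2$ together with the $\ext^1$ vanishing; the ``kernel forced into $C_2$ by a dimension count'' step in particular does not follow from it.

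So the proposal is not a valid proof: it proves (at best) a differently-aliased statement, and its central combinatorial step is unestablished and unlikely to hold at the stated level of generality. The fix is to keep the segment in the $C_2$ slot, choose it by the extremal condition that forces $\Delta_i\not\prec\Delta_k$ and $\Delta_i\not\prec\rshft\Delta_k$ for all $i\ne k$, and cite \cite[Lemma 9.5]{2103.12027} rather than attempting a from-scratch $\linT$ computation.
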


\begin{proof}
Indeed, without loss of generality we may assume that $\Delta_i\not\prec\Delta_k$ and $\Delta_i\not\prec\rshft\Delta_k$
for all $i<k$. (For instance, this holds if $b(\Delta_k)+e(\Delta_k)\le b(\Delta_i)+e(\Delta_i)$ for all $i<k$.)
Let $x\in C$ be rigid. Then, $\pi_Q(x)\simeq M_Q(\m)$. Write $\m=\m'+\m''$ where $\m'=\Delta_1+\dots+\Delta_{k-1}$
and $\m''=\Delta_k$. Then, the conditions of all parts of \cite[Lemma 9.5]{2103.12027} are satisfied
for the decomposition $M_Q(\m)=M_Q(\m')\oplus M_Q(\m'')$. Taking $C_1=\prm_Q(\m')$ and $C_2=\prm_Q(\m'')$, the result follows.
\end{proof}

\section{Relation to representation theory of \texorpdfstring{$\GL$}{GL}} \label{sec: GLn}

From now on, let $F$ be a local non-archimedean field with normalized absolute value $\abs{\cdot}$.
Let
\[
\Reps=\oplus_{m\ge0}\Reps(\GL_m(F))
\]
be the category of complex, smooth representations of finite length of $\GL_m(F)$, $m\ge0$.
This is a ring category with tensor functor given by (normalized) parabolic induction $\pi_1\times\pi_2$
with respect to the parabolic subgroup of block upper triangular matrices.
The unit object $\one$ is the one-dimensional representation of $\GL_0(F)=1$.

By Zelevinsky's classification \cite{MR584084}, there is a bijection
\[
\m\rightarrow Z(\m)
\]
between $\Mult^n$ and the set $\Irr=\Irr^n$ of irreducible subquotients (up to isomorphism) of
\[
\abs{\cdot}^{a_1}\times\dots\times\abs{\cdot}^{a_k}
\]
(a representation of $\GL_k(F)$) where $a_1,\dots,a_k$ range over all finite sequences of integers between $1$ and $n$.
Under this bijection, for any $\bfd\in\N^n$, $\Mult(\bfd)$ corresponds to the set $\Irr(\bfd)$ of irreducible subquotients of
\[
\overbrace{\abs{\cdot}\times\dots\times\abs{\cdot}}^{\bfd(1)}\times\dots\times
\overbrace{\abs{\cdot}^n\times\dots\times\abs{\cdot}^n}^{\bfd(n)}.
\]

More precisely, given a segment $\Delta=[a,b]$ define
\[
Z(\Delta)=\soc(\abs{\cdot}^a\times\dots\times\abs{\cdot}^b),\ L(\Delta)=\soc(\abs{\cdot}^b\times\dots\times\abs{\cdot}^a).
\]
Thus, $Z(\Delta)$ is the one-dimensional character $\abs{\det\cdot}^{(a+b)/2}$ of $\GL_{b-a+1}(F)$
and $L(\Delta)$ is the twist of the Steinberg representation of $\GL_{b-a+1}(F)$ by $\abs{\det\cdot}^{(a+b)/2}$.
Given a multisegment $\m$ we write it as $\m=\Delta_1+\dots+\Delta_r$ in such a way that $\Delta_i\not\prec\Delta_j$ for all $i<j$.
Then, up to isomorphism, the standard module representation
\begin{equation} \label{eq: std}
\std(\m)=Z(\Delta_1)\times\dots\times Z(\Delta_r)
\end{equation}
depends only on $\m$ and not on the choice of permissible ordering of the $\Delta_i$'s and
\[
Z(\m):=\soc(\std(\m)).
\]
We also have a bijection
\[
\m\mapsto L(\m):=\soc(L(\Delta_r)\times\dots\times L(\Delta_1))
\]
where $\m=\Delta_1+\dots+\Delta_r$, ordered as above.

Let $\BZ$ be the Bernstein--Zelevinsky Hopf algebra over $\Z$, namely, the Grothendieck group of $\Reps$ with
product and coproduct induced from the tensor functor (i.e., parabolic induction) and Jacquet functor, respectively.
Then, $\BZ$ is a (commutative) polynomial ring over $\Z$ freely generated by the so-called segment representations.
The latter are obtained from Zelevinsky's more general notion of segments \cite[\S3]{MR584084}.
If we limit ourselves to our restricted notion of segments (indexed by pairs of integers $i\le j$ in $\{1,\dots,n\}$), then
we get a Hopf subalgebra $\BZ^n$ (a polynomial ring over $\Z$ freely generated by $Z([i,j])$, $1\le i\le j\le n$).
The irreducible representations $Z(\m)$, $\m\in\Mult^n$ form a $\Z$-basis for $\BZ^n$.
We caution that $\BZ^n$ should not be confused with the graded part of $\BZ$ corresponding to the Grothendieck group of $\GL_n(F)$. If $\pi$ is an irreducible representation of $\GL_k(F)$ we will write $\deg\pi=k$.

Define a bijection
\[
\Irrcomp\rightarrow\Irr:\ \ \ C\mapsto\pi(C)
\]
by
\begin{equation} \label{eq: zmpi}
Z(\m)=\pi(\prm_Q(\m)),
\end{equation}
for every $\m\in\Mult$. By \eqref{eq: prm*} and \cite{MR863522} we also have
\[
L(\m)=\pi(\prm_{Q^{\op}}(\m))\ \ \forall\m\in\Mult.
\]
Clearly, $\pi(\Irrcomp(\bfd))=\Irr(\bfd)$ for every $\bfd\in\N^n$.

\begin{remark}
Although in this paper we will only use the bijection $C\mapsto\pi(C)$ as a bookkeeping device, it lies much deeper.
One can identify $\BZ^n\otimes\C$ with the Hopf algebra $\C[N]$ of regular functions on
the maximal unipotent subgroup $N$ of $\GL_{n+1}$ of upper unitriangular matrices.
Under this identification, the class of $Z([i,j])$ becomes the $(i,j)$-coordinate function.
On the other hand, $\C[N]$ is also the dual of the universal enveloping algebra $U(\operatorname{Lie} N)$ (a cocommutative Hopf algebra),
i.e., the positive part of $U(\mathfrak{sl}_{n+1})$.
In turn, $U(\operatorname{Lie} N)$ is endowed with Lusztig's canonical basis \cite{MR1035415}.
Under the identification above, the dual canonical basis in $\C[N]$ coincides with the classes of
irreducible representations in $\BZ^n$.
The bijection $C\mapsto\pi(C)$ realizes this fact, taking into account that the canonical basis is parameterized by the set
\[
\Irrcomp=\Irrcomp^n=\coprod_{\bfd\in\N^n}\Irrcomp(\bfd)
\]
(see \cite{MR1088333}) -- this parameterization is compatible with the $\N^n$-grading of $U(\operatorname{Lie} N)$.
See \cite{MR1985725, MaxChari2021} and the references therein for more details.
\end{remark}

For any irreducible component $C$ we have
\begin{equation} \label{eq: contrag}
\pi(C)^\vee\otimes\abs{\cdot}^{n+1}=\pi(C^\vee)
\end{equation}
where on the left-hand side, $^\vee$ denotes the contragredient and we twist by the character $\abs{\det}^{n+1}$.

We also remark that for any $\pi_1,\pi_2,\sigma\in\Irr$ we have (cf. \cite[p. 173]{MR863522})
\begin{equation} \label{eq: switch}
\sigma\hookrightarrow\pi_1\times\pi_2\iff\sigma^\vee\hookrightarrow\pi_2^\vee\times\pi_1^\vee\iff
\pi_2\times\pi_1\twoheadrightarrow\sigma.
\end{equation}

\begin{remark} \label{rem: simsub}
For any $\m_1,\m_2\in\Mult$, $Z(\m_1+\m_2)$ occurs in the Jordan--H\"older sequence of $Z(\m_1)\times Z(\m_2)$
(in fact, with multiplicity one).
In particular, if $Z(\m_1)\times Z(\m_2)$ is irreducible, then it is equal to $Z(\m_1+\m_2)$.
Thus, by \cite*[(9.5)]{2103.12027}, if $C_1,C_2\in\Irrcomp$ strongly commute and $\pi(C_1)\times\pi(C_2)$ is irreducible,
then it is equal to $\pi(C_1*C_2)=\pi(C_1\oplus C_2)$.
\end{remark}

\begin{example}
Going back to Example \ref{ex: nonrigid}, let $C=\prm_Q(\m)$, $C_1=\prm_Q(\m_1)$, $C_2=\prm_Q(\m_2)$ where
\[
\m=[4,5]+[2,4]+[3,3]+[1,2],
\]
\[
\m_1=[1,4]+[2,5],\ \ \m_2=[1,2]+[2,3]+[3,4]+[4,5].
\]
Then, by \cite{MR1959765},
\[
\pi(C)\times\pi(C)=\pi(C\oplus C)+\pi(C_1)\times\pi(C_2)=
\pi(C\oplus C)+\pi(C_1\oplus C_2).
\]
Note that $C\oplus C=\prm_Q(\m+\m)$ and $C_1\oplus C_2=\prm_Q(\m_1+\m_2)$.
\end{example}

\subsection*{Conjectures of Geiss and Schr\"oer} \label{sec: conjGS}

The following is for the most part, a special case of a conjecture of Geiss and Schr\"oer for type $A$.

\begin{subtheorem}{theorem}
\begin{conjecture}[cf. Conjecture 5.3 of \cite{MR2115084}] \label{conj: GS}
Let $C_i\in\Irrcomp(\bfd_i)$, $i=1,2$.
Then, $\pi(C_1)\times\pi(C_2)$ is irreducible if and only if
\begin{equation} \label{part: condirr}
\begin{split}
\text{there exist nonempty open subsets $U_i\subset C_i$, $i=1,2$}\\\text{ such that }
\Ext_\Pi^1(x_1,x_2)=0\text{ for all }x_1\in U_1, x_2\in U_2.
\end{split}
\end{equation}
In particular (by \cite[Corollary 3.3]{2103.12027}) if $\pi(C_1)\times\pi(C_2)$ is irreducible,
then $C_1$ and $C_2$ strongly commute (and hence $\pi(C_1)\times\pi(C_2)=\pi(C_1\oplus C_2)$ by Remark \ref{rem: simsub}).
\end{conjecture}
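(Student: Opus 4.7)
\textbf{Proof plan for Conjecture \ref{conj: GS}.} I would treat the two implications separately.

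\emph{Only if.} Suppose $\pi(C_1)\times\pi(C_2)$ is irreducible. By Remark \ref{rem: simsub} it must coincide with $Z(\m_1+\m_2) = \pi(\prm_Q(\m_1+\m_2))$. Let $C_1\oplus C_2 \subset \Lambda(V_1\oplus V_2)$ denote the closure of the direct-sum locus; this is an irreducible subvariety of $\prm_Q(\m_1+\m_2)$, and by \cite[Corollary 3.3]{2103.12027} the equality $C_1\oplus C_2 = \prm_Q(\m_1+\m_2)$ is precisely strong commutativity. My plan is the contrapositive: if strong commutativity fails, then a generic element of $\prm_Q(\m_1+\m_2)$ is a non-split $\Pi$-extension, and the orbits of generic non-split extensions yield additional components of $\Lambda(V_1\oplus V_2)$. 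I would then apply Lusztig's identification of $\BZ^n\otimes\C$ with $\C[N]$, under which the classes $[\pi(C)]$ are the dual canonical basis elements indexed by $C\in\Irrcomp$, and exploit the fact that each additional generic extension component forces the product $[\pi(C_1)]\cdot[\pi(C_2)]$ to acquire a further dual canonical basis term, producing an extra Jordan--H\"older constituent in the induction and contradicting irreducibility.

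\emph{If.} Assume condition \eqref{part: condirr}. Then by \cite[Corollary 3.3]{2103.12027}, $C_1\oplus C_2 = C_1*C_2 = C_2*C_1 = \prm_Q(\m_1+\m_2)$ is an irreducible component. The plan is to invoke Lusztig's geometric categorification of $\BZ^n\otimes\C \cong \C[N]$ in which parabolic induction corresponds to Lusztig's convolution of equivariant perverse sheaves. Generic $\Ext^1$-vanishing should translate into a transversality/semismallness statement for the relevant convolution morphism, making it birational onto its image over the open stratum of $C_1\oplus C_2$. A decomposition-theorem analysis should then force the only simple perverse summand to be the IC sheaf indexed by $\prm_Q(\m_1+\m_2)$, whence $[\pi(C_1)]\cdot[\pi(C_2)] = [Z(\m_1+\m_2)]$ in $\BZ$, and the induction is irreducible.

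\emph{Main obstacle.} The ``if'' direction is the deep one and is where the conjecture has resisted proof. Even granting \eqref{part: condirr}, the convolution map need not be globally semismall, and spurious simple perverse summands supported on smaller strata of $\Lambda(V_1\oplus V_2)$ must be excluded. This obstruction is substantially weakened under a rigidity hypothesis: for $C_1$ or $C_2$ rigid, strong commutativity coincides with the a priori weaker condition $C_1*C_2 = C_2*C_1$ by \cite[Proposition 6.1]{2103.12027}, and one gains much tighter combinatorial control (via the matching criterion of Remark \ref{rem: matching} when a quasi-lamina is involved). This is precisely why our main evidence, Theorem \ref{thm: cor}, is established in the rigid regular setting rather than attempting the conjecture in full generality.
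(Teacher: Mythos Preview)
The statement is a \emph{conjecture}, and the paper does not prove it; on the contrary, the text immediately following Conjecture \ref{conj: GSR} says explicitly that Conjectures \ref{conj: GS2} and \ref{conj: GSR}, let alone Conjecture \ref{conj: GS}, ``are wide open in general, even in one direction.'' So there is no proof in the paper to compare against, and what you have written is (as you label it) a plan, not a proof.

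That said, your plan for the ``only if'' direction contains a genuine logical gap. The contrapositive of ``$\pi(C_1)\times\pi(C_2)$ irreducible $\implies$ \eqref{part: condirr}'' is ``not \eqref{part: condirr} $\implies$ reducible''. You argue instead from the failure of \emph{strong commutativity}, which is a strictly weaker hypothesis than the failure of \eqref{part: condirr}: the paper points out (just after Conjecture \ref{conj: GS}) that strong commutativity of $C_1$ and $C_2$ does not imply \eqref{part: condirr}, even for $C_1=C_2$, citing Example \ref{ex: nonrigid}. Thus your contrapositive argument, even if it succeeded, would leave untouched precisely the case where $C_1,C_2$ strongly commute but \eqref{part: condirr} fails. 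Moreover, the step ``each additional generic extension component forces $[\pi(C_1)]\cdot[\pi(C_2)]$ to acquire a further dual canonical basis term'' is not a known fact; it is essentially the weak form of Conjecture \ref{conj: subconj} (and is closely related to Conjecture \ref{conj: genprod}), which the paper also presents as open.

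Your assessment of the ``if'' direction is honest: the obstruction you name (excluding spurious simple perverse summands supported on smaller strata) is real and is part of why the conjecture remains open. But note that the paper offers no partial or conditional proof of Conjecture \ref{conj: GS} along the perverse-sheaf lines you sketch; the evidence it provides (Theorem \ref{thm: cor}) is for the related but distinct Conjecture \ref{conj: subconj}, and proceeds by representation-theoretic induction (\S\ref{sec: Basic}--\S\ref{sec: endofproof}) rather than by a decomposition-theorem argument.
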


In fact, the original formulation in [ibid.] (which is attributed in part to Marsh and Reineke)
is pertaining to the product of two elements in the dual canonical basis of the negative part of the quantized enveloping algebra of
the Lie algebra corresponding to a simply laced Dynkin diagram.
For type $A$, \cite[Conjecture 5.3]{MR2115084} amounts to the ``if'' part of Conjecture \ref{conj: GS} as well as a related,
though not equivalent, variant of the ``only if'' part.

In general, the strong commutativity of $C_1$ and $C_2$ does not imply \eqref{part: condirr}
even if $C_1=C_2$ (see Example \ref{ex: nonrigid}).
However, if $C_1$ or $C_2$ is rigid, then Conjecture \ref{conj: GS} specializes as follows.

\begin{conjecture} \label{conj: GS2}
Let $C_i\in\Irrcomp(\bfd_i)$, $i=1,2$. Suppose that $C_1$ or $C_2$ is rigid.
Then, $\pi(C_1)\times\pi(C_2)$ is irreducible if and only if $C_1$ and $C_2$ commute
(see \cite[Proposition 6.1]{2103.12027}).
\end{conjecture}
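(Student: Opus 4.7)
The plan is to deduce Conjecture \ref{conj: GS2} from the main Conjecture \ref{conj: subconj} combined with the theory of $\square$-irreducible representations recalled in \S\ref{sec: sqr}. Without loss of generality assume $C_1$ is rigid, so that $\pi(C_1)$ is $\square$-irreducible (conjecturally in general, and known in the regular case by \cite{MR3866895}; beyond the regular case this has to be taken as an additional input). Under this hypothesis, for any irreducible $\pi(C_2)$ both $\pi(C_1)\times\pi(C_2)$ and $\pi(C_2)\times\pi(C_1)$ have simple socle and simple cosocle appearing with multiplicity one, and by the Kang--Kashiwara--Kim--Oh criterion the product is irreducible precisely when its socle equals its cosocle.

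For the ``only if'' direction, suppose $\pi(C_1)\times\pi(C_2)$ is irreducible. Conjecture \ref{conj: subconj} provides an embedding $\pi(C_1*C_2)\hookrightarrow\pi(C_1)\times\pi(C_2)$, forcing $\pi(C_1)\times\pi(C_2)=\pi(C_1*C_2)$. Since $\pi(C_1)\times\pi(C_2)$ and $\pi(C_2)\times\pi(C_1)$ share the same Jordan--H\"older content, the swapped product is also irreducible, and applying Conjecture \ref{conj: subconj} with the roles of $C_1$ and $C_2$ exchanged (still permissible since $C_1$ is rigid) yields $\pi(C_2*C_1)=\pi(C_2)\times\pi(C_1)=\pi(C_1*C_2)$. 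Injectivity of $C\mapsto\pi(C)$ then gives $C_1*C_2=C_2*C_1$, which in the rigid case is commutativity by \cite[Proposition 6.1]{2103.12027}.

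For the ``if'' direction, assume $C_1*C_2=C_2*C_1$. Using \eqref{eq: switch} to identify the cosocle of $\pi(C_1)\times\pi(C_2)$ with the socle of $\pi(C_2)\times\pi(C_1)$, the irreducibility criterion becomes $\soc(\pi(C_1)\times\pi(C_2))\cong\soc(\pi(C_2)\times\pi(C_1))$. Conjecture \ref{conj: subconj}, in its socle-identification form (cf.\ \S\ref{sec: mainconj}), identifies these two socles with $\pi(C_1*C_2)$ and $\pi(C_2*C_1)$ respectively (the containment plus simplicity of $\pi(C_1*C_2)$ plus multiplicity-one of the socle of the product pins down the socle uniquely). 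The hypothesis then forces the two socles to coincide, and hence $\pi(C_1)\times\pi(C_2)$ is irreducible.

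The main obstacle is Conjecture \ref{conj: subconj} itself: identifying $\pi(C_1*C_2)$ with the socle of $\pi(C_1)\times\pi(C_2)$ is precisely the geometric-to-representation-theoretic translation that drives the whole paper, and it is established here only when $C_1$ or $C_2$ is a single segment or (the substantive case in Theorem \ref{thm: cor}) a regular rigid component, via the Baumann--Kamnitzer--Tingley decomposition of \S\ref{sec: Basic} and the combinatorics of matchings from Remark \ref{rem: matching}. A secondary but genuine gap is that $\square$-irreducibility of $\pi(C)$ for rigid $C$ is itself only confirmed in the regular case, so any unconditional proof of Conjecture \ref{conj: GS2} outside that setting has to surmount both obstructions in tandem.
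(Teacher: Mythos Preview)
The statement is a \emph{conjecture}, and the paper does not prove it; it remains open (see the paragraph following Conjecture~\ref{conj: GSR}). What the paper does provide is Lemma~\ref{lem: conjcons}, which shows that Conjecture~\ref{conj: subconj} (together with the implication rigid $\Rightarrow$ $\square$-irreducible from Conjecture~\ref{conj: GSR}) yields Conjecture~\ref{conj: GS2}. Your proposal is precisely this same conditional reduction, and you correctly identify both inputs as open.

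Your argument is essentially the one in Lemma~\ref{lem: conjcons}, with one minor variation worth noting. For the ``only if'' direction you argue directly that irreducibility of the product forces $\pi(C_1*C_2)=\pi(C_2*C_1)$ via two applications of the embedding in Conjecture~\ref{conj: subconj}, hence $C_1*C_2=C_2*C_1$. The paper instead observes (Lemma~\ref{lem: conjcons}(3)) that if $C_1,C_2$ do not strongly commute then $\pi(C_1*C_2)$ and $Z(\m_1+\m_2)$ are \emph{distinct} subquotients of the product (using \cite[(9.5)]{2103.12027} and Remark~\ref{rem: simsub}), so the product is reducible; this uses only the weak (subquotient) form of Conjecture~\ref{conj: subconj}. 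Your route is slightly cleaner but needs the full embedding statement. For the ``if'' direction the two arguments coincide: both invoke Theorem~\ref{thm: kkko2}\ref{part: irrcit} and the socle identification from Conjecture~\ref{conj: subconj}, which in turn requires $\square$-irreducibility of $\pi(C_1)$ and hence Conjecture~\ref{conj: GSR}.

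In short: your reduction is correct and matches the paper's, but neither constitutes a proof of the conjecture.
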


Recall that in general, strong (as well as weak) commutativity can be checked efficiently by a randomized algorithm.

Some form of Conjecture \ref{conj: GS2} appears as Conjecture 1 in \cite{MR4169050} -- we were unaware of
\cite[Conjecture 5.3]{MR2115084} at the time.

Finally, we also single out the case $C_1=C_2$, in which Conjecture \ref{conj: GS} specializes as follows.
We say that $\pi$ is $\square$-irreducible if $\pi\times\pi$ is irreducible.
\begin{conjecture} \label{conj: GSR}
For any $C\in\Irrcomp(\bfd)$
\[
\pi(C)\text{ is $\square$-irreducible $\iff C$ is rigid.}
\]
\end{conjecture}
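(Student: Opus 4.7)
\emph{Strategy.} The plan is to attack the two directions of the conjecture separately, combining the main conjecture \ref{conj: subconj} with the $\square$-irreducibility framework of \S\ref{sec: sqr} and the properties of the operation $*$ established in \cite{2103.12027}.

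\emph{Direction $(\Leftarrow)$: $C$ rigid $\Rightarrow$ $\pi(C)$ is $\square$-irreducible.} Suppose $C$ is rigid. Since $C*C=C*C$ tautologically, \cite[Proposition 6.1]{2103.12027} gives strong self-commutativity of $C$; equivalently, $C\oplus C$ is an irreducible component coinciding with $C*C$. Applying Conjecture \ref{conj: subconj} to the pair $(C,C)$ then furnishes $\pi(C\oplus C)=\pi(C*C)$ as a subrepresentation of $\pi(C)\times\pi(C)$, occurring with multiplicity one in the Jordan--H\"older series. To upgrade this to full irreducibility, I would exploit the symmetry of the situation: by \eqref{eq: switch}, $\pi(C*C)$ sits simultaneously as the socle and the cosocle of $\pi(C)\times\pi(C)$. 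Combining this with the renormalized $R$-matrix machinery surveyed in \S\ref{sec: sqr} (cf.\ \cite{MR3314831, MR3758148}), rigidity of $C$ should provide a scalar endomorphism of $\pi(C)\times\pi(C)$ whose existence forces every Jordan--H\"older constituent to coincide with $\pi(C\oplus C)$.

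\emph{Direction $(\Rightarrow)$: $\pi(C)$ $\square$-irreducible $\Rightarrow$ $C$ is rigid.} Assume $\pi(C)\times\pi(C)$ is irreducible. By Remark \ref{rem: simsub} it coincides with $Z(\m+\m)=\pi(\prm_Q(\m+\m))$, where $C=\prm_Q(\m)$. Together with Conjecture \ref{conj: subconj} this forces $C*C=\prm_Q(\m+\m)$. The plan is then twofold. First, I would leverage the equality $C*C=\prm_Q(\m+\m)$ to show that $C\oplus C$ is itself an irreducible component, i.e.\ that $C$ strongly commutes with itself; this amounts to a dimension count inside $\Lambda(V\oplus V)$, using that the generic fiber of extensions over the split locus then accounts for the whole of $\Ext^1_\Pi$. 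Second, to bootstrap strong self-commutativity to rigidity, I would use Crawley--Boevey's formula expressing $\dim\Ext_\Pi^1(x,x)$ in terms of $\dim\End_\Pi(x)$ and the dimension vector, combined with the symmetry $\dim\Ext_\Pi^1(x_1,x_2)=\dim\Ext_\Pi^1(x_2,x_1)$ from the $2$-Calabi--Yau structure on $\Pi$, in order to compare the generic value of $\dim\Ext_\Pi^1(x,x)$ on the diagonal of $C\times C$ with its generic value on $C\times C$ itself.

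\emph{Main obstacle.} The $(\Rightarrow)$ direction is by far the harder one. Example \ref{ex: nonrigid} shows that a non-rigid $C$ may still satisfy $\{(x_1,x_2)\in C\times C : \Ext_\Pi^1(x_1,x_2)=0\}\ne\emptyset$, the trouble being that this open subset can systematically fail to intersect the diagonal of $C\times C$. The regular case of \cite{MR3866895}, reviewed in \S\ref{sec: regms} and extended by Theorem \ref{thm: cor}, sidesteps this gap via the combinatorial matching criterion of Remark \ref{rem: matching}, but no such criterion is known beyond the regular setting. I expect the crux of a general proof to be the identification of a geometric or representation-theoretic invariant of $C$ that precisely detects when the diagonal of $C\times C$ avoids the vanishing locus of $\Ext_\Pi^1$; producing such an invariant is, in my view, the principal open problem underlying the conjecture.
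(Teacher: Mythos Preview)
The statement you are addressing is Conjecture~\ref{conj: GSR}, which the paper explicitly leaves open: immediately after stating it the authors write that ``Conjectures \ref{conj: GS2} and \ref{conj: GSR} \dots\ are wide open in general, even in one direction.'' The only case settled in the paper is the regular one (Theorem~\ref{thm: mainLM}), and that proof proceeds via the combinatorial pattern-avoidance criterion rather than through Conjecture~\ref{conj: subconj}. So there is no ``paper's own proof'' to compare your proposal against; what you have written is a heuristic attack on an open problem, and it should be read as such.

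That said, your outline has genuine gaps even as a conditional argument assuming Conjecture~\ref{conj: subconj}. For $(\Leftarrow)$, you correctly obtain that $\pi(C\oplus C)=Z(\m+\m)$ is a multiplicity-one direct summand of $\pi(C)\times\pi(C)$ (this is exactly Lemma~\ref{lem: conjcons}(2)). But the step from ``direct summand'' to ``equals'' is not supplied: your appeal to the $R$-matrix being scalar is circular, since by Theorem~\ref{thm: kkko} that scalarity is \emph{equivalent} to $\square$-irreducibility, which is what you are trying to prove. Nothing in \S\ref{sec: sqr} lets you deduce scalarity of $R_{\pi;\pi}$ from rigidity of $C$ without already knowing $\square$-irreducibility. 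The paper itself only derives this implication from the much stronger Conjecture~\ref{conj: genprod} (see the remarks in \S\ref{sec: odds}), not from Conjecture~\ref{conj: subconj}.

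For $(\Rightarrow)$, your two-step plan cannot work as stated. Step one (showing $C$ strongly commutes with itself) is not the issue: in Example~\ref{ex: nonrigid} one already has $C\oplus C$ an irreducible component equal to $\prm_Q(\m+\m)$, yet $C$ is not rigid. So the entire burden falls on your step two, bootstrapping strong self-commutativity to rigidity via Crawley--Boevey's formula and the $2$-Calabi--Yau symmetry. But Example~\ref{ex: nonrigid} is precisely a counterexample to that bootstrap: the generic off-diagonal value of $\dim\Ext^1_\Pi$ on $C\times C$ is zero while the diagonal value is $2$, and no amount of semicontinuity or dimension-vector arithmetic bridges that gap. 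You acknowledge this in your ``Main obstacle'' paragraph, which is the honest and correct assessment; but it means the preceding paragraph is not a viable plan, merely a restatement of the difficulty.
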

\end{subtheorem}

This conjecture was stated in \cite{MR3866895}, albeit with an inaccurate attribution to a later reference.

At any rate, as far as we know, Conjectures \ref{conj: GS2} and \ref{conj: GSR} (let alone Conjecture \ref{conj: GS}) are wide open in general, even in one direction.
We also remark that if neither $C_i$ is rigid and $C_1\ne C_2$, then it is unclear how to check condition \eqref{part: condirr} of Conjecture \ref{conj: GS} algorithmically.

\section{\texorpdfstring{$\square$}{Square}-irreducible representations} \label{sec: sqr}

For any two representations of finite length $\pi$ and $\sigma$ of $\GL$ we denote by
\[
\inter_{\pi;\sigma}:\pi\times\sigma\rightarrow\sigma\times\pi
\]
the normalized intertwining operator obtained by taking the leading term in the Laurent expansion
of the unnormalized intertwining operator at $s=0$. (See \cite{MR3866895} for more details.)

We recall the following results, which are adaptations of basic results of Kang, Kashiwara, Kim and Oh.

\begin{subtheorem}{theorem}
\begin{theorem}[\cite{MR3866895}, after \cite{MR3314831}] \label{thm: kkko}
The following conditions are equivalent for a representation $\pi$ of finite length of $\GL_r(F)$.
\begin{enumerate}
\item $\pi$ is $\square$-irreducible.
\item $\End_{\GL_{2r}(F)}(\pi\times\pi)=\C$.
\item $\inter_{\pi;\pi}$ is a scalar.
\end{enumerate}
\end{theorem}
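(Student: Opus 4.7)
The plan is to prove equivalence by the cycle $(1)\Rightarrow(2)\Rightarrow(3)\Rightarrow(1)$, following the paradigm developed by Kang--Kashiwara--Kim--Oh in the quantum group setting and adapted to $\GL_r(F)$ in \cite{MR3866895}.

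For $(1)\Rightarrow(2)$, I would invoke Schur's lemma: since $\pi\times\pi$ is admissible of finite length, its irreducibility forces $\End_{\GL_{2r}(F)}(\pi\times\pi)=\C$. The implication $(2)\Rightarrow(3)$ is a formality: by construction $\inter_{\pi;\pi}$ is a $\GL_{2r}(F)$-equivariant endomorphism of $\pi\times\pi$, so if the endomorphism algebra is one-dimensional then $\inter_{\pi;\pi}$ must be a scalar.

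The substantive direction is $(3)\Rightarrow(1)$. First I would note that $\inter_{\pi;\pi}$ is nonzero by construction (it is the leading Laurent coefficient of the unnormalized intertwiner), so a scalar intertwiner is necessarily a \emph{nonzero} scalar, hence an automorphism of $\pi\times\pi$. Next I would invoke the following structural property of the normalized intertwiner, which is the key input imported from \cite{MR3314831}: for irreducible $\pi,\sigma$, the image of $\inter_{\pi;\sigma}$ is irreducible, equal as a subquotient to both the cosocle of $\pi\times\sigma$ and the socle of $\sigma\times\pi$, and occurs with multiplicity one in the Jordan--H\"older sequence of each. Specializing to $\sigma=\pi$, the automorphism $\inter_{\pi;\pi}$ has image the whole of $\pi\times\pi$, so $\pi\times\pi$ itself coincides with this irreducible common subquotient, and is in particular irreducible.

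The main obstacle is the structural property of the image of $\inter_{\pi;\sigma}$ invoked in the preceding paragraph; this is the genuinely non-trivial content behind the theorem. Establishing it requires a careful analysis of the interaction between the normalized intertwiner and Jacquet modules, via the Bernstein--Zelevinsky geometric lemma, in order to verify both the irreducibility of the image and its uniqueness (and multiplicity-one property) as a common subquotient of $\pi\times\sigma$ and $\sigma\times\pi$. Everything else in the proof reduces to Schur's lemma and the definition of the normalized intertwiner.
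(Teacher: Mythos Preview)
The paper does not itself prove this theorem; it is quoted from \cite{MR3866895}. So there is no in-paper proof to compare against, but your argument has a genuine gap in the implication $(3)\Rightarrow(1)$.

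The ``structural property'' you invoke --- that for \emph{arbitrary} irreducible $\pi,\sigma$ the image of $\inter_{\pi;\sigma}$ is irreducible and coincides with $\soc(\sigma\times\pi)$ --- is not true in general. It is precisely the content of Theorem~\ref{thm: kkko2}, which carries the hypothesis that one of $\pi,\sigma$ be $\square$-irreducible. Invoking it to establish $(3)\Rightarrow(1)$ is therefore circular: you are assuming (a consequence of) $\square$-irreducibility of $\pi$ in order to deduce $\square$-irreducibility of $\pi$. Without that hypothesis, $\pi\times\pi$ may well have non-simple socle, and there is no a priori reason for the image of $\inter_{\pi;\pi}$ to be irreducible.

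The actual argument in \cite{MR3866895} (following \cite{MR3314831}) proceeds differently. Assuming $\inter_{\pi;\pi}$ is a nonzero scalar, one fixes an irreducible subrepresentation $\tau\hookrightarrow\pi\times\pi$ and works inside the triple product $\pi\times\pi\times\pi$, using the factorization of $\inter_{\pi;\pi\times\pi}$ as $(\id_\pi\times\inter_{\pi;\pi})\circ(\inter_{\pi;\pi}\times\id_\pi)$ together with the functoriality of $\inter$ with respect to sub- and quotient representations (cf.\ the manipulations in the proofs of Theorem~\ref{thm: cancel} and Lemma~\ref{lem: sqr123}) to force $\tau=\pi\times\pi$. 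The hypothesis ``$\inter_{\pi;\pi}$ is scalar'' is exploited directly in this triple-product computation, not via a blanket statement about images of $\inter_{\pi;\sigma}$ for general $\sigma$.
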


Let us say that a representation $\tau$ of finite length is \SI{} if its socle is irreducible and occurs with multiplicity
one in the Jordan--H\"older sequence of $\tau$.

\begin{theorem}[\cite{MR3866895}, after \cite{MR3314831}] \label{thm: kkko2}
Let $\pi_1$ and $\pi_2$ be two irreducible representations of $\GL$.
Suppose that at least one of them is $\square$-irreducible.
Then,
\begin{enumerate}
\item $\pi_1\times\pi_2$ and $\pi_2\times\pi_1$ are \SI.
\item $\soc(\pi_1\times\pi_2)$ is the image of the normalized intertwining operator $\pi_2\times\pi_1\rightarrow\pi_1\times\pi_2$.
\item \label{part: irrcit} $\pi_1\times\pi_2$ is irreducible if and only if $\soc(\pi_1\times\pi_2)\simeq\soc(\pi_2\times\pi_1)$.
\item If $\pi_1\times\pi_2$ is irreducible and both $\pi_1$ and $\pi_2$ are $\square$-irreducible, then $\pi_1\times\pi_2$ is $\square$-irreducible.
\end{enumerate}
\end{theorem}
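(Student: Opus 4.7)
The strategy is to adapt to the $\GL_n(F)$ setting the arguments of Kang--Kashiwara--Kim--Oh \cite{MR3314831}, as carried out in \cite{MR3866895}. By symmetry I may assume $\pi_1$ is $\square$-irreducible, so that $\inter_{\pi_1;\pi_1}=c\cdot\id$ for some nonzero scalar $c$ by Theorem \ref{thm: kkko}. The engine driving everything is the multiplicativity of normalized intertwining operators: for irreducibles $\tau_1,\tau_2$ and any finite length $\sigma$, one has (up to the relevant normalization)
\[
\inter_{\sigma;\tau_1\times\tau_2}=(\inter_{\sigma;\tau_1}\times\id_{\tau_2})\circ(\id_{\tau_1}\times\inter_{\sigma;\tau_2}),
\]
together with the analogous identity for $\inter_{\tau_1\times\tau_2;\sigma}$.

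For parts (1) and (2), my plan is to show that $I:=\operatorname{Im}(\inter_{\pi_2;\pi_1})\subset\pi_1\times\pi_2$ lies inside every nonzero submodule. Applying the multiplicativity identity with $\tau_1=\tau_2=\pi_1$ and inserting $\inter_{\pi_1;\pi_1}=c\cdot\id$ in the middle, I would chase the diagram
\[
\pi_1\times\pi_2\times\pi_1\longrightarrow\pi_1\times\pi_1\times\pi_2\longrightarrow\pi_2\times\pi_1\times\pi_1
\]
in two different factorizations to force any nonzero $W\subset\pi_1\times\pi_2$ to contain $I$. Applied to an arbitrary simple submodule this simultaneously yields that $\soc(\pi_1\times\pi_2)$ is simple and equals $I$, proving (2). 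Multiplicity one in the Jordan--H\"older sequence then reduces to showing $\End_{\GL}(\pi_1\times\pi_2)=\C$, which follows by the standard KKKO trick: any endomorphism preserves the unique simple submodule $I$ and, composed with $\inter_{\pi_1;\pi_1}\times\id$ (a scalar), must itself be a scalar. The analogous statement for $\pi_2\times\pi_1$ is obtained either by running the same diagram chase with the roles swapped, or by applying \eqref{eq: switch} after noting that the contragredient of a $\square$-irreducible is $\square$-irreducible.

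Part (3) is essentially formal from (1) and (2): $\pi_1\times\pi_2$ is irreducible iff its socle is the whole module, iff $\inter_{\pi_2;\pi_1}$ is surjective, and likewise for $\pi_2\times\pi_1$ and $\inter_{\pi_1;\pi_2}$. Since $\inter_{\pi_2;\pi_1}$ factors through the cosocle of its source into the socle of its target, it is an isomorphism precisely when these irreducible subquotients are isomorphic, which by \eqref{eq: switch} is the same as $\soc(\pi_1\times\pi_2)\cong\soc(\pi_2\times\pi_1)$. For (4), assume both $\pi_i$ are $\square$-irreducible and $\pi_1\times\pi_2$ is irreducible, so that in particular $\inter_{\pi_1;\pi_2}$ and $\inter_{\pi_2;\pi_1}$ are isomorphisms between (abstractly isomorphic) irreducibles. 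Expanding $\inter_{\pi_1\times\pi_2;\pi_1\times\pi_2}$ via repeated multiplicativity as a composition of operators of the form $\id\times\inter_{\pi_i;\pi_j}\times\id$, each factor is either a scalar (when $i=j$, by hypothesis) or a scalar multiple of a fixed isomorphism $\pi_1\times\pi_2\cong\pi_2\times\pi_1$ (by Schur). Hence the whole composite is a scalar, and Theorem \ref{thm: kkko}(3) gives $\square$-irreducibility. The main obstacle is the first diagram chase showing $I$ lies in every nonzero submodule; this is the one place where the $\square$-irreducibility of $\pi_1$ is really used, and everything else reduces to formal manipulations with the R-matrix formalism.
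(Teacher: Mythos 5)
The paper treats parts (1), (2), (4) as cited results from \cite{MR3866895} (which in turn adapts \cite{MR3314831}) and only supplies an argument for part (3); you attempt to reprove the whole theorem, which is a different and more ambitious route, but it contains a genuine gap. The problem is your claim that the multiplicity-one assertion in (1) ``reduces to showing $\End_{\GL}(\pi_1\times\pi_2)=\C$''. This reduction is false: a finite-length module with simple socle $\sigma$ and scalar endomorphism ring can perfectly well have $\sigma$ appearing more than once in its Jordan--H\"older sequence (e.g.\ any uniserial module with repeated composition factor $\sigma$ and trivial automorphism group). The actual proof of multiplicity one in \cite{MR3314831} (and its adaptation in \cite{MR3866895}) is not a formal consequence of $\End=\C$; it relies on the deformation/spectral-parameter structure of the $R$-matrix (holomorphicity of the unnormalized intertwining operator in $s$, genericity of irreducibility, and a leading-term/degree argument). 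This is precisely the non-formal input that your proposal elides, and it is also the point where the $\square$-irreducibility of $\pi_1$ enters beyond the mere scalar-ness of $\inter_{\pi_1;\pi_1}$.

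Your derivation of part (3) is also not quite right as stated. You argue that $\inter_{\pi_2;\pi_1}$ ``factors through the cosocle of its source into the socle of its target'' and ``is an isomorphism precisely when these irreducible subquotients are isomorphic''. But by (2), the image of $\inter_{\pi_2;\pi_1}$ is always simultaneously isomorphic to $\cos(\pi_2\times\pi_1)$ and to $\soc(\pi_1\times\pi_2)$, so that isomorphism holds unconditionally and cannot characterize irreducibility. The paper's argument is different and does genuinely use multiplicity one: if $\sigma:=\soc(\pi_1\times\pi_2)\simeq\soc(\pi_2\times\pi_1)$, then by \eqref{eq: switch}, $\sigma$ is both a subrepresentation and a quotient of $\tau:=\pi_1\times\pi_2$; since $\sigma$ appears with multiplicity one in the Jordan--H\"older sequence of $\tau$, it splits off as a direct summand, and since it is also the socle of $\tau$, we get $\tau=\sigma$. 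You should replace your part (3) argument with this one, and either cite (1), (2), (4) outright or import the full spectral-parameter argument for multiplicity one.
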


Note that part \ref{part: irrcit} was not stated explicitly in \cite{MR3866895} but it follows easily for the other parts.
Indeed, if $\sigma:=\soc(\pi_1\times\pi_2)\simeq\soc(\pi_2\times\pi_1)$, then by \eqref{eq: switch} $\sigma$ occurs both as a subrepresentation
and as a quotient of $\tau:=\pi_1\times\pi_2$. Since the multiplicity of $\sigma$ in the Jordan--H\"older sequence of $\tau$
is one, this means that $\sigma$ is a direct summand of $\tau$. However, since $\sigma=\soc(\tau)$, this implies
that $\tau=\sigma$.

\begin{corollary} \label{cor: allprod}
Let $\pi_1,\dots,\pi_k$ be irreducible representations such that at most one of them is not $\square$-irreducible.
Assume that $\pi_i\times\pi_j$ is irreducible for all $i\ne j$. Then, $\pi_1\times\dots\times\pi_k$ is irreducible.
\end{corollary}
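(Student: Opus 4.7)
The plan is to argue by induction on $k$, strengthened so that when every $\pi_i$ is $\square$-irreducible one concludes not just irreducibility of $\pi_1\times\dots\times\pi_k$ but its $\square$-irreducibility. The case $k=2$ is exactly the hypothesis, together with Theorem \ref{thm: kkko2}\,(4) for the $\square$-irreducibility upgrade.

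For the inductive step $k\ge 3$, I would first use that each pairwise product $\pi_i\times\pi_j$, $i\ne j$, is irreducible (hence $\pi_i\times\pi_j\simeq\pi_j\times\pi_i$); since parabolic induction is a bi-exact bilinear functor, swapping two adjacent commuting factors inside a longer product yields an isomorphic representation. Consequently any single $\pi_{i_0}$ can be transported into the first slot up to isomorphism, so without loss of generality the (unique, if any) non-$\square$-irreducible representation is $\pi_1$, and $\pi_2,\dots,\pi_k$ are all $\square$-irreducible. The strengthened inductive hypothesis then gives that $\sigma:=\pi_2\times\dots\times\pi_k$ is $\square$-irreducible. Repeating the reordering, now to move $\pi_1$ past each $\pi_j$ ($j=2,\dots,k$), yields an isomorphism $\pi_1\times\sigma\simeq\sigma\times\pi_1$. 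Since $\sigma$ is $\square$-irreducible, Theorem \ref{thm: kkko2}\,(1) says both $\pi_1\times\sigma$ and $\sigma\times\pi_1$ are \SI{}; the established isomorphism forces their irreducible socles to coincide, and Theorem \ref{thm: kkko2}\,(3) then concludes that $\pi_1\times\sigma=\pi_1\times\dots\times\pi_k$ is irreducible. If moreover $\pi_1$ is $\square$-irreducible, then Theorem \ref{thm: kkko2}\,(4) applied to the pair $(\pi_1,\sigma)$ upgrades the product to a $\square$-irreducible representation, closing the strengthened induction.

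The main subtlety, and the reason for the self-strengthening, is that a naive induction on irreducibility alone stalls: once $\sigma=\pi_2\times\dots\times\pi_k$ has been shown irreducible, to apply Theorem \ref{thm: kkko2} to the pair $(\pi_1,\sigma)$ one needs at least one of $\pi_1,\sigma$ to be $\square$-irreducible, which is not automatic in the case where $\pi_1$ is itself the non-$\square$-irreducible factor. Tracking $\square$-irreducibility of the partial product $\sigma$ through the induction is precisely what fills this gap; everything else is a routine bookkeeping application of Theorem \ref{thm: kkko2} combined with the associativity and bi-exactness of parabolic induction.
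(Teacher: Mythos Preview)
Your proof is correct and follows the same inductive strategy the paper hints at; the paper only says ``by induction on $k$, by analyzing the normalized intertwining operator'' and cites \cite{MR4272560}, so your argument is a faithful elaboration. The self-strengthening (tracking $\square$-irreducibility of the partial product of the $\square$-irreducible factors via Theorem~\ref{thm: kkko2}(4)) is exactly the right device to make the induction close, and your use of parts (1) and (3) of Theorem~\ref{thm: kkko2} is what ``analyzing the normalized intertwining operator'' amounts to here.
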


This follows by induction on $k$, by analyzing the normalized intertwining operator.
See \cite{MR4272560} for a more general statement.

Denote by $\cos$ the cosocle of a representation.
\begin{theorem}[cf.\ Corollary 3.7 of \cite{MR3314831}] \label{thm: cancel}
Let $\pi$ be a $\square$-irreducible representation of $\GL$.
Then, the two functions
\begin{align*}
\sigma\mapsto\soc(\sigma\times\pi)=\cos(\pi\times\sigma)\\
\sigma\mapsto\soc(\pi\times\sigma)=\cos(\sigma\times\pi)
\end{align*}
are injective on the set of irreducible representations of $\GL$.
\end{theorem}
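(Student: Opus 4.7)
The two injectivity statements are exchanged by contragredient duality. Indeed, from Theorem~\ref{thm: kkko} one sees immediately that $\pi^\vee$ is again $\square$-irreducible (the condition $\End(\pi\times\pi)=\C$ passes to contragredients), so applying the first statement to $\pi^\vee$ and then dualizing via \eqref{eq: switch} yields the second. I therefore concentrate on the first one, together with the equality $\soc(\sigma\times\pi)=\cos(\pi\times\sigma)$. This equality is a formal consequence of \eqref{eq: switch} combined with Theorem~\ref{thm: kkko2}(1) applied to both pairs $(\sigma,\pi)$ and $(\pi^\vee,\sigma^\vee)$: the former gives that the socle of $\sigma\times\pi$ is irreducible and appears with multiplicity one, while the latter (passed through $^\vee$) gives the same for the cosocle; invoking \eqref{eq: switch} then identifies the two.

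Now suppose $\sigma_1,\sigma_2$ are irreducible with $S:=\soc(\sigma_1\times\pi)\cong\soc(\sigma_2\times\pi)$. Comparing cuspidal supports forces $\deg\sigma_1=\deg\sigma_2$, so both inclusions $S\hookrightarrow\sigma_i\times\pi$ land inside parabolic inductions from the same Levi. By Theorem~\ref{thm: kkko2}(2), for each $i$ the subspace $S$ is precisely the image of the normalized intertwining operator $\inter_{\pi;\sigma_i}:\pi\times\sigma_i\to\sigma_i\times\pi$, so one has the two diagrams
\[
\pi\times\sigma_i\twoheadrightarrow S\hookrightarrow\sigma_i\times\pi,\qquad i=1,2.
\]

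The heart of the argument is to cancel the common factor $\pi$ from the identification $\soc(\sigma_1\times\pi)=\soc(\sigma_2\times\pi)$, using crucially that $\inter_{\pi;\pi}$ is a (nonzero) scalar by Theorem~\ref{thm: kkko}(3). I would exploit this through the hexagonal (multiplicative) identities for normalized intertwining operators. More explicitly, I plan to consider $\inter_{\pi;\sigma_i\times\pi}=(\id_{\sigma_i}\otimes\inter_{\pi;\pi})\circ(\inter_{\pi;\sigma_i}\otimes\id_\pi)$ acting on $\pi\times\sigma_i\times\pi$; since $\inter_{\pi;\pi}$ is a nonzero scalar, tracking images produces a subrepresentation of $\sigma_i\times\pi\times\pi$ that on the one hand contains an embedded copy of $S\times\pi$ (and hence of $\sigma_i\times\pi$ as a JH-quotient with multiplicity one by Theorem~\ref{thm: kkko2}(1)), and on the other is uniquely determined by $S$. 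Comparing the two realizations for $i=1,2$ and using Frobenius reciprocity yields a nonzero homomorphism $\sigma_1\to\sigma_2$, which is an isomorphism by Schur's lemma.

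The main obstacle is exactly the final identification: without the scalarness of $\inter_{\pi;\pi}$ one would only obtain a map that might well be zero. Thus the $\square$-irreducibility of $\pi$ is used twice --- once, via Theorem~\ref{thm: kkko2}, to ensure the socle is well-defined and the intertwining operator has a tractable image, and a second, more subtle time, via Theorem~\ref{thm: kkko}(3), to guarantee that the cancellation of $\pi$ inside the larger induced representation $\sigma_i\times\pi\times\pi$ does not degenerate. The analogous injectivity for $\sigma\mapsto\soc(\pi\times\sigma)$ then follows either by the symmetric argument or directly from the first by applying $^\vee$ and \eqref{eq: switch}.
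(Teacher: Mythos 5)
Your opening reductions are fine and match the paper: the two statements are interchanged by contragredient duality, and the scalarness of $\inter_{\pi;\pi}$ (Theorem~\ref{thm: kkko}) is indeed the key lever. You also correctly identify the hexagonal factorization $\inter_{\pi;\sigma_i\times\pi}=(\id_{\sigma_i}\times\inter_{\pi;\pi})\circ(\inter_{\pi;\sigma_i}\times\id_\pi)$ as the technical engine. However, the culminating step---``Comparing the two realizations for $i=1,2$ and using Frobenius reciprocity yields a nonzero homomorphism $\sigma_1\to\sigma_2$''---is not an argument but a hope, and as stated it does not go through. All that your discussion of images delivers is that $S\times\pi$ sits inside both $\sigma_1\times\pi\times\pi$ and $\sigma_2\times\pi\times\pi$. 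Frobenius reciprocity applied to $S\hookrightarrow\sigma_i\times\pi$ tells you only that $\sigma_1\otimes\pi$ and $\sigma_2\otimes\pi$ are both irreducible quotients of the relevant Jacquet module of $S$; but a priori that Jacquet module could have several non-isomorphic irreducible quotients of the right degree, so no map $\sigma_1\to\sigma_2$ is forced. The missing content is exactly the assertion that the cosocle of the $\pi$-coinvariants of the Jacquet module of $S$ is \emph{irreducible}, and that is where all the work lies.

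The paper's proof is structured precisely to close this gap. Write $\omega := J_\pi(\tau)$ for the $(\GL_r(F),\pi)$-coinvariants of the Jacquet module of $\tau:=S$ (the left adjoint of $-\times\pi$ applied to $\tau$). Using the hexagonal identity and the scalarness of $\inter_{\pi;\pi}$, together with the intermediate Corollary~2.2 and Lemma~2.3 from \cite{MR3866895}, one shows in two steps: (i) the image of $\inter_{\pi;\omega}$ is exactly the canonical copy $\iota(\tau)\subset\omega\times\pi$, so the induced map $\eta:\pi\times\omega\to\tau$ kills $\pi\times\kappa$ for every proper $\kappa\subsetneq\omega$; (ii) this forces $\omega$ to have a unique maximal proper submodule, i.e.\ $\cos(\omega)$ is irreducible. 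Only then can one conclude: if $\tau=\soc(\sigma\times\pi)$ then Frobenius reciprocity, $\Hom(\omega,\sigma)=\Hom(\tau,\sigma\times\pi)\ne 0$, identifies $\sigma=\cos(\omega)$, uniquely determined by $\tau$. Your proposal takes the right ingredients but applies the hexagonal trick to $\pi\times\sigma_i\times\pi$ instead of to $\pi\times\omega\times\pi$; since $\sigma_i$ is exactly what one wants to determine, working with $\sigma_i$ rather than with the Jacquet-theoretic object $\omega$ is circular, and the argument stalls before producing the irreducible-cosocle statement that the proof genuinely needs.
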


For convenience, we adapt the proof of Proposition 3.6 and Corollary 3.7 of \cite{MR3314831}
to the setting of representation theory of $\GL$.

\begin{proof}
We first note that it is enough to prove the injectivity of the first map, since the injectivity
of the second one would then follow by passing to the contragredient.

Let $r=\deg\pi$ and let $J_\pi$ be the left-adjoint of the functor $\sigma\mapsto\sigma\times\pi$.
Thus, for any representation $\tau$ of $\GL_s(F)$,
$J_\pi(\tau)$ is the $(\GL_r(F),\pi)$-coinvariants (in the second factor)
of the (normalized) Jacquet module $J(\tau)$ of $\tau$
with respect to the parabolic subgroup of type $(s-r,r)$ (interpreted as $0$ if $s<r$).

Suppose now that $\tau$ is irreducible and $\omega:=J_\pi(\tau)\ne0$. Let
\[
\iota:\tau\rightarrow\omega\times\pi
\]
be the canonical morphism corresponding to the identity morphism of $\omega$.
Since the latter does not factor through any proper submodule of $\omega$,
\begin{equation} \label{eq: notprop}
\iota(\tau)\text{ is not contained in $\kappa\times\pi$ for any proper subrepresentation }\kappa\subsetneq\omega.
\end{equation}
Moreover, since $\tau$ is irreducible, $\iota$ is a monomorphism.

Next, we claim that
\begin{equation} \label{eq: impiom}
\text{the image of the intertwining operator }
\inter_{\pi;\omega}:\pi\times\omega\rightarrow\omega\times\pi
\text{ is }\iota(\tau).
\end{equation}

Recall that the restriction of
\[
\inter_{\pi;\omega\times\pi}:\pi\times\omega\times\pi\rightarrow\omega\times\pi\times\pi
\]
to $\pi\times\tau$ (via $\iota$) is either $0$ or $\inter_{\pi;\tau}$, and in any case
its image is contained in $\tau\times\pi$ (see \cite[Lemma 2.3(3)]{MR3866895}). On the other hand,
\[
\inter_{\pi;\omega\times\pi}=(\id_\omega\times\inter_{\pi;\pi})\circ(\inter_{\pi;\omega}\times\id_{\pi})
\]
which by assumption, is a non-zero scalar multiple of $\inter_{\pi;\omega}\times\id_{\pi}$. Thus, we get a commutative diagram
\[
  \begin{tikzcd}
    \pi\times\tau \arrow{r} \arrow[hookrightarrow]{d}{\id_\pi\times\iota} &
    \tau\times\pi \arrow[hookrightarrow]{d}{\iota\times\id_\pi} \\
    \pi\times\omega\times\pi \arrow{r}{\inter_{\pi;\omega}\times\id_{\pi}} & \omega\times\pi\times\pi
  \end{tikzcd}
\]
This means that
\[
\pi\times\iota(\tau)\subset\inter_{\pi;\omega}^{-1}(\iota(\tau))\times\pi.
\]
Hence, by \cite[Corollary 2.2]{MR3866895}, there exists a subrepresentation $\kappa$ of $\omega$ such that
$\inter_{\pi;\omega}(\pi\times\kappa)\subset\iota(\tau)$ and $\iota(\tau)\subset\kappa\times\pi$.
By \eqref{eq: notprop}, $\kappa=\omega$, which implies \eqref{eq: impiom}.

It follows that we can factorize $R_{\pi;\omega}$ as $\iota\circ\eta$ where
\[
\eta:\pi\times\omega\rightarrow\tau.
\]
We claim that for any proper subrepresentation $\kappa$ of $\omega$ we have $\eta\rest_{\pi\times\kappa}\equiv0$.
Indeed, otherwise, the restriction of $R_{\pi;\omega}$ to $\pi\times\kappa$ would be non-zero, and hence equal to $R_{\pi;\kappa}$.
This would contradict \eqref{eq: notprop} and \eqref{eq: impiom}.

We infer that $\omega$ has a unique maximal (proper) subrepresentation, i.e., $\cos(\omega)$ is irreducible.

Finally, if $\sigma$ is an irreducible representation of $\GL$ and $\tau=\soc(\sigma\times\pi)$, then the relation
\[
\Hom(\omega,\sigma)=\Hom(\tau,\sigma\times\pi)
\]
implies that $\sigma=\cos(\omega)$. In particular, $\sigma$ is determined by $\tau$.
\end{proof}

\end{subtheorem}

\section{A new conjecture} \label{sec: mainconj}

We make the following conjecture which is inspired by \cite[Conjecture 5.3]{MR2115084}.

\begin{conjecture} \label{conj: subconj}
$\pi(C_1*C_2)$ is a subrepresentation of $\pi(C_1)\times\pi(C_2)$.
In particular, if $\pi(C_1)$ or $\pi(C_2)$ is $\square$-irreducible, then $\pi(C_1*C_2)=\soc(\pi(C_1)\times\pi(C_2))$.
\end{conjecture}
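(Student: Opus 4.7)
The second assertion is an immediate consequence of the first. Indeed, if $\pi(C_1)$ or $\pi(C_2)$ is $\square$-irreducible, Theorem~\ref{thm: kkko2} guarantees that $\pi(C_1)\times\pi(C_2)$ is \SI, so its unique irreducible subrepresentation must coincide with the socle. The problem therefore reduces to producing an embedding $\pi(C_1*C_2)\hookrightarrow\pi(C_1)\times\pi(C_2)$ in general. My plan is to separate this into two sub-tasks: first, show that $\pi(C_1*C_2)$ appears in the Jordan--H\"older sequence of $\pi(C_1)\times\pi(C_2)$; second, promote that appearance to a genuine subrepresentation.

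For the first sub-task, I would induct on the geometric invariant $\ext^1_\Pi(C_1,C_2)$. When this vanishes, $C_1$ and $C_2$ strongly commute, $C_1*C_2$ equals their direct sum $C_1\oplus C_2$, and $\pi(C_1*C_2)=Z(\m_1+\m_2)$ appears with multiplicity one in $\pi(C_1)\times\pi(C_2)$ by Remark~\ref{rem: simsub}. For the inductive step, the definition of $*$ realizes a generic point of $C_1*C_2$ as an extension of $x_1\in C_1$ by $x_2\in C_2$; passing through Lusztig's realization of the canonical basis as simple perverse sheaves on $\Lambda(V)$, one would like this extension to translate into a relation between the classes $[\pi(C_1)][\pi(C_2)]$ and $[\pi(C_1*C_2)]$ in $\BZ^n$. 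For the second sub-task, the natural candidate for the embedding is supplied by the normalized intertwining operator, in the spirit of the proof of Theorem~\ref{thm: cancel}: one would analyze $\inter_{\pi(C_2);\pi(C_1)}:\pi(C_2)\times\pi(C_1)\to\pi(C_1)\times\pi(C_2)$ and attempt to identify $\pi(C_1*C_2)$ within its image. In the regular--rigid case, the Baumann--Kamnitzer--Tingley matching of \cite{MR3270589} provides precisely such a bridge, which is the engine of Theorem~\ref{thm: cor}; an attractive intermediate goal is to extend this analysis to arbitrary rigid components.

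I expect the decisive difficulty to lie in bridging the geometric and representation-theoretic sides. The operation $*$ is defined by generic extensions on $\Lambda(V)$, a geometric condition decodable from the coordinates of \S\ref{sec: explint}, while membership in the socle of $\pi(C_1)\times\pi(C_2)$ is a representation-theoretic statement involving Jacquet-module data and the \SI-type analysis of \S\ref{sec: sqr}. Outside the regular range, no combinatorial description of either side is known, and even the multisegment $\m_1*\m_2$ itself is currently accessible only through the randomized algorithm of \S\ref{sec: compute}. Accordingly, the main obstacle is to find a ``categorification'' of the defining extension of $C_1*C_2$---for instance through quiver Hecke algebras or an affine Grassmannian construction---that produces the required homomorphism directly, rather than merely confirming its existence numerically.
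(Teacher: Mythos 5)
The statement you are addressing is a \emph{conjecture}, not a theorem: the paper leaves it open and proves it only in special cases. There is therefore no paper proof to compare against, and what you have written is a strategy sketch rather than a proof; the question is how much of the ground it actually covers. Your derivation of the ``in particular'' clause from the first assertion is correct and is exactly the intended reading: by Theorem~\ref{thm: kkko2}, when $\pi(C_1)$ or $\pi(C_2)$ is $\square$-irreducible the product $\pi(C_1)\times\pi(C_2)$ is \SI, so any irreducible subrepresentation is the socle. Your ``base case'' ($\ext^1_\Pi(C_1,C_2)=0$) is essentially Lemma~\ref{lem: notprec1} combined with Remark~\ref{rem: simsub} and is fine. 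The genuine gap is that the proposed induction on $\ext^1_\Pi(C_1,C_2)$ has no inductive step: you never produce, from a pair $(C_1,C_2)$ with $\ext^1_\Pi(C_1,C_2)>0$, another pair of strictly smaller $\ext^1$ from whose conclusion the desired one would follow, and the appeal to Lusztig's perverse-sheaf realization (or to quiver Hecke algebras) is aspirational --- no mechanism is given that converts a generic extension of $x_1$ by $x_2$ in $\Lambda(V)$ into a homomorphism $\pi(C_1*C_2)\to\pi(C_1)\times\pi(C_2)$.

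For calibration, the route the paper actually takes in the cases it can handle (Theorem~\ref{thm: cor}) is structurally different. It first reduces, via Lemma~\ref{lem: reduct}, to the situation $\max D\le\max C$ (or $\min D\le\min C$) by peeling off from $D$ the segments ending at the top as a basic $\square$-irreducible factor and invoking associativity of $*$. It then inducts on the size of $\m$ using the torsion-pair ``$\sigma$-decomposition'' of \S\ref{sec: Basic} (a $\GL$-side analogue of the Baumann--Kamnitzer--Tingley decomposition) together with Proposition~\ref{prop: indelta}; the engine making this induction close is a combinatorial statement, Lemma~\ref{lem: combreg}, that is specific to balanced regular multisegments. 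This is precisely where the regular--rigid hypothesis enters, and --- as you rightly observe --- outside that range no comparable combinatorial control is currently available, which is why the conjecture remains open. So your instinct that the obstacle is a missing geometric-to-representation-theoretic ``bridge'' is correct, but the bridge the paper builds in the known range is the $\sigma$-decomposition and the reduction of Lemma~\ref{lem: reduct}, not an intertwining-operator or perverse-sheaf argument.
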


Roughly speaking, Conjecture \ref{conj: subconj} says that a generic extension of a generic $x_1\in C_1$ by a generic $x_2\in C_2$
determines an irreducible subrepresentation of $\pi(C_1)\times\pi(C_2)$.

A good example to bear in mind is the following.

\begin{example} \label{exam: twolads}
As in Example \ref{exam: twoladsc} let $C=\prm_Q(\m)$ be a lamina with
\[
\m=\sum_{i=1}^r[a_i,b_i]\text{ with }a_1<\dots<a_r\text{ and }b_1<\dots<b_r.
\]
For each $i$ let $t_i$ be such that $a_i-1\le t_i\le b_i$ and $t_1<\dots<t_r$.
Then, $C=C_1*C_2$ where
\[
C_1=\prm_Q(\sum_{i=1}^r[a_i,t_i])\text{ and }C_2=\prm_Q(\sum_{i=1}^r[t_i+1,b_i]).
\]
(As usual, we ignore any empty segments in the sums.)
By \cite{MR2996769}, we have $\pi(C)=\soc(\pi(C_1)\times\pi(C_2))$.
\end{example}

A weak form of Conjecture \ref{conj: subconj} is that $\pi(C_1*C_2)$ occurs as a subquotient
of $\pi(C_1)\times\pi(C_2)$.
In fact, this would follow from the first part of \cite[Conjecture 5.3]{MR2115084} (for type $A$)
together with \cite[Theorem 3.1]{2103.12027}.
However, even this weaker conjecture is open as far as we know
(even assuming that $C_1$ or $C_2$ is rigid).

The decomposition of $\pi(C)\times\pi(C)$ in the Grothendieck group was computed in \cite{1911.04270}
using computer calculations for $C$ of the form \eqref{eq: Cw} below, for $n\le 6$. In these cases,
\begin{enumerate}
\item The multiplicity of $\pi(C*C)$ in the Jordan--H\"older sequence of $\pi(C)\times\pi(C)$ is always one or two.
\item This multiplicity is one if and only if $C$ strongly commutes with itself if and only if $\pi(C)\times\pi(C)$ is of length at most two.
\end{enumerate}

Some consequences of Conjecture \ref{conj: subconj} and its interrelation with Conjectures \ref{conj: GS}--\ref{conj: GSR} are
stated in the following

\begin{lemma} \label{lem: conjcons}
Assume that Conjecture \ref{conj: subconj} holds.
Then, for any $\m,\n\in\Mult$
\begin{enumerate}
\item If $\prm_Q(\m+\n)=\prm_Q(\m)*\prm_Q(\n)$ (cf. \cite[(9.4)]{2103.12027}), then
$Z(\m+\n)\hookrightarrow Z(\m)\times Z(\n)$.
\item If $\prm_Q(\m)$ and $\prm_Q(\n)$ strongly commute, then
$Z(\m+\n)$ is a direct summand of $Z(\m)\times Z(\n)$. In particular, if $Z(\m)$ or $Z(\n)$ is $\square$-irreducible,
then $Z(\m)\times Z(\n)$ is irreducible.
\item If $\prm_Q(\m)$ and $\prm_Q(\n)$ do not strongly commute, then $Z(\m)\times Z(\n)$ is reducible.
\end{enumerate}
\end{lemma}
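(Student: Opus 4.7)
The plan is to derive all three parts from Conjecture \ref{conj: subconj}, using Remark \ref{rem: simsub}, the duality \eqref{eq: switch}, and the \SI{}-machinery of \S\ref{sec: sqr}. Throughout set $C_1=\prm_Q(\m)$ and $C_2=\prm_Q(\n)$. Part (1) is immediate: the hypothesis reads $\prm_Q(\m+\n)=C_1*C_2$, and Conjecture \ref{conj: subconj} together with \eqref{eq: zmpi} gives $Z(\m+\n)=\pi(C_1*C_2)\hookrightarrow Z(\m)\times Z(\n)$.

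For part (2), the first step will be to observe that strong commutativity forces $C_1*C_2=C_2*C_1=\prm_Q(\m+\n)$: by the characterization recalled in \S\ref{sec: basicPP} (from \cite[Corollary 3.3]{2103.12027}), $C_1*C_2$ equals the Zariski closure $C_1\oplus C_2$ of $\{x_1\oplus x_2:(x_1,x_2)\in C_1\times C_2\}$, which is an irreducible component of $\Lambda(V)$; since its generic $\pi_Q$-image lies in the orbit of $M_Q(\m+\n)$ and all irreducible components of $\Lambda(V)$ are equidimensional, this closure must equal $\prm_Q(\m+\n)$. Applying part (1) to the ordered pairs $(\m,\n)$ and $(\n,\m)$ in turn, and invoking \eqref{eq: switch} on the second embedding, yields both $Z(\m+\n)\hookrightarrow Z(\m)\times Z(\n)$ and a surjection $Z(\m)\times Z(\n)\twoheadrightarrow Z(\m+\n)$. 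Since $Z(\m+\n)$ occurs with multiplicity one in the Jordan--H\"older sequence of $Z(\m)\times Z(\n)$ by Remark \ref{rem: simsub}, appearing as both a subobject and a quotient forces it to split off as a direct summand. If moreover $Z(\m)$ or $Z(\n)$ is $\square$-irreducible, Theorem \ref{thm: kkko2}(1) guarantees that $Z(\m)\times Z(\n)$ is \SI{}, and any nonzero direct summand of an \SI{} representation must be the whole representation; hence $Z(\m)\times Z(\n)=Z(\m+\n)$ is irreducible.

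For part (3) I would argue by contrapositive. Suppose $Z(\m)\times Z(\n)$ is irreducible. By Remark \ref{rem: simsub} it equals $Z(\m+\n)$, so Conjecture \ref{conj: subconj} gives $\pi(C_1*C_2)=Z(\m+\n)$, i.e., $C_1*C_2=\prm_Q(\m+\n)$; the symmetric argument yields $C_2*C_1=\prm_Q(\m+\n)$ as well. The main obstacle is that this geometric identity does not by itself imply strong commutativity: for instance, taking $\m=[2,2]$ and $\n=[1,1]$ one has $C_1*C_2=\prm_Q(\m+\n)$ even though $\ext^1_\Pi(C_1,C_2)=1$, because the generic element of $\prm_Q(\m+\n)$ can fail to split as a $\Pi$-module even when the underlying $Q$-module does. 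To close the gap I would use the representation-theoretic input that $Z(\m)\times Z(\n)$ has length one, which by Zelevinsky's classification rules out any pair of segments from $\m$ and $\n$ being linked; the Crawley-Boevey identity for preprojective algebras, combined with the inclusion $\Hom_\Pi\subset\Hom_Q$ and \eqref{eq: notprec}, then yields $\ext^1_\Pi(C_1,C_2)=0$, contradicting the failure of strong commutativity.
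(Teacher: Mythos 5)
Your arguments for parts (1) and (2) are correct and take essentially the same route as the paper's (terse) proof: part (1) is an immediate translation via \eqref{eq: zmpi}, and for part (2) you correctly combine strong commutativity (giving $C_1*C_2=C_2*C_1=\prm_Q(\m+\n)$, which is the content the paper draws from \cite[Corollary 3.3 and (9.5)]{2103.12027}), Conjecture~\ref{conj: subconj} applied to both orderings, the duality \eqref{eq: switch}, multiplicity one from Remark~\ref{rem: simsub} to split off a direct summand, and Theorem~\ref{thm: kkko2}(1) for the $\square$-irreducible addendum.

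Part (3), however, contains a genuine error. You correctly reduce to showing that $C_1*C_2=C_2*C_1=\prm_Q(\m+\n)$ forces strong commutativity, and you correctly flag that the one-sided identity $C_1*C_2=\prm_Q(\m+\n)$ does not suffice (your $\m=[2,2]$, $\n=[1,1]$ example is accurate, and note that there $C_2*C_1=\prm_Q([1,2])\ne\prm_Q(\m+\n)$, so it does not disprove the two-sided implication). But the bridge you propose is unsound: it is not true that irreducibility of $Z(\m)\times Z(\n)$ rules out linked segments between $\m$ and $\n$. Take $\m=\n=[1,2]+[2,3]$. This is a balanced regular multisegment (indeed a lamina), so by Theorem~\ref{thm: mainLM} the representation $Z(\m)$ is $\square$-irreducible and $Z(\m)\times Z(\m)$ is irreducible; yet $[1,2]\prec[2,3]$, so $\Ext^1_Q(M_Q(\m),M_Q(\n))\ne0$ by \eqref{eq: notprec}. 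More generally, the whole point of Theorems~\ref{thm: LM} and~\ref{thm: mainLM} is that rigidity (and hence irreducibility of the product) routinely occurs with nontrivial $\ext^1_Q$, so the passage from irreducibility of parabolic induction to $\ext^1_\Pi=0$ cannot be routed through the vanishing of $\ext^1_Q$; your Crawley--Boevey step is then starting from a false premise. The paper instead closes the gap by invoking \cite[(9.5)]{2103.12027} together with \eqref{eq: switch} and Remark~\ref{rem: simsub}, which is what supplies the implication from $C_1*C_2=C_2*C_1=\prm_Q(\m+\n)$ to strong commutativity (and as the paper notes, only the weak form of Conjecture~\ref{conj: subconj} -- subquotient rather than subrepresentation -- is needed here).
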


Indeed, the first part follows from \eqref{eq: zmpi}.
The last two parts follow from \cite*[(9.5)]{2103.12027}, \eqref{eq: switch} and Remark \ref{rem: simsub}.
(For the last part, the weaker form of Conjecture \ref{conj: subconj} suffices.)

Note that in the case where $Z(\m)$ or $Z(\n)$ is $\square$-irreducible, the first part is precisely Conjecture 2 of \cite{MR4169050}. Thus, Conjecture \ref{conj: subconj} is a strengthening of \cite[Conjecture 2]{MR4169050}.

If we assume Conjecture \ref{conj: GSR}, then Conjecture \ref{conj: subconj} also gives a randomized algorithm
to determine the multisegment $\m_2$ (if exists) such that $Z(\m)=\soc(Z(\m_1)\times Z(\m_2))$
given two multisegments $\m$, $\m_1$ such $Z(\m_1)$ is $\square$-irreducible.
(See Theorem \ref{thm: cancel} and \S\ref{sec: compute}.)

An easy case of Conjecture \ref{conj: subconj} is the following.

\begin{lemma} \label{lem: notprec1}
Let $\m_1,\m_2$ be two multisegments and let $\m=\m_1+\m_2$.
Assume that $\Ext_Q^1(M_Q(\m_1),M_Q(\m_2))=0$ (cf. \eqref{eq: notprec}).
Let $C_i=\prm_Q(\m_i)$, $i=1,2$ and $C=\prm_Q(\m)$. Then,
$C_1*C_2=C$ and $\soc(\pi(C_1)\times\pi(C_2))=\pi(C)$.
The same holds if we replace $Q$ by $Q^{\op}$ throughout.
\end{lemma}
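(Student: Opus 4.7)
The plan is to handle the two assertions separately, each reducing to the observation that the hypothesis forces a ``direct-sum'' behavior on the $Q$-module side.

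For the identity $C_1*C_2=C$, I will argue via the forgetful functor $\pi_Q:\Lambda(V)\to R_Q(V)$. By definition of $C_1*C_2$, a generic point $x$ fits in a $\Pi$-module short exact sequence $0\to x_2\to x\to x_1\to 0$ with $(x_1,x_2)$ in a suitable open subset of $C_1\times C_2$; shrinking further (an open condition), we may assume $\pi_Q(x_i)\cong M_Q(\m_i)$. Applying $\pi_Q$ yields a short exact sequence of $Q$-modules $0\to M_Q(\m_2)\to\pi_Q(x)\to M_Q(\m_1)\to 0$, which splits by the hypothesis. Hence $\pi_Q(x)\cong M_Q(\m)$ and $x\in\cn_Q(\m)\subset C$; taking closures gives $C_1*C_2\subset C$, and since both are irreducible components of $\Lambda(V)$ they coincide.

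For the socle identity, I will use the standard-module trick. Choose permissible orderings $\Delta_1,\dots,\Delta_r$ of the segments of $\m_1$ and $\Gamma_1,\dots,\Gamma_s$ of the segments of $\m_2$, so that $\Delta_i\not\prec\Delta_j$ and $\Gamma_i\not\prec\Gamma_j$ for $i<j$. By \eqref{eq: notprec} the hypothesis gives $\Delta_i\not\prec\Gamma_j$ for all $i,j$, so the concatenation $\Delta_1,\dots,\Delta_r,\Gamma_1,\dots,\Gamma_s$ is a permissible ordering of $\m$; therefore $\std(\m)=\std(\m_1)\times\std(\m_2)$. Exactness of parabolic induction and $Z(\m_i)=\soc(\std(\m_i))\hookrightarrow\std(\m_i)$ then give an embedding $Z(\m_1)\times Z(\m_2)\hookrightarrow\std(\m)$. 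Since $\std(\m)$ is \SI{} with socle $Z(\m)$, every nonzero subrepresentation of $\std(\m)$ has $Z(\m)$ as its socle; applied to $Z(\m_1)\times Z(\m_2)$ this yields $\soc(\pi(C_1)\times\pi(C_2))=\pi(C)$.

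For the $Q^{\op}$ version, the same scheme applies: the hypothesis (via \eqref{eq: notprec'}) now reads $\Delta\not\prec\Gamma$ for every $\Delta$ of $\m_2$ and $\Gamma$ of $\m_1$, so listing the $\Gamma_j$'s before the $\Delta_i$'s gives a permissible ordering of $\m$, and combining this with the identity $L(\m)=\soc(L(\Delta_r)\times\dots\times L(\Delta_1)\times L(\Gamma_s)\times\dots\times L(\Gamma_1))$ together with $L(\m_i)\hookrightarrow$ the corresponding $L$-standard yields the claim. No serious obstacle is anticipated; the real content is the first sentence of each step, where vanishing of the quiver-side $\Ext^1$ is converted into splitting of a short exact sequence or into permissibility of a concatenated ordering, after which the remainder is routine.
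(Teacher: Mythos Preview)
Your proof is correct and follows essentially the same approach as the paper's: for $C_1*C_2=C$ you use that vanishing of $\Ext^1_Q$ forces $\pi_Q(x)\cong M_Q(\m)$ (the paper phrases this as ``any extension lies in $\cn_Q(\m)$'' and invokes \cite[(3.3)]{2103.12027}), and for the socle identity you use $\std(\m)=\std(\m_1)\times\std(\m_2)$, exactly as the paper does. One trivial slip: in the $Q^{\op}$ paragraph you cite \eqref{eq: notprec'}, which concerns $\Hom_{Q^{\op}}$; the relevant fact is the (unnumbered) $\Ext^1_{Q^{\op}}$ criterion just above it, though your translation of the hypothesis is correct.
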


\begin{proof}
If $\Ext_Q^1(M_Q(\m_1),M_Q(\m_2))=0$, then any extension of $x_1\in\cn_Q(\m_1)$
by $x_2\in\cn_Q(\m_2)$ lies in $\cn_Q(\m)$.
Hence, $C=C_1*C_2$ by \cite[(3.3)]{2103.12027}. The relation
\[
Z(\m)=\soc(Z(\m_1)\times Z(\m_2))
\]
(under the above condition) is well known and follows from the fact that
\[
\std(\m)=\std(\m_1)\times\std(\m_2).
\]
The second part is similar.
\end{proof}

The following analogue of \cite[Lemma 8.1]{2103.12027} gives another piece of evidence for Conjecture \ref{conj: subconj}.

\begin{lemma} \label{lem: sqr123}
Let $\pi_1$, $\pi_2$ be irreducible representations of $\GL$ with $\pi_2$ $\square$-irreducible.
Let $\pi=\soc(\pi_1\times\pi_2)$. Assume that $\pi\times\pi_1$ is irreducible.
Then, $\pi_1$ and $\pi$ are $\square$-irreducible.
\end{lemma}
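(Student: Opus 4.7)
The plan is to establish that $\pi$ is $\square$-irreducible first via a socle/cosocle sandwich, then deduce the $\square$-irreducibility of $\pi_1$ by a factorization of normalized intertwining operators. A preliminary observation: since $\pi\times\pi_1$ is irreducible and the Bernstein--Zelevinsky ring $\BZ$ is commutative, $[\pi_1\times\pi]=[\pi\times\pi_1]$, so $\pi_1\times\pi$ is also irreducible of the same isomorphism class, and the nonzero normalized intertwiner $\inter_{\pi_1;\pi}\colon\pi_1\times\pi\to\pi\times\pi_1$ is an isomorphism.

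For the first claim, apply $\pi\times(-)$ to $\pi\hookrightarrow\pi_1\times\pi_2$ to obtain $\pi\times\pi\hookrightarrow(\pi\times\pi_1)\times\pi_2$; by Theorem \ref{thm: kkko2} the target is \SI{} (since $\pi\times\pi_1$ is irreducible and $\pi_2$ is $\square$-irreducible), so $\tau:=\soc(\pi\times\pi)$ is irreducible and has multiplicity one in the Jordan--H\"older sequence. Dually, applying $(-)\times\pi$ to the surjection $\pi_2\times\pi_1\twoheadrightarrow\pi$ from \eqref{eq: switch} yields a surjection from the \SI{} representation $\pi_2\times(\pi_1\times\pi)$ onto $\pi\times\pi$, so $\tau':=\cos(\pi\times\pi)$ is irreducible with multiplicity one. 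Combining \eqref{eq: switch} with the preliminary iso,
\[
\tau=\soc((\pi\times\pi_1)\times\pi_2)=\cos(\pi_2\times(\pi\times\pi_1))=\cos(\pi_2\times(\pi_1\times\pi))=\tau'.
\]
Thus $\pi\times\pi$ has irreducible socle and cosocle both equal to $\tau$, each of multiplicity one; the composition $\tau\hookrightarrow\pi\times\pi\twoheadrightarrow\tau$ must be nonzero (otherwise $\tau$ would contribute multiplicity at least two), hence an isomorphism, and the resulting splitting combined with $\soc(\pi\times\pi)=\tau$ forces $\pi\times\pi=\tau$, proving $\pi$ is $\square$-irreducible.

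For the second claim, I would use the factorization
\[
\inter_{\pi\times\pi_1;\pi\times\pi_1}=(\id_\pi\times\inter_{\pi;\pi_1}\times\id_{\pi_1})\circ(\inter_{\pi;\pi}\times\inter_{\pi_1;\pi_1})\circ(\id_\pi\times\inter_{\pi_1;\pi}\times\id_{\pi_1})
\]
of normalized intertwining operators (compare \cite[Lemma 2.3]{MR3866895}). Since $\pi$ is now $\square$-irreducible, $\inter_{\pi;\pi}$ is a nonzero scalar by Theorem \ref{thm: kkko}; since $\pi\times\pi_1$ is irreducible, both $\inter_{\pi;\pi_1}$ and $\inter_{\pi_1;\pi}$ are isomorphisms, and in fact $\inter_{\pi;\pi_1}\circ\inter_{\pi_1;\pi}$ is a scalar on the irreducible $\pi_1\times\pi$. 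It follows that $\inter_{\pi\times\pi_1;\pi\times\pi_1}$ is a scalar if and only if $\inter_{\pi_1;\pi_1}$ is, i.e., $\pi\times\pi_1$ is $\square$-irreducible if and only if $\pi_1$ is. To close the argument one shows that $\pi\times\pi_1$ is $\square$-irreducible by running the sandwich of the first claim at the higher rank on $(\pi\times\pi_1)^2\cong(\pi\times\pi)\times(\pi_1\times\pi_1)$ (the iso commuting the middle factors via $\inter_{\pi_1;\pi}$), exploiting embeddings and surjections obtained by applying $\pi\hookrightarrow\pi_1\times\pi_2$ to both copies of $\pi$ and matching socle and cosocle via \eqref{eq: switch}.

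The main obstacle will be this higher-rank sandwich: while the factorization cleanly reduces $\pi_1$'s $\square$-irreducibility to that of $\pi\times\pi_1$, carrying out the socle/cosocle matching for $(\pi\times\pi)\times(\pi_1\times\pi_1)$ requires more intricate bookkeeping than in the first claim, and one may need additional input (for instance, Theorem \ref{thm: cancel} applied with the now-known $\square$-irreducibility of $\pi$) to identify the relevant irreducibles.
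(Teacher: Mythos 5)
Your argument that $\pi$ is $\square$-irreducible is correct and is a self-contained alternative to the paper, which simply cites \cite[Lemma 2.10]{MR3866895} for this step. The socle/cosocle sandwich — embedding $\pi\times\pi$ into the \SI{} product $(\pi\times\pi_1)\times\pi_2$ and surjecting onto it from $\pi_2\times(\pi_1\times\pi)$, then matching socle and cosocle via Theorem \ref{thm: cancel} and the isomorphism $\pi\times\pi_1\cong\pi_1\times\pi$ — works, and the multiplicity-one bookkeeping is sound.

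The second part, however, has a genuine gap, and it is not merely the bookkeeping difficulty you flag. Your braid factorization of $\inter_{\pi\times\pi_1;\pi\times\pi_1}$ cleanly shows that $\inter_{\pi_1;\pi_1}$ is a scalar if and only if $\inter_{\pi\times\pi_1;\pi\times\pi_1}$ is, i.e., $\pi_1$ is $\square$-irreducible if and only if $\pi\times\pi_1$ is. But this is an equivalence, and to extract the conclusion you still need an independent proof that $(\pi\times\pi_1)^2$ is irreducible. The ``higher-rank sandwich'' on $(\pi\times\pi_1)^2\cong(\pi\times\pi)\times(\pi_1\times\pi_1)$ runs into circularity: $\pi\times\pi$ is the irreducible $\square$-irreducible $\tau$, but the machinery of Theorem \ref{thm: kkko2} and Theorem \ref{thm: cancel} requires the other factor to be \emph{irreducible}, and irreducibility of $\pi_1\times\pi_1$ is precisely the statement you are trying to prove. (Similarly Theorem \ref{thm: kkko2}(4), which would give $\square$-irreducibility of $\pi\times\pi_1$ directly from that of $\pi$ and $\pi_1$, presupposes $\pi_1$ is $\square$-irreducible.) So the reduction goes in a circle, and no amount of ``additional input (for instance, Theorem \ref{thm: cancel})'' as you suggest will break it without a genuinely new idea.

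The paper avoids this by working instead with the triple product $\pi_2\times\pi_1\times\pi_1$ and the single intertwiner $R_{\pi_2;\pi_1}$: one sets $f=R_{\pi_2;\pi_1}\times\id_{\pi_1}$, $g=\id_{\pi_1}\times R_{\pi_2;\pi_1}$, notes the image of $f$ is $\pi\times\pi_1$ and of $g$ is $\pi_1\times\pi$ (both irreducible by hypothesis), and then shows via \cite[Corollary 2.2, Lemma 2.3]{MR3866895} that $h=g\circ f$ is a nonzero multiple of $R_{\pi_2;\pi_1\times\pi_1}$, and that for any irreducible $\tau\hookrightarrow\pi_1\times\pi_1$ the restriction of $h$ to $\pi_2\times\tau$ is still a nonzero multiple of $R_{\pi_2;\tau}$ with full image $\pi_1\times\pi$. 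This forces $\pi_1\times\pi\subset\tau\times\pi_2$ and hence $\tau=\pi_1\times\pi_1$, so $\pi_1\times\pi_1$ has a unique irreducible subrepresentation, namely itself. That argument makes essential use of the kernel $\omega=\Ker R_{\pi_2;\pi_1}$ and the uniqueness statement of \cite[Corollary 2.2]{MR3866895}, neither of which appears in your proposal. You would need to import something of that flavor to close your second part.
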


\begin{proof}
The $\square$-irreducibility of $\pi$ was proved in \cite[Lemma 2.10]{MR3866895}.
To show that $\pi_1$ is also $\square$-irreducible, consider the morphisms
\[
\pi_2\times\pi_1\times\pi_1\xrightarrow{f}\pi_1\times\pi_2\times\pi_1\xrightarrow{g}\pi_1\times\pi_1\times\pi_2
\]
where $f=R_{\pi_2;\pi_1}\times\id_{\pi_1}$ and $g=\id_{\pi_1}\times R_{\pi_2;\pi_1}$.
The image of $f$ is $\pi\times\pi_1$. The composition $h=g\circ f$ is non-zero
since $\pi\times\pi_1\not\subset\pi_1\times\omega$ where $\omega=\Ker R_{\pi_2;\pi_1}$ by \cite[Corollary 2.2]{MR3866895}.
Therefore, by \cite[Lemma 2.3]{MR3866895}, $h$ is equal to
a nonzero scalar multiple of $R_{\pi_2;\pi_1\times\pi_1}$.

The image of $g$ is $\pi_1\times\pi$, which is irreducible by assumption.
Therefore, the image of $h$ coincides with $\pi_1\times\pi$.

Let $\tau$ be an irreducible subrepresentation of $\pi_1\times\pi_1$.
The restriction $f'$ of $f$ to $\pi_2\times\tau$ is nonzero since
$\pi_2\times\tau\not\subset\omega\times\pi_1$, again by \cite[Corollary 2.2]{MR3866895}.
Since the image of $f$ is irreducible, it coincides with the image of $f'$.
Let $h'=g\circ f'$ be the restriction of $h$ to $\pi_2\times\tau$.
Then, the image of $h'$ is equal to the image of $h$. In particular, $h'$ is non-zero.
Therefore, by \cite[Lemma 2.3]{MR3866895}, $h'$ is a non-zero scalar multiple of $R_{\pi_2;\tau}$.
It follows that the image of $h'$ is contained in $\tau\times\pi_2$.
We obtain $\pi_1\times\pi\subset\tau\times\pi_2$, and therefore $\tau=\pi_1\times\pi_1$, as before.
We conclude that $\pi_1$ is $\square$-irreducible.
\end{proof}

\begin{definition}
Let $C\in\Irrcomp$. We say that $C$ is \emph{good} if for every $D\in\Irrcomp$ we have
\[
\pi(C*D)\hookrightarrow\pi(C)\times\pi(D)\text{ and }\pi(D*C)\hookrightarrow\pi(D)\times\pi(C).
\]
In this case, if $C$ is rigid, we also say that any rigid element of $C$ is good.
\end{definition}
Conjecture \ref{conj: subconj} states that every $C\in\Irrcomp$ is good.

We hasten to say that at the moment, we do not know how to prove that a non-rigid irreducible component is good even in a single example.

\begin{remark} \label{rem: symm}
Suppose that $\Cls$ is a set of irreducible components that is invariant under $C\mapsto C^\vee$.
Then, in order to prove that every $C\in\Cls$ is good, it suffices to show that
$\pi(C*D)\hookrightarrow\pi(C)\times\pi(D)$ for $C\in\Cls$ and $D\in\Irrcomp$.
Indeed, by \eqref{eq: switch}, \eqref{eq: contrag} and \eqref{eq: cont*}
\begin{align*}
\pi(D*C)\hookrightarrow\pi(D)\times\pi(C)\iff&
\pi(D*C)^\vee\hookrightarrow\pi(C)^\vee\times\pi(D)^\vee\\\iff&
\pi(C^\vee*D^\vee)\hookrightarrow\pi(C^\vee)\times\pi(D^\vee).
\end{align*}
\end{remark}

Another sanity check for Conjecture \ref{conj: subconj} is the following

\begin{lemma} \label{lem: prod}
Suppose that $C_1,\dots,C_k\in\Irrcomp$ are good and pairwise strongly commute
and that $\pi(C_i)$ is $\square$-irreducible for all $i$.
Then, $C_1*\dots*C_k$ is good.\footnote{When there is no danger of confusion we will write
$C_1*\dots *C_k$ for the ``product'' in arbitrary order.}
\end{lemma}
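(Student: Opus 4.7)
The plan is to first establish that $C := C_1 * \dots * C_k$ coincides with $C_1 \oplus \dots \oplus C_k$ and that $\pi(C) = \pi(C_1) \times \dots \times \pi(C_k)$ is $\square$-irreducible, and then to propagate the socle embedding from each factor to $C$ using the associativity of $*$ (established in \cite{2103.12027}) and the exactness of parabolic induction.

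For the first part, fix $i \ne j$. Since $C_i$ and $C_j$ strongly commute, $C_i * C_j = C_j * C_i$, and the goodness of $C_i$ applied to $C_j$, together with that of $C_j$ applied to $C_i$, gives embeddings of the same irreducible representation $\pi(C_i * C_j)$ into both $\pi(C_i) \times \pi(C_j)$ and $\pi(C_j) \times \pi(C_i)$. By Theorem \ref{thm: kkko2}(1), both of these induced representations are \SI, so $\pi(C_i * C_j)$ is their common socle. Part (3) of Theorem \ref{thm: kkko2} then yields irreducibility of $\pi(C_i) \times \pi(C_j)$, and part (4) yields that it is $\square$-irreducible. Applying Corollary \ref{cor: allprod} together with iterated uses of Theorem \ref{thm: kkko2}(4), the full product $\pi(C_1) \times \dots \times \pi(C_k)$ is irreducible and $\square$-irreducible. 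Meanwhile, pairwise strong commutativity (via \cite[Corollary 3.3]{2103.12027} and induction) identifies $C = C_1 \oplus \dots \oplus C_k$, so Remark \ref{rem: simsub} gives $\pi(C) = \pi(C_1) \times \dots \times \pi(C_k)$.

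For the second part, let $D \in \Irrcomp$. Using associativity of $*$, write $C * D = C_1 * (C_2 * \dots * (C_k * D))$. Goodness of $C_1$ (applied to the partner $C_2 * \dots * (C_k * D)$) yields
\[
\pi(C * D) \hookrightarrow \pi(C_1) \times \pi(C_2 * \dots * (C_k * D)).
\]
Applying goodness of $C_2$ on the right factor and using exactness of $\pi(C_1) \times (-)$ yields
\[
\pi(C * D) \hookrightarrow \pi(C_1) \times \pi(C_2) \times \pi(C_3 * \dots * (C_k * D)).
\]
Iterating $k$ times gives $\pi(C * D) \hookrightarrow \pi(C_1) \times \dots \times \pi(C_k) \times \pi(D) = \pi(C) \times \pi(D)$, as required. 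The companion embedding $\pi(D * C) \hookrightarrow \pi(D) \times \pi(C)$ is obtained symmetrically by writing $D * C = ((D * C_1) * \dots) * C_k$ and iterating the second half of goodness of each $C_i$ from the right.

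The delicate point is the first step: extracting irreducibility of each $\pi(C_i) \times \pi(C_j)$ from goodness plus strong commutativity alone. This hinges on the Kang--Kashiwara--Kim--Oh machinery of \S\ref{sec: sqr}, which converts the coincidence of the two socles into irreducibility through Theorem \ref{thm: kkko2}(3). Everything that follows is essentially formal, modulo checking that the associativity convention of $*$ in \cite{2103.12027} indeed permits the parenthesizations used above.
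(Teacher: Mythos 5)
Your proof is correct and takes essentially the same approach as the paper: both arguments establish $\pi(C)=\pi(C_1)\times\dots\times\pi(C_k)$ is $\square$-irreducible via Theorem~\ref{thm: kkko2} and Corollary~\ref{cor: allprod}, then deduce the required socle embeddings by peeling off factors one at a time using goodness of each $C_i$ together with the conditional associativity of $*$ from \cite[Corollary 7.4]{2103.12027}. The paper packages this as an induction reducing to $k=2$, whereas you unroll it into a direct $k$-step iteration; these are logically the same argument, and the associativity caveat you flag at the end is exactly the point the paper disposes of by citing \cite[Corollary 7.4]{2103.12027}.
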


\begin{proof}
By induction, it is enough to consider the case $k=2$.
Let $C_i\in\Irrcomp$, $i=1,2,3$ and suppose that $C_1$ and $C_2$ are good and strongly commute.
Let $\pi_i=\pi(C_i)$, $i=1,2,3$ and suppose that $\pi_1$ and $\pi_2$ are $\square$-irreducible.
By assumption and Theorem \ref{thm: kkko2}, $\pi_1\times\pi_2$ is $\square$-irreducible and $\pi_1\times\pi_2=\pi(C_1*C_2)$.
In particular, $\pi_1\times\pi_2\times\pi_3$ is \SI.
Thus,
\begin{multline*}
\soc(\pi_1\times\pi_2\times\pi_3)=\soc(\pi_1\times\soc(\pi_2\times\pi_3))=\soc(\pi_1\times\pi(C_2*C_3))\\=
\pi(C_1*(C_2*C_3))=\pi((C_1*C_2)*C_3).
\end{multline*}
Here we used the fact that $C_2$ and $C_1$ are good in the second and third equalities, respectively,
as well as \cite[Corollary 7.4]{2103.12027} for the last one.
In a similar vein we show that
\[
\soc(\pi_3\times\pi_1\times\pi_2)=\pi(C_3*(C_1*C_2)).
\]
It follows that $C_1*C_2$ is good as required.
\end{proof}

For convenience, for $C\in\Irrcomp(\bfd)$ we
set $\max C=\max\{i\mid\bfd(i)\ne0\}$ with the convention that $\max C=0$ if $C=0$.
Similarly define $\min C$.

To go further, we prove the following general reduction step.

\begin{lemma} \label{lem: reduct}
Let $C\in\Irrcomp$.
Assume that $\pi(C*D)$ is a subrepresentation of $\pi(C)\times\pi(D)$
for every $D\in\Irrcomp$ such that $\max D\le\max C$, or alternatively, for every $D\in\Irrcomp$ such that $\min D\le\min C$.
Then, $\pi(C*D)$ is a subrepresentation of $\pi(C)\times\pi(D)$ for every $D\in\Irrcomp$.
\end{lemma}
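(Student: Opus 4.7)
The approach handles the $\min$-hypothesis case; the $\max$-hypothesis case is proved analogously by interchanging the roles of left and right endpoints of segments throughout. I proceed by induction on the number $|\n|$ of segments in $\n$, where $D=\prm_Q(\n)$. The case $|\n|=0$ is trivial, and if $\min D\le\min C$ the hypothesis applies immediately, so assume $\min D>\min C$.

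Split $\n=\n_{bot}+\n_{rest}$, where $\n_{bot}$ consists of the segments of $\n$ whose left endpoint equals $\min D$ and $\n_{rest}$ of the remaining segments. For any $\Gamma=[c,d]\in\n_{rest}$ (so $c>\min D$) and any $\Delta\in\n_{bot}$ we have $\Gamma\not\prec\Delta$, so \eqref{eq: notprec} yields $\Ext^1_Q(M_Q(\n_{rest}),M_Q(\n_{bot}))=0$. Lemma \ref{lem: notprec1} then gives $D=D_{rest}*D_{bot}$ together with the socle embedding $\pi(D)\hookrightarrow\pi(D_{rest})\times\pi(D_{bot})$, where $D_{rest}=\prm_Q(\n_{rest})$ and $D_{bot}=\prm_Q(\n_{bot})$.

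The segments of $\n_{bot}$ share their left endpoint and are therefore pairwise non-precedent, so the corresponding single-segment components are rigid, good (by the single-segment case of Conjecture \ref{conj: subconj} proved earlier), and $\square$-irreducible (each is a one-dimensional character, and its self-product is non-linked, hence irreducible). Their pairwise $*$-products coincide in both orders, so by \cite[Proposition 6.1]{2103.12027} they pairwise strongly commute. Lemma \ref{lem: prod}, combined with Theorem \ref{thm: kkko2}(4), then shows that $D_{bot}$ is good and $\pi(D_{bot})$ is $\square$-irreducible. If $\n_{rest}=\emptyset$ the conclusion is immediate from goodness of $D_{bot}$; otherwise, by induction on $|\n|$ applied to $D_{rest}$, we get $\pi(C*D_{rest})\hookrightarrow\pi(C)\times\pi(D_{rest})$. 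Applying goodness of $D_{bot}$ with $C'=C*D_{rest}$ and using associativity of $*$ (\cite[Corollary 7.4]{2103.12027}) produces the chain
\[
\pi(C*D)=\pi((C*D_{rest})*D_{bot})\hookrightarrow\pi(C*D_{rest})\times\pi(D_{bot})\hookrightarrow\pi(C)\times\pi(D_{rest})\times\pi(D_{bot}).
\]

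The step I expect to be the main obstacle is descending from this embedding into the triple product to the required embedding into $\pi(C)\times\pi(D)$. This is handled by a clean socle argument. Because $\pi(D_{bot})$ is $\square$-irreducible and is the rightmost factor, Theorem \ref{thm: kkko2}(1) shows that the triple product $(\pi(C)\times\pi(D_{rest}))\times\pi(D_{bot})$ is \SI{}, so its socle is irreducible with multiplicity one and must therefore equal the embedded $\pi(C*D)$. On the other hand, $\pi(C)\times\pi(D)$ embeds into the triple product via $\pi(D)\hookrightarrow\pi(D_{rest})\times\pi(D_{bot})$, and the socle of a subrepresentation lies inside the socle of the ambient representation. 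Hence $\soc(\pi(C)\times\pi(D))\subseteq\pi(C*D)$; since the left-hand side is non-zero, equality with the irreducible $\pi(C*D)$ follows, yielding the desired embedding $\pi(C*D)\hookrightarrow\pi(C)\times\pi(D)$.
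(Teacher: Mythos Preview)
Your proof has a circularity problem and two further unjustified steps.

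\textbf{Circularity.} You invoke goodness of the single-segment components of $\n_{bot}$ (and hence of $D_{bot}$ via Lemma~\ref{lem: prod}). In the paper's logical order, goodness of $\prm_Q(\Delta)$ is Corollary~\ref{cor: segood}, which is deduced \emph{from} Lemma~\ref{lem: reduct}. So you are assuming what this lemma is meant to establish. The paper's argument avoids this entirely: rather than using goodness of $D_2$, it applies Lemma~\ref{lem: notprec1} directly to the pair $(C*D_1,\,D_2)$, which is legitimate because $\max(C*D_1)<t$ forces the relevant $\Ext^1$-vanishing.

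\textbf{Associativity.} The identity $C*D=(C*D_{rest})*D_{bot}$ via \cite[Corollary 7.4]{2103.12027} requires $\hom_\Pi(D_{bot},C)=0$, which you do not check and which can fail in your setup. For instance, with $C=\prm_Q([1,2])$ and $D_{bot}=\prm_Q([2,3])$ one computes $\hom_\Pi(D_{bot},C)=1$. The paper's choice of parametrization (using $\prm_{Q^{\op}}$ in the $\max$ case and $\prm_Q$ in the $\min$ case) is precisely what makes the required $\hom$-vanishing \eqref{eq: notprecPP} automatic: there the segments of $D_2$ have an endpoint strictly outside the support of $C$, which is not the situation you have arranged.

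\textbf{The SI step.} Theorem~\ref{thm: kkko2}(1) applies to a product of two \emph{irreducible} representations. You apply it to $(\pi(C)\times\pi(D_{rest}))\times\pi(D_{bot})$, but $\pi(C)\times\pi(D_{rest})$ need not be irreducible, so the triple product is not known to be SI. The paper closes the argument instead with \cite[Lemma~9]{MR4169050}, which gives $\soc(\pi(C)\times\pi(D_1)\times\pi(D_2))=\soc(\pi(C)\times\pi(D))$ directly.
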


\begin{proof}
We consider the first case.
We argue by induction on $t:=\max D$.
The statement is trivial if $D=0$.
For the induction step, we may assume of course that $t>\max C$, otherwise there is nothing to prove.
Let $D=\prm_{Q^{\op}}(\n)$ be the $Q^{\op}$-parameterization of $D$ by a multisegment.
Decompose $\n$ as $\n_1+\n_2$ (possibly with $\n_1=0$) where $\n_2\ne0$ consists of the segments $\Delta$ in $\n$ (counted with their multiplicities)
such that $e(\Delta)=t$. Let $D_i=\prm_{Q^{\op}}(\n_i)$, $i=1,2$.
Note that $\max D_1<t$. Then, $D_2$ is rigid and by Lemma \ref{lem: notprec1} and \eqref{eq: notprecPP}, $D=D_1*D_2$ and $\hom_\Pi(D_2,C)=0$.
Therefore, by \cite[Corollary 7.4]{2103.12027}
\[
C*D=(C*D_1)*D_2.
\]
Thus,
\[
\pi(C*D)=\pi((C*D_1)*D_2).
\]
Since $\max(C*D_1)<t$ we may apply Lemma \ref{lem: notprec1} to $C*D_1$ and $D_2$ to obtain
\[
\pi((C*D_1)*D_2)=\soc(\pi(C*D_1)\times\pi(D_2)).
\]
By induction hypothesis,
\[
\pi(C*D_1)\hookrightarrow\pi(C)\times\pi(D_1).
\]
Hence,
\[
\soc(\pi(C*D_1)\times\pi(D_2))\hookrightarrow\soc(\pi(C)\times\pi(D_1)\times\pi(D_2)).
\]
However, by \cite[Lemma 9]{MR4169050}
\[
\soc(\pi(C)\times\pi(D_1)\times\pi(D_2))=\soc(\pi(C)\times\pi(D)).
\]
All in all, we obtain
\[
\pi(C*D)\hookrightarrow\soc(\pi(C)\times\pi(D))
\]
as required.

A similar argument (using $\prm_Q$) applies to the case where $\min D\le\min C$.
\end{proof}

From Lemmas \ref{lem: reduct} and \ref{lem: notprec1} and Remark \ref{rem: symm} we conclude
\begin{corollary} \label{cor: segood}
Let $\Delta$ be a segment. Then, $\prm_Q(\Delta)$ and $\prm_{Q^{\op}}(\Delta)$ are good.
\end{corollary}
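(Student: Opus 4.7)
The plan is to combine the three tools indicated in the paragraph preceding the corollary: Remark \ref{rem: symm}, Lemma \ref{lem: reduct}, and Lemma \ref{lem: notprec1}.

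First I would set $\Cls = \{\prm_Q(\Delta), \prm_{Q^{\op}}(\Delta) : \Delta \in \Psi\}$ and observe that $\Cls$ is stable under $C \mapsto C^\vee$, because $\Delta^\vee = [n+1-b, n+1-a]$ is again a segment whenever $\Delta = [a,b]$ is. By Remark \ref{rem: symm}, it therefore suffices to establish the embedding $\pi(C*D) \hookrightarrow \pi(C) \times \pi(D)$ for every $C \in \Cls$ and every $D \in \Irrcomp$.

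Next I would handle $C = \prm_Q(\Delta)$ with $\Delta = [a,b]$, so that $\max C = b$. By Lemma \ref{lem: reduct} in its max-version, it is enough to verify the embedding for $D$ satisfying $\max D \le b$. Writing such a $D$ as $\prm_Q(\n)$, every segment $\Gamma = [c,d]$ of $\n$ has $d \le b$, and hence $\Delta \not\prec \Gamma$---for the precedence $\Delta \prec \Gamma$ would demand $d \ge b+1$. The criterion \eqref{eq: notprec} then delivers $\Ext^1_Q(M_Q(\Delta), M_Q(\n)) = 0$. Lemma \ref{lem: notprec1} now applies to give both $\prm_Q(\Delta)*\prm_Q(\n) = \prm_Q(\Delta+\n)$ and the socle identification $Z(\Delta+\n) = \soc(Z(\Delta) \times Z(\n))$, which is exactly the embedding $\pi(C*D) \hookrightarrow \pi(C) \times \pi(D)$.

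For $C = \prm_{Q^{\op}}(\Delta)$ the argument runs entirely in parallel with $Q$ everywhere exchanged for $Q^{\op}$: the same reduction via Lemma \ref{lem: reduct} together with the $Q^{\op}$-counterparts of the precedence criterion and of Lemma \ref{lem: notprec1} yields the embedding. I expect no substantive obstacle; the proof is a direct assembly of the three named tools. The single conceptual step that requires thought is the initial application of Remark \ref{rem: symm}, which halves the work by exploiting the $^\vee$-symmetry of the class of segment components; after that the verification reduces to the combinatorial observation that a segment terminating no later than $\Delta$'s right endpoint cannot be preceded (in Zelevinsky's sense) by $\Delta$ itself.
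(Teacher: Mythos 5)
Your proposal matches the paper's intended proof: it is exactly the assembly of Remark \ref{rem: symm}, Lemma \ref{lem: reduct}, and Lemma \ref{lem: notprec1} that the paper leaves implicit, and your verification for $C=\prm_Q(\Delta)$ is correct as written. One small point on the $Q^{\op}$ half deserves spelling out rather than ``running everything in parallel'': there the correct reduction is via the $\min$-clause of Lemma \ref{lem: reduct}, not the $\max$-clause, since $\max D\le b$ does \emph{not} rule out $\Gamma\prec\Delta$ for segments $\Gamma$ of $\n$ (e.g.\ $\Gamma=[a-1,b-1]$). Using the $\min$-clause one reduces to $D=\prm_{Q^{\op}}(\n)$ whose segments all start at $\ge a$, whence $\Gamma\not\prec\Delta$ for every segment $\Gamma$ of $\n$ and $\Ext^1_{Q^{\op}}(M_{Q^{\op}}(\Delta),M_{Q^{\op}}(\n))=0$, and the $Q^{\op}$-version of Lemma \ref{lem: notprec1} finishes as in your $Q$-case. (Note also that the inequality in the second clause of Lemma \ref{lem: reduct} as printed, $\min D\le\min C$, should read $\min D\ge\min C$ -- the inductive peeling in its proof increases $\min$ -- and that is the version one needs here.)
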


\begin{remark} \label{rem: recipe1}
The proof of Lemma \ref{lem: reduct}, together with Lemma \ref{lem: notprec1}, gives a combinatorial recipe for the computation
of $\m*\n$ in the case where $\m$ or $\n$ is a single segment.
Note that the recipe involves the M\oe glin-Waldspurger involution since we switch between $\prm_Q$ and $\prm_{Q^{\op}}$.
\end{remark}

\section{Regular multisegments} \label{sec: regms}

In this section we recall the results of \cite{MR3866895} for regular multisegments
and state our main theoretical result towards Conjecture \ref{conj: subconj}.

We start with a special case.
Consider the quiver $Q=A_{2n-1}$ and graded dimension $\bfd=(1,2,\dots,n,n-1,\dots,1)$
\[
\overset1{\bullet}\rightarrow\overset2{\bullet}\rightarrow\dots\rightarrow\overset{n}{\bullet}\rightarrow
\overset{n-1}{\bullet}\rightarrow\dots\rightarrow\overset1{\bullet}
\]
Fix $V=\oplus_{i=1}^{2n-1}V_i$ with $\dim V_i=\bfd(i)=\min(i,2n-i)$ (so that $\dim V=n^2$).
As in \cite[\S8]{MR1458969}, consider the following open, $G_V$-stable subset of $R_Q(V)$
\[
R_Q^\flat(V)=\{T_+\in R_Q(V)\mid T_+\rest_{V_i}\text{ is injective }\forall i<n\text{ and surjective }\forall i\ge n\}.
\]
For any $T_+\in R_Q^\flat(V)$ we assign the following two complete flags of $V_n$
\begin{gather*}
0\subsetneq T_+^{n-1}(V_1)\subsetneq T_+^{n-2}(V_2)\subsetneq\dots\subsetneq T_+(V_{n-1})\subsetneq V_n,\\
0\subsetneq\Ker(T_+\rest_{V_n})\subsetneq\Ker(T_+^2\rest_{V_n})\subsetneq\dots\subsetneq\Ker(T_+^{n-1}\rest_{V_n})\subsetneq V_n.
\end{gather*}
The resulting map $R_Q^\flat(V)\rightarrow X\times X$, where $X$ is the (complete) flag variety of $\GL_n$,
is a principal $\prod_{i\ne n}\GL(V_i)$-bundle.
Hence, we get an isomorphism of $\GL_n$-varieties
\[
R_Q^{\flat}(V)/\prod_{i\ne n}\GL(V_i)\simeq X\times X.
\]

Thus, the $G_V$-orbits in $R_Q^{\flat}(V)$ correspond to the $\GL_n$-orbits in $X\times X$,
which are parameterized by the symmetric group $S_n$.

If $Y_w$, $w\in S_n$ is a $\GL_n$-orbit in $X\times X$ (i.e., a Bruhat cell), then
\begin{equation} \label{eq: Cw}
C_w=\prm_Q([1,w(1)+n-1]+\dots+[n,w(n)+n-1])\in\Irrcomp(\bfd)
\end{equation}
is the closure of the conormal bundle of the $G_V$-orbit in $R_Q^{\flat}(V)$ corresponding to $Y_w$.

Denote by $X_w$ the closure of $Y_w$ (i.e., the Schubert variety).

For example, $X_e=Y_e=\Delta X$ (diagonal embedding) for the identity permutation,
while for the longest element, $Y_{w_0}$ is open and $X_{w_0}=X\times X$.

The following result affirms a special case of Conjecture \ref{conj: GSR}.

\begin{theorem}[\cite{MR3866895}] \label{thm: LM}
The following conditions on $w\in S_n$ are equivalent.\footnote{The equivalence of the first two conditions is easy.}
\begin{enumerate}
\item $C_w$ is rigid.
\item \label{part: open} The conormal bundle of $Y_w\subset X$ admits an open $\GL_n$-orbit.
\item \label{part: smth} $X_{w_0w}$ is smooth.
\item $\pi(C_w)\times\pi(C_w)$ is irreducible.
\end{enumerate}
\end{theorem}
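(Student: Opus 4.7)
The plan is to establish the four equivalences in three groups. The equivalence (1) $\iff$ (2) is essentially immediate from the construction of $C_w$: since $C_w$ is the closure of the conormal bundle over the $G_V$-orbit in $R_Q^\flat(V)$ corresponding to $Y_w$, and $R_Q^\flat(V) \to X \times X$ is a principal $\prod_{i \ne n}\GL(V_i)$-bundle, the $G_V$-orbits on this conormal bundle are in bijection with $\GL_n$-orbits on the conormal bundle of $Y_w$. Under this bijection, an open dense orbit of rigid modules in $C_w$ corresponds exactly to condition (2).

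For (2) $\iff$ (3), my plan is to invoke classical Schubert geometry: the conormal bundle of $Y_w$ admits an open $\GL_n$-orbit iff the characteristic cycle of the intersection cohomology sheaf $\operatorname{IC}(X_{w_0 w})$ is irreducible iff $X_{w_0 w}$ is smooth. The Kashiwara--Saito theory of characteristic cycles makes the first equivalence precise, while the second is classical. Alternatively, the Lakshmibai--Sandhya pattern-avoidance criterion on $w_0 w$ (avoidance of $3412$ and $4231$) can be matched against an explicit stabilizer-dimension computation on the cotangent fiber.

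For (1) $\iff$ (4), the direction (4) $\Rightarrow$ (1) follows from \cite[Corollary 3.3]{2103.12027} combined with Theorem \ref{thm: kkko}: $\square$-irreducibility of $\pi(C_w)$ forces $C_w$ to strongly commute with itself, and a generic $\ext^1_\Pi$ computation yields rigidity of $C_w$. The substantive direction is (1) $\Rightarrow$ (4). My plan is to use the matching criterion of Remark \ref{rem: matching}, which in the regular setting gives sharp combinatorial control of $\ext^1_\Pi(C_w, C_w)$, together with the third condition of Theorem \ref{thm: kkko}: vanishing of self-extensions for a rigid $\Pi$-module in $C_w$ should force the normalized intertwining operator $\inter_{\pi(C_w); \pi(C_w)}$ to be a scalar, delivering $\square$-irreducibility.

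The main obstacle will be this last implication (1) $\Rightarrow$ (4). Bridging the purely geometric vanishing of self-$\Ext^1$ on the preprojective side to the analytic statement that $\inter_{\pi(C_w); \pi(C_w)}$ is scalar requires exploiting the very specific regular structure. A more powerful but less transparent alternative is to route through Kazhdan--Lusztig theory for $\GL_n(F)$: smoothness of $X_{w_0 w}$ implies triviality of all relevant Kazhdan--Lusztig polynomials, which in turn forces the Jordan--H\"older content of $\pi(C_w) \times \pi(C_w)$ to collapse to a single simple subquotient, yielding irreducibility without ever computing $\inter_{\pi(C_w); \pi(C_w)}$ directly.
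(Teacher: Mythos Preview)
First, note that the paper does not prove this theorem: it is cited from \cite{MR3866895} and only recalled here, with the more general version restated as Theorem~\ref{thm: mainLM}. So there is no in-paper proof to compare against; the footnote confirms that only (1)$\iff$(2) is regarded as easy.

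Your argument for (1)$\iff$(2) is fine and matches the paper's remark.

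For (2)$\iff$(3), the chain you propose via characteristic cycles is not a proof: neither biconditional is a citable fact. Smoothness of $X_{w_0w}$ does imply that the characteristic cycle of its IC sheaf is irreducible, but the converse fails in general, and the asserted link between ``open $\GL_n$-orbit on the conormal bundle of $Y_w$'' and irreducibility of $\operatorname{CC}(\operatorname{IC}(X_{w_0w}))$ is not standard and would itself require substantial work. The argument in \cite{MR3866895} instead passes through the combinatorial balanced/pattern-avoidance condition (the $1324$, $2143$ avoidance mentioned after the theorem) and matches it separately to each side.

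The serious gap is in (1)$\iff$(4), where your reasoning is circular in both directions. For (4)$\Rightarrow$(1) you claim that $\square$-irreducibility yields strong self-commutativity of $C_w$ and then rigidity; but no result in this paper or in \cite{2103.12027} takes you from $\square$-irreducibility of $\pi(C)$ to any geometric statement about $C$---that implication is precisely the open Conjecture~\ref{conj: GSR}---and moreover strong self-commutativity does not imply rigidity (Example~\ref{ex: nonrigid}). For (1)$\Rightarrow$(4), there is likewise no general mechanism by which $\ext^1_\Pi(C,C)=0$ forces $\inter_{\pi(C);\pi(C)}$ to be scalar; this is again Conjecture~\ref{conj: GSR}, which Theorem~\ref{thm: LM} is meant to \emph{verify} in a special case, not assume. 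Your Kazhdan--Lusztig alternative conflates two unrelated families of polynomials: the finite-type KL polynomials controlling singularities of $X_{w_0w}$ are not the (affine) ones that compute the Jordan--H\"older series of $\pi(C_w)\times\pi(C_w)$. The proof in \cite{MR3866895} is specific to the regular case and proceeds by direct representation theory: an inductive reduction showing balanced $\Rightarrow$ $\square$-irreducible, together with an explicit construction of a nontrivial subquotient when the multisegment contains a $4231$ or $3412$ pattern.
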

We also recall that $X_{w_0w}$ is smooth $\iff$ $X_{w_0w}$ is rationally smooth $\iff$
$w$ is $1324$ and $2143$ avoiding \cite{MR788771, MR1051089}.

Note that Example \ref{ex: nonrigid} is not covered by Theorem \ref{thm: LM}, but we
will shortly discuss the more general results of \cite{MR3866895}, which
cover this example.

In passing, we mention that
the equivalence of the purely geometric conditions \ref{part: open} and \ref{part: smth} (in the more general context)
led Anton Mellit to make the following conjecture, for which there is overwhelming computational support.
\begin{conjecture}[Mellit] \label{conj: Mellit}
Let $x,w\in S_n$ with $Y_w\subset X_x$ (i.e., $w\le x$ in the Bruhat order).
Suppose that $X_x$ is smooth. Then, the following conditions are equivalent.
\begin{enumerate}
\item The conormal bundle of $Y_w\subset X_x$ admits an open $\GL_n$-orbit.
\item The smooth locus of $X_{w_0w}$ contains $Y_{w_0x}$.
\end{enumerate}
\end{conjecture}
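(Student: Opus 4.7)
The plan is to reduce the general statement to the base case $x = w_0$, where $X_x = X \times X$ is the entire ambient space. In this base case the condition ``smooth locus of $X_{w_0 w}$ contains $Y_{w_0 \cdot w_0} = Y_e$'' is equivalent to $X_{w_0 w}$ being smooth globally, since in type $A$ a Schubert variety that is smooth at its minimal cell is smooth everywhere (by the Lakshmibai-Sandhya pattern-avoidance criterion for smoothness of Schubert varieties at the identity point). With this reduction, the conjecture becomes precisely the equivalence of parts \ref{part: open} and \ref{part: smth} of Theorem \ref{thm: LM}, which is already known.

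For general smooth $X_x$ with $w \le x$, I would first translate both sides combinatorially. The right-hand side, that $X_{w_0 w}$ is smooth at $Y_{w_0 x}$, is equivalent (since rational smoothness coincides with smoothness in type $A$) to the triviality of the Kazhdan-Lusztig polynomial $P_{w_0 x, w_0 w}$, which admits a pattern-avoidance description due to Cortez, Kassel-Lascoux-Reutenauer and Woo-Yong. On the left-hand side, I would exploit the Lakshmibai-Sandhya / Gasharov-Reiner structure theorem, which realises a smooth $X_x$ (at least locally around each cell $Y_w$) as an iterated Grassmannian bundle, in order to produce a local model for $X_x$ around $Y_w$. One then attempts to attach to this local model a restricted quiver and preprojective algebra in parallel with the $A_{2n-1}$ construction of \S\ref{sec: basicPP}, and to encode the open $\GL_n$-orbit condition on the conormal bundle as the rigidity of a distinguished module, mirroring the role of $C_w$ in the proof of Theorem \ref{thm: LM}.

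The main obstacle I expect is that the two sides of the conjecture involve a priori unrelated Schubert varieties $X_x$ and $X_{w_0 w}$, with no obvious geometric map between them. In the base case, the proof of Theorem \ref{thm: LM} proceeds through a quiver construction firmly tied to the full ambient space $X \times X$ and the type $A_{2n-1}$ preprojective algebra; replacing $X \times X$ by an arbitrary smooth $X_x$ will require either (i) a restriction functor between the preprojective algebra of $A_{2n-1}$ and a smaller algebra adapted to $X_x$, whose derived compatibility matches smoothness of $X_{w_0 w}$ at $Y_{w_0 x}$ with rigidity of the restricted module, or (ii) a characteristic-cycle analysis of the intersection-cohomology sheaf of $X_{w_0 w}$ at $Y_{w_0 x}$, matched against the obstruction to an open orbit on the conormal bundle of $Y_w \subset X_x$. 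I would expect the direction ``open orbit implies smoothness'' to be the more accessible one via such local models, whereas the converse direction will likely demand the deeper characteristic-cycle or Kazhdan-Lusztig input.
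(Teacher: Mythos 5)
This statement is labeled a conjecture in the paper and is \emph{not} proved there. The authors explicitly say only that there is ``overwhelming computational support'' and that the special case where $x$ is $231$-avoiding was established, indirectly, in \cite{MR3866895}. So there is no proof in the paper to compare against; what you have submitted is also not a proof, but a research plan.

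The only rigorous content in your proposal is the base case $x=w_0$, and that part is correct: since the singular locus of a Schubert variety in the full flag variety is closed and Borel-stable, it contains the minimal cell $Y_e$ whenever it is nonempty, so smoothness of $X_{w_0 w}$ at $Y_e = Y_{w_0 x}$ is equivalent to global smoothness, and the conjecture then reduces precisely to the equivalence of conditions \eqref{part: open} and \eqref{part: smth} of Theorem~\ref{thm: LM}. However, $w_0$ is $231$-avoiding, so this base case is a strict sub-case of the partial result the paper already cites. For general smooth $X_x$, everything you write is conditional and speculative: you invoke Gasharov--Reiner local Grassmannian-bundle models, Kazhdan--Lusztig polynomial pattern-avoidance criteria, a hypothetical restricted preprojective-algebra construction, and a characteristic-cycle analysis, but you do not actually carry out any of these reductions, and you yourself flag the central difficulty (the absence of any geometric map relating $X_x$ and $X_{w_0 w}$) as an unresolved obstacle. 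There is no step in the proposal that closes the gap between the two sides of the asserted equivalence beyond the already-known case, so this does not constitute a proof of the conjecture.
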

In the case where $x$ is $231$-avoiding (which implies that $X_x$ is smooth), this conjecture (along with a representation-theoretic criterion) was proved in \cite{MR3866895},
albeit indirectly.

\begin{remark}
In general, for any two parabolic subgroups $P$ and $Q$ of $\GL_n$, one can realize in a similar way the $\GL_n$-action on $P\bs\GL_n\times Q\bs\GL_n$
in the geometry of a $G_V$-action on an open subset of $R_Q(V)$ for a suitable graded vector space $V$.
Unfortunately, the naive generalization of Theorem \ref{thm: LM} to this context is not true --
nor do we have a conjectural replacement for the smoothness condition.
\end{remark}

For any segment $\Delta=[a,b]$ we write $b(\Delta)=a$ and $e(\Delta)=b$.

The multisegments considered above satisfy the following property (cf.\ \eqref{eq: Cw}):
writing $\m=\Delta_1+\dots+\Delta_k$, we have
$b(\Delta_i)\ne b(\Delta_j)$ and $e(\Delta_i)\ne e(\Delta_j)$ for all $i\ne j$.
Such multisegments are called regular.

Recall the setup of \cite[\S6]{MR3866895}.
We say that a (necessarily regular) multisegment is of type $4231$ if it can be written as $\m=\Delta_1+\dots+\Delta_k$ with $k\ge4$
such that the following conditions are satisfied.\footnote{Note that in \cite[Definition 6.10]{MR3866895} a different ordering of the $\Delta_i$'s
was used, justifying the terminology.}
\begin{enumerate}
\item $\Delta_i\prec\Delta_{i-1}$, $i=3,\dots,k$.
\item $b(\Delta_k)<b(\Delta_1)<b(\Delta_{k-1})$.
\item $e(\Delta_3)<e(\Delta_1)<e(\Delta_2)$.
\end{enumerate}
Similarly, a (necessarily regular) multisegment is of type $3412$ if it can be written as $\m=\Delta_1+\dots+\Delta_k$ with $k\ge4$
such that the following conditions are satisfied.
\begin{enumerate}
\item $\Delta_i\prec\Delta_{i-1}$, $i=4,\dots,k$ and $\Delta_2\prec\Delta_1$.
\item $b(\Delta_2)<b(\Delta_k)<b(\Delta_1)<b(\Delta_{k-1})$.
\item $e(\Delta_4)<e(\Delta_2)<e(\Delta_3)<e(\Delta_1)$.
\end{enumerate}

We say that a regular multisegment is balanced if it does not admit any multisegment of type $4231$ or type $3412$ as a submultisegment.
The main result of \cite{MR3866895}, for which Theorem \ref{thm: LM} is a special case, is the following.
\begin{theorem}[\cite{MR3866895}] \label{thm: mainLM}
Suppose that $\m$ is a regular multisegment.
Then, the following conditions are equivalent.
\begin{enumerate}
\item $\m$ is balanced.
\item $Z(\m)$ is $\square$-irreducible.
\item $\prm_Q(\m)$ is rigid.
\end{enumerate}
\end{theorem}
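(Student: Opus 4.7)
The plan is to complete the cycle of implications \emph{balanced} $\Rightarrow$ $\prm_Q(\m)$ \emph{rigid} $\Rightarrow$ $Z(\m)$ \emph{is $\square$-irreducible} $\Rightarrow$ $\m$ \emph{balanced}, extending the strategy of Theorem \ref{thm: LM} (which handles the permutation-type case) to all regular multisegments. For the first implication I would argue by induction on the number of segments of $\m$, locating an extremal segment $\Delta \in \m$ (for example one with maximal endpoint $e(\Delta)$) so that $\m = \m' + \Delta$ with $\m'$ still regular and balanced, and then decompose $\prm_Q(\m) = \prm_Q(\m') * \prm_Q(\Delta)$ to reduce rigidity of $\prm_Q(\m)$ to that of $\prm_Q(\m')$ via \cite[Corollary 9.3]{2103.12027}. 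The combinatorial engine is the matching criterion of Remark \ref{rem: matching}: rigidity amounts to the existence of a matching in $\grph_{\m;\m}$ covering every vertex of $U_{\m}$, and the crux is to verify that the presence of a $4231$ or $3412$ pattern is precisely what forces such a matching to fall short.

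For \emph{rigid} $\Rightarrow$ \emph{$\square$-irreducible} I would invoke Theorem \ref{thm: kkko} and reduce to showing that $\inter_{\pi;\pi}$ is a scalar, where $\pi = Z(\m)$. Rigidity says the generic module $x \in \prm_Q(\m)$ has $\Ext^1_\Pi(x,x) = 0$ and carries an open $G_V$-orbit; via the Zelevinsky--Lusztig dictionary this geometric simplicity should propagate to the generic endomorphism space of $\pi \times \pi$. Concretely I expect the generic rank of $\inter_{\pi;\pi}$ to be computable as $\hom_\Pi(\prm_Q(\m), \prm_Q(\m))$, which rigidity forces down to the scalar contribution.

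For \emph{$\square$-irreducible} $\Rightarrow$ \emph{balanced} I would argue by contrapositive. Assume $\m$ contains a sub-multisegment $\m_0$ of type $4231$ or $3412$. For the minimal four-segment patterns one can compute Jacquet modules of $Z(\m_0) \times Z(\m_0)$ and exhibit a proper subquotient, showing directly that $Z(\m_0)$ is not $\square$-irreducible. To bootstrap, decompose $\m = \m_0 + \m_1$ with $\m_1$ balanced and use Lemma \ref{lem: notprec1} together with Lemma \ref{lem: sqr123} and Theorem \ref{thm: kkko2} to show that $\square$-irreducibility of $Z(\m)$ would descend to $Z(\m_0)$, a contradiction.

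The main obstacle is the middle implication, \emph{rigid} $\Rightarrow$ \emph{$\square$-irreducible}, since it is the step that genuinely crosses between the geometry of the preprojective algebra $\Pi$ and the analytic representation theory of $\GL_n(F)$. The outer two implications are combinatorial and internal to each side of the dictionary, but converting rigidity of a $\Pi$-module orbit into the scalarity of a concrete intertwining operator requires the full power of Lusztig's canonical-basis correspondence, and it is here that the regularity assumption will be indispensable: in the regular case the matching combinatorics of Remark \ref{rem: matching} govern both $\hom_\Pi$ and the generic rank of the intertwining operator simultaneously, whereas beyond the regular case the link is presently unavailable.
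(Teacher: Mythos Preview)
The theorem is quoted from \cite{MR3866895}; the present paper does not give an independent proof, though in \S\ref{sec: endofproof} it reproves the forward implications (balanced $\Rightarrow$ rigid and balanced $\Rightarrow$ $\square$-irreducible) as a byproduct of Theorem~\ref{thm: cor}.

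Your cycle has a genuine gap at ``rigid $\Rightarrow$ $\square$-irreducible''. You write that rigidity ``should propagate'' to scalarity of $\inter_{\pi;\pi}$ and that you ``expect'' the generic rank to equal $\hom_\Pi(\prm_Q(\m),\prm_Q(\m))$, but you offer no mechanism for this. No such identification is available a priori: the passage from vanishing of $\Ext^1_\Pi$ to irreducibility of $\pi\times\pi$ is exactly Conjecture~\ref{conj: GSR}, which is open in general and, in the regular case, is a \emph{consequence} of Theorem~\ref{thm: mainLM} rather than a lemma one may invoke midway. The paper sidesteps this entirely by never isolating that implication. Instead, Lemma~\ref{lem: combreg} produces, for balanced $\m\ne0$, a basic $\sigma$ with $C=C_\sigma*C'$, $\m'$ still balanced, and $C$ commuting with every subcomponent of $C_\sigma$; then Proposition~\ref{prop: indelta} transfers rigidity and $\square$-irreducibility from $C'$ to $C$ \emph{simultaneously} by induction on the size of $\m$. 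Note that $\sigma$ must sometimes be of type $L(\Delta)$ rather than $Z(\Delta)$---the case split in the proof of Lemma~\ref{lem: combreg} is precisely about this---so peeling off a single segment of maximal endpoint, as you propose, does not in general yield the commutativity that the inductive step requires. Your contrapositive for ``$\square$-irreducible $\Rightarrow$ balanced'' also does not go through as written: Lemma~\ref{lem: sqr123} does not give descent of $\square$-irreducibility to an arbitrary sub-multisegment $\m_0$, and there is no reason the complement $\m_1$ can be arranged so that Lemma~\ref{lem: notprec1} applies.
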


We can now state our main result towards Conjecture \ref{conj: subconj}.
\begin{theorem} \label{thm: cor}
Suppose that $C=\prm_Q(\m)$ where $\m$ is a balanced (regular) multisegment.
Then, $C$ is good.
In particular, by Lemma \ref{lem: prod}, every rigid laminated module (see \S\ref{sec: lad}) is good.
\end{theorem}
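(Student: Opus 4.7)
The plan is to induct on the number of segments of $\m$, using the peeling decomposition from the last lemma of \S\ref{sec: examPP} to reduce to a single segment (handled by Corollary \ref{cor: segood}) and a balanced submultisegment (handled by the inductive hypothesis), and then to conclude via Lemma \ref{lem: prod}.

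First I would carry out a symmetry reduction. The involution $\m\mapsto\m^\vee$ merely relabels the vertices of $A_n$, so the class of balanced regular multisegments is preserved under $^\vee$ and hence so is the corresponding class of irreducible components. By Remark \ref{rem: symm}, it then suffices to establish the one-sided embedding $\pi(C*D)\hookrightarrow\pi(C)\times\pi(D)$ for every $D\in\Irrcomp$; the reversed embedding follows automatically.

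The base case $\m=\Delta$ is Corollary \ref{cor: segood}. For the inductive step, since $\m$ is regular, its segments are distinct, and Theorem \ref{thm: mainLM} supplies the rigidity of $C=\prm_Q(\m)$, so the final lemma of \S\ref{sec: examPP} yields a nontrivial decomposition $C=C_1*C_2$ with $C_1=\prm_Q(\m')$, $C_2=\prm_Q(\Delta)$, $\m=\m'+\Delta$, both factors rigid, and $\ext^1_\Pi(C,C_1)=0$. As a submultisegment of a balanced multisegment, $\m'$ is itself balanced, so by Theorem \ref{thm: mainLM} both $\pi(C_1)=Z(\m')$ and $\pi(C_2)=Z(\Delta)$ are $\square$-irreducible; the inductive hypothesis makes $C_1$ good, and Corollary \ref{cor: segood} makes $C_2$ good. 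Provided that $C_1$ and $C_2$ strongly commute, Lemma \ref{lem: prod} then gives that $C=C_1*C_2$ is good, closing the induction.

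The main obstacle is the verification of strong commutativity of $C_1$ and $C_2$. Since $C_2$ is the component of a single segment and is in particular a quasi-lamina, Remark \ref{rem: matching} reduces this to exhibiting both matchings $\mtch(\m',\Delta)$ and $\mtch(\Delta,\m')$ in the bipartite graphs $\grph_{\m';\Delta}$ and $\grph_{\Delta;\m'}$. These matchings will be produced by exploiting the absence of $4231$ and $3412$ submultisegments in $\m$ and the extremal choice of $\Delta$ that already enforces $\ext^1_\Pi(C,C_1)=0$, in the spirit of the combinatorial analysis underlying Theorem \ref{thm: mainLM}; this is precisely the role played by Lemma \ref{lem: combreg} of \S\ref{sec: endofproof}. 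An alternative route is to deduce strong commutativity directly from $\ext^1_\Pi(C,C_1)=0$ by comparing the short exact sequences $0\to x_2\to x\to x_1\to 0$ defining generic points of $C_1*C_2$ with the corresponding direct sums in the closure, using \cite[Corollary 3.3]{2103.12027}. Either way, the delicate point is to translate the vanishing of $\Ext^1$ involving $C$ into $\Ext^1$ vanishing between $C_1$ and $C_2$, and it is this translation that will use the balanced hypothesis in an essential way.
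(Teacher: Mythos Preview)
Your strategy has a genuine gap: the strong commutativity of $C_1=\prm_Q(\m')$ and $C_2=\prm_Q(\Delta)$ that you need for Lemma \ref{lem: prod} is in general \emph{false}, already for the simplest nontrivial balanced multisegment. Take $\m=[1,2]+[2,3]$. Whichever segment you peel off, the two factors are $\prm_Q([1,2])$ and $\prm_Q([2,3])$, and since $[1,2]\prec[2,3]$ we have $\ext^1_\Pi$ nonzero between them (equivalently, $Z([1,2])\times Z([2,3])$ is reducible). So neither the matching criterion of Remark \ref{rem: matching} nor the vanishing $\ext^1_\Pi(C,C_1)=0$ from the last lemma of \S\ref{sec: examPP} can rescue you: the conclusion you are trying to reach is simply not true. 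The condition $\ext^1_\Pi(C,C_1)=0$ concerns the \emph{extension} $C$, not the other factor $C_2$, and does not transfer to $\ext^1_\Pi(C_1,C_2)=0$.

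The paper's proof is built around exactly this difficulty. Rather than asking the two factors to commute, Proposition \ref{prop: indelta} requires only that $C$ itself commute with every subcomponent of the basic component $C_\sigma$; this is what Lemma \ref{lem: combreg} actually proves (note: it is $C$, not $C'$, in its third conclusion). The argument then passes through the $\sigma$-decomposition machinery of \S\ref{sec: Basic} (Jacquet-module bookkeeping via $\hind_\sigma$, Lemma \ref{lem: prodSI}, Proposition \ref{prop: decomDelta}) to propagate goodness from the $\sigma$-reduced piece $C'$ to $C$. Moreover, Lemma \ref{lem: combreg} sometimes takes $\sigma=L([b(\Delta_m),b(\Delta_{l'})])$ rather than $Z(\Delta)$, so the piece being peeled off is not always a single $Z$-segment; your proposed decomposition via the lemma of \S\ref{sec: examPP} does not cover this case. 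In short, Lemma \ref{lem: prod} is too blunt an instrument here; you need the finer Proposition \ref{prop: indelta}.
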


Theorem \ref{thm: cor} considerably extends Theorem 1 of \cite{MR4169050}.
(The latter applies only to rigid laminated modules and proves a weak version of Conjecture \ref{conj: subconj}.)
We will prove Theorem \ref{thm: cor} in \S\ref{sec: endofproof} below
by combining ideas from \cite{MR4169050} and \cite{MR3866895} and pushing them further.

\subsection*{Addendum: primality for regular multisegments\footnote{The results of this subsection will not be used in the rest of the paper.}}

As a complement, we give a (surprisingly?)\ simple necessary and sufficient condition for the property
that $Z(\m)$ is a prime (i.e., irreducible) element in $\BZ$, in the case where $\m$ is regular.

Note that if $Z(\m)$ is prime, then we cannot write non-trivially $Z(\m)=Z(\m_1)\times Z(\m_2)$
(where necessarily $\m=\m_1+\m_2$).
In general, it is not clear whether the converse holds.

We first need an elementary lemma. For this section only, let $A$ be an integral domain.
Let $R=A[x_i,i\in I]$ be the (commutative) ring of polynomials over $A$ in the variables $x_i$, $i\in I$
for some finite nonempty set $I$.
For any subset $J\subset I$ let $x_J\in R$ be the monomial $\prod_{i\in J}x_i$.

\begin{lemma} \label{lem: irredet}
Consider a polynomial $f\in R$ of the form $f=\sum_{J\subset I}c_Jx_J$, $c_j\in A$.

Assume that for any $i\in I$, there exists $J\subset I$ such that $i\in J$ and $c_J\ne0$.

Consider the graph $\Gamma$ whose vertex set is $I$ and we connect $i$ and $j$
if there is no $J\subset I$ such that $i,j\in J$ and $c_J\ne0$.
Assume that $\Gamma$ is connected.

Finally, assume that $\sum_{J\subset I} Ac_J=A$.

Then, $f$ is irreducible in $R$.
\end{lemma}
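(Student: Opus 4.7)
The plan is to show that every factorization $f=gh$ in $R$ is trivial, that is, one of $g,h$ lies in $A^\times$. The key observation is that $f$ is multilinear and $A$ is an integral domain, which forces the set of variables occurring in $g$ and the set occurring in $h$ to form a partition of $I$.

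First I would fix a factorization $f=gh$ and record this partition. For each $i\in I$, the identity $\deg_{x_i}(gh)=\deg_{x_i}g+\deg_{x_i}h$, valid because $A$ is an integral domain, combined with $\deg_{x_i}f\le 1$, forces at most one of $g,h$ to involve $x_i$; by hypothesis~1 (every variable occurs in $f$), exactly one does. Let $I_g$ (resp.\ $I_h$) be the set of variables occurring in $g$ (resp.\ $h$); then $I_g\cap I_h=\emptyset$ and $I_g\cup I_h=I$. Writing $g=\sum_{K\subset I_g}a_K x_K$ and $h=\sum_{L\subset I_h}b_L x_L$, comparing coefficients of $x_J$ in $f=gh$ yields $c_J=a_{J\cap I_g}\,b_{J\cap I_h}$ for every $J\subset I$.

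Next I would dispose of the degenerate case in which one of $I_g,I_h$ is empty, say $I_h=\emptyset$. Then $h\in A$ divides every coefficient $c_J$ of $f$, so the principal ideal $(h)$ of $A$ contains $\sum_J Ac_J=A$ by hypothesis~3. Thus $h\in A^\times$ and the factorization is trivial.

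Finally I would derive a contradiction in the remaining case, in which both $I_g$ and $I_h$ are non-empty. Pick arbitrary $i\in I_g$ and $j\in I_h$. By hypothesis~1 applied to $i$ and the fact that $h$ does not involve $x_i$, there is some $K\subset I_g$ with $i\in K$ and $a_K\ne 0$; symmetrically there is $L\subset I_h$ with $j\in L$ and $b_L\ne 0$. Setting $J=K\sqcup L$, we have $i,j\in J$ and $c_J=a_Kb_L\ne 0$ since $A$ is an integral domain. Hence $\{i,j\}$ is not an edge of $\Gamma$. As $i\in I_g,\ j\in I_h$ were arbitrary, $\Gamma$ admits no edge across the partition $I=I_g\sqcup I_h$, contradicting its connectivity. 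There is really no serious obstacle here: the only step requiring any thought is recognising that multilinearity forces the variable-partition, after which the three hypotheses of the lemma line up precisely with the three case distinctions (factorization of each coefficient; the ``one side empty'' case killed by hypothesis~3; the ``both sides non-empty'' case killed by connectivity of $\Gamma$).
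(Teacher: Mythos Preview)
Your proof is correct and follows essentially the same approach as the paper's: both use that multilinearity over an integral domain forces the variables of $g$ and $h$ to partition $I$, then show that no edge of $\Gamma$ can cross this partition (the paper phrases this as ``if $g$ is independent of $x_i$ then $g$ is independent of $x_j$ for any edge $(i,j)$''), and finally invoke hypothesis~3 to handle the constant factor. Your formulation via the explicit coefficient identity $c_J=a_{J\cap I_g}\,b_{J\cap I_h}$ is slightly more streamlined than the paper's version, but the substance is the same.
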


\begin{proof}
By assumption, the degree of $x_i$ in $f$ is $1$ for every $i\in I$.

Suppose that $f=gh$. Then, for each $i$, precisely one of $g$ and $h$ is independent of $x_i$
(i.e., belongs to $A[x_j,j\ne i]$).

We claim that if $g$ is independent of $x_i$, then $g$ is also independent of $x_j$
for any edge $(i,j)$ in $\Gamma$.

Indeed, write $h=a+bx_i$ where $a,b\in A[x_k,k\ne i]$ and $b\ne0$.
Assume on the contrary that $g$ is dependent on $x_j$.
Then, $h$ is independent of $x_j$ and therefore $a,b\in A[x_k,k\ne i,j]$.
Write $g=c+dx_j$ with $c,d\in A[x_k,k\ne i,j]$ and $d\ne0$.
Then, $f=ac+bcx_i+adx_j+bdx_ix_j$
and therefore $x_ix_j$ appears non-trivially in $f$, in contradiction to the assumption that $(i,j)$ is an edge.

By our assumption on $\Gamma$, it follows that either $g$ or $h$ is a constant, which
must be a unit by our last assumption.
\end{proof}

Let now $\m$ be a multisegment.
We say that $\m$ is \emph{split} if we can write $\m=\m_1+\m_2$ nontrivially, where
for every two segments $\Delta_1$ in $\m_1$ and $\Delta_2$ in $\m_2$ we have
$\Delta_1\not\prec\Delta_2$ and $\Delta_2\not\prec\Delta_1$ (i.e., $Z(\Delta_1)\times Z(\Delta_2)$ is irreducible).

\begin{proposition} \label{prop: regprime}
The following conditions are equivalent for a regular multisegment $\m$.
\begin{enumerate}
\item \label{part: prime} $Z(\m)$ is not a prime element of $\BZ$.
\item \label{part: decomp} $Z(\m)=Z(\m_1)\times Z(\m_2)$ for some nontrivial decomposition $\m=\m_1+\m_2$.
\item \label{part: split} $\m$ is split.
\end{enumerate}
\end{proposition}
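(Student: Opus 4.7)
The plan is to establish the chain $(3)\Rightarrow(2)\Rightarrow(1)\Rightarrow(3)$. For $(3)\Rightarrow(2)$: if $\m = \m_1 + \m_2$ is split, apply Lemma~\ref{lem: notprec1} both to $(\m_1,\m_2)$ and to $(\m_2,\m_1)$ (both hypotheses hold since the cross segments are unlinked in both directions) to conclude that $Z(\m)$ is simultaneously a subrepresentation and a quotient of $Z(\m_1)\times Z(\m_2)$. Combined with the multiplicity-one property of $Z(\m)$ in the Jordan--H\"older series of the induced representation (Remark~\ref{rem: simsub}), the argument used to deduce Theorem~\ref{thm: kkko2}(\ref{part: irrcit}) from its other parts forces $Z(\m_1)\times Z(\m_2) = Z(\m)$. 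For $(2)\Rightarrow(1)$: such a factorization $Z(\m)=Z(\m_1)\cdot Z(\m_2)$ in $\BZ$ is into non-units, since $\BZ$ is a polynomial ring over $\Z$ whose only units are $\pm 1$.

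The substantive direction is $(1)\Rightarrow(3)$, which I would argue by contrapositive: assuming $\m$ is regular and non-split, show $Z(\m)$ is irreducible in $\BZ$. Write $\m = \Delta_1 + \dots + \Delta_k$ with $\Delta_i = [a_i,b_i]$. The cornerstone is to express $Z(\m)$ as a \emph{multilinear} polynomial with $\pm 1$ coefficients in the segment generators $Z([a_i,b_j])$ (for pairs $(i,j)$ with $a_i\le b_j$), which by regularity are distinct algebraically independent generators of a polynomial subring of $\BZ^n$. In the ladder case this is the classical quantum Jacobi--Trudi determinant; in the general regular case I would derive it inductively by removing one segment at a time and using the reduction
\[
Z(\m) \;=\; Z(\Delta)\cdot Z(\m \setminus \Delta) \;-\; (\text{correction}),
\]
noting that the correction terms remain regular on fewer total segments and are themselves multilinear by induction. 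A standard support (grading) argument then reduces irreducibility in $\BZ$ to irreducibility in this polynomial subring, because any factor of $Z(\m)$ can only involve variables occurring in $Z(\m)$.

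Lemma~\ref{lem: irredet} does the remaining work. Its coprimality hypothesis is automatic since $\pm 1$ coefficients already generate the unit ideal, and the ``every variable appears'' hypothesis follows from a short Hall-matching argument exploiting regularity of $\m$. The essential step, where non-splitness enters, is verifying that the complement graph $\Gamma$ --- whose edges pair variables that never co-appear in a non-vanishing monomial --- is connected. Non-splitness is equivalent to connectedness of the linkage graph on $\{\Delta_1,\dots,\Delta_k\}$, so it suffices to show that for any linked pair $(\Delta_i,\Delta_j)$, the diagonal variables $Z([a_i,b_i])$ and $Z([a_j,b_j])$ are connected in $\Gamma$ via an exchange variable $Z([a_i,b_j])$ or $Z([a_j,b_i])$, because the linkage relation obstructs all three from co-occurring in a single valid monomial. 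Propagating along a path in the linkage graph then connects all diagonal variables, and a final routine step links off-diagonal variables back to diagonal ones.

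The main obstacle I anticipate is two-sided: producing the multilinear expression for $Z(\m)$ with enough control over which monomials survive --- in particular tracking the empty-segment degenerations $[a_i,b_j]$ with $a_i = b_j+1$, which contribute trivially as the scalar $1$ but impose combinatorial constraints on the surviving permutations --- and then executing the case-by-case verification of connectedness of $\Gamma$, which interlocks tightly with the precise shape of the formula. A clean proof will likely hinge on choosing a single inductive scheme that delivers both pieces simultaneously.
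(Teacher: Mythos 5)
Your overall strategy agrees with the paper's in its skeleton: the chain $(3)\Rightarrow(2)\Rightarrow(1)$ is easy, and $(1)\Rightarrow(3)$ is handled by contrapositive via Lemma~\ref{lem: irredet} applied to a multilinear expansion of $Z(\m)$, with non-splitness translating into connectedness of the graph $\Gamma$. (For $(3)\Rightarrow(2)$ the paper is terser, but your argument is fine.) The difficulties are all concentrated in the implementation of $(1)\Rightarrow(3)$, and this is where there are genuine gaps.

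First, the paper does not produce an expression with $\pm 1$ coefficients, and there is no reason to believe one exists for general regular $\m$ (the Jacobi--Trudi determinant is special to ladders). What the paper uses is the canonical expansion $Z(\m)=\sum_{\n\in S}c_\n\std(\n)$ obtained by M\"obius-inverting Zelevinsky's decomposition of standard modules; the $c_\n$ are signed Kazhdan--Lusztig polynomial values, not $\pm1$. This is already a multilinear polynomial in the $Z(\Delta)$ variables because every $\n\in S$ is itself regular (a direct consequence of $\m$ being regular), so the monomials $\std(\n)$ are squarefree. Your attempt to force the coefficients to be $\pm 1$ is an unnecessary detour, and your inductive scheme ``remove one segment and add a correction'' is not made precise and is unlikely to produce a controlled answer. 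The coprimality hypothesis of Lemma~\ref{lem: irredet} is already satisfied because $c_\m=1$.

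Second, and more seriously, your connectivity argument for $\Gamma$ assumes without justification that the ``exchange variable'' $Z(\Delta_i\cup\Delta_j)$ appears in the polynomial with nonzero coefficient. This is exactly the hard point. The paper handles it with a dedicated observation: if $\Delta_r\iprec\Delta_s$ (linked with no intermediate segment), the ``neighbor'' multisegment $\n$ obtained by replacing $\Delta_r,\Delta_s$ with $\Delta_r\cup\Delta_s,\Delta_r\cap\Delta_s$ satisfies $c_\n\ne0$, because $Z(\n)$ does not occur in $\std(\m')$ for any $\m'\in S\setminus\{\m,\n\}$, so $c_\n=-c'_\n\ne0$ after inversion. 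Without something of this nature, the edges of $\Gamma$ that are supposed to propagate along the linkage graph may not exist --- the variables they rely on could simply be absent from $I$. Your proposal acknowledges having to ``track the empty-segment degenerations'' and to do a case-by-case check, but as written this central input is missing, and the deduction that $\Gamma$ is connected does not go through. Once the neighbor observation is in hand, the connectivity check reduces (as in the paper) to noting that any two segments in $I$ sharing a beginning or an end form an edge in $\Gamma$ (because all $\n\in S$ are regular), and the rest is the routine propagation you describe.
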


\begin{proof}
Clearly, \ref{part: split}$\implies$\ref{part: decomp}$\implies$\ref{part: prime}.
(These implications hold without the assumption that $\m$ is regular.)

It remains to prove that if $\m$ is not split (but regular), then $Z(\m)$ is prime in $\BZ$.

Write $\m=\Delta_1+\dots+\Delta_k$ and
\begin{equation} \label{eq: decs}
\std(\m)=\sum_{\n\in S}c'_\n Z(\n),
\end{equation}
(see \eqref{eq: std}) where $c'_\n$ are positive integers for all $\n$. We have $c'_\m=1$, and the set $S$
was described explicitly by Zelevinsky \cite[\S7]{MR584084}.
Denote by $I'$ the set of all segments that occur in a multisegment in $S$.
In particular, $I'\supset\{\Delta_1,\dots,\Delta_k\}$.
The following facts follow easily from Zelevinsky's description.
\begin{enumerate}
\item Every $\n\in S$ is regular.
\item For every $\Delta\in I'$ we have $b(\Delta)\in\{b(\Delta_1),\dots,b(\Delta_k)\}$ and
$e(\Delta)\in\{e(\Delta_1),\dots,e(\Delta_k)\}$.
\item Suppose that $\Delta_r\prec\Delta_s$, but there does not exist $t$ such that
$\Delta_r\prec\Delta_t$ and $\Delta_t\prec\Delta_r$. (For brevity, we will write $\Delta_r\iprec\Delta_s$ in this case.)
Let $\n$ be the multisegment obtained from $\m$ by replacing $\Delta_r$ and $\Delta_s$
by $\Delta_r\cup\Delta_s$ and $\Delta_r\cap\Delta_s$ (or just $\Delta_r\cup\Delta_s$ if $\Delta_r\cap\Delta_s=\emptyset$).
We will say that $\n$ is a neighbor of $\m$.
Then, $\n\in S$, and in particular $\Delta_r\cup\Delta_s\in I'$. Moreover, for any $\m'\in S\setminus\{\m,\n\}$,
the representation $Z(\n)$ does not occur in $\std(\m')$.
\end{enumerate}
Inverting the relation \eqref{eq: decs} (and its analogues for $\std(\n)$, for any $\n\in S$) we get
\begin{equation} \label{eq: decZ}
Z(\m)=\sum_{\n\in S}c_\n\std(\n)
\end{equation}
where $c_\n$ are integers with $c_\m=1$.
The only other pertinent piece of information for us is that $c_\n=-c'_\n\ne0$ for every neighbor $\n$ of $\m$.
(In fact, one can show that $c'_\n=1$ in this case.)\footnote{In general, the $c'_\n$'s (and hence also the $c_\n$'s, up to explicit signs) are given
by the value at $1$ of certain Kazhdan--Lusztig polynomials with respect to the
symmetric group $S_k$ -- see e.g. \cite{MR2320806}.
In particular, $c_\n\ne0$ for all $n\in S$. However, we will not use this fact.}

Let $I$ be the set of segments that occur in a multisegment $\n\in S$ for which $c_\n\ne0$.
Thus, $\{\Delta_1,\dots,\Delta_k\}\subset I\subset I'$ and if $\Delta_r\iprec\Delta_s$, then
$\Delta_r\cup\Delta_s\in I$. (In reality, $I=I'$.)
Consider the free variables $Z(\Delta)$, $\Delta\in I$.
The relation \eqref{eq: decZ} expresses $f=Z(\m)\in\BZ$ as a sum
of monomials in $Z(\Delta)$, $\Delta\in I$ (corresponding to $\std(\n)$) with coefficients $c_\n$.
Consider the graph $\Gamma$ defined in Lemma \ref{lem: irredet}.

Let $\Delta\in I$. If $r$ is such that $b(\Delta)=b(\Delta_r)$,
then $(\Delta,\Delta_r)$ is an edge in $\Gamma$ (since every $\n\in S$ is regular).
Similarly, if $s$ is such that $e(\Delta)=e(\Delta_s)$,
then $(\Delta,\Delta_s)$ is an edge in $\Gamma$.

Suppose that $\Delta_r\iprec\Delta_s$.
Then, $\Delta_r\cup\Delta_s\in I$ and both $(\Delta_r,\Delta_r\cup\Delta_s)$ and $(\Delta_r\cup\Delta_s,\Delta_s)$
are edges in $\Gamma$ (by the previous paragraph).

On the other hand, the fact that $\m$ is not split means that
the graph whose vertices are $\{1,\dots,k\}$ and whose edges are given by $\Delta_r\iprec\Delta_s$, is connected.
It follows that $\Gamma$ is connected.
The primality of $Z(\m)$ in the subring of $\BZ$ generated by $Z(\Delta)$, $\Delta\in I$
(and hence, the primality of $Z(\m)$ in $\BZ$ itself), follows from Lemma \ref{lem: irredet} (for $A=\Z$).
\end{proof}

\begin{remark}
Proposition \ref{prop: regprime} ceases to hold if we only assume that all the segments in $\m$ occur with multiplicity one.
For instance, consider $\m=\m_1+\m_2$ where
\[
\m_1=[1,2]+[2,3]=\Delta_1+\Delta_2,\ \ \m_2=[3,3]=\Delta_3.
\]
Then, $Z(\m)=Z(\m_1)\times Z(\m_2)$ even though $\m$ is not split since $\Delta_1\prec\Delta_2$ and $\Delta_1\prec\Delta_3$.
\end{remark}

\section{Basic representations and irreducible components} \label{sec: Basic}

In this section we prove an analogue of a special case of the Baumann--Kamnitzer--Tingley decomposition \cite{MR3270589}
for the representation theory of $\GL$.

\subsection{} \label{sec: defbasic}
By definition, a \emph{basic} representation is one of the form $\sigma=Z(\Delta)$ or $\sigma=L(\Delta)$ for some segment $\Delta$.\footnote{The discussion of this subsection and the next one is valid for any segment
in the sense of Zelevinsky.}

Let $\sigma $ be a basic representation. \label{sec: basicdef}
Depending on whether $\sigma=Z(\Delta)$ or $\sigma=L(\Delta)$, we say that a basic representation is $\sigma$-saturated
if it is of the form $Z(\Delta')$ (resp., $L(\Delta')$) where $\Delta'\subset\Delta$ and $e(\Delta')=e(\Delta)$ (resp., $b(\Delta')=b(\Delta)$).
We denote by $\Irrsatb$ the set of basic $\sigma$-saturated representations.

Note that in both cases, if $\sigma_1,\dots,\sigma_k\in\Irrsatb$, then $\sigma_1\times\dots\times\sigma_k$ is $\square$-irreducible.
Such a product (possibly with $k=0$) will be called a $\sigma$-saturated representation.
We denote by $\Irrsat$ the set of $\sigma$-saturated representations.
Any $\sigma'\in\Irrsat$ can be written uniquely, up to reordering of the factors, as $\sigma_1\times\dots\times\sigma_k$ where $\sigma_i\in\Irrsatb$.
We will call $k$ the $\sigma$-index of $\sigma'$.

More generally, let $\pi\in\Irr$.
Depending on whether $\sigma=Z(\Delta)$ or $\sigma=L(\Delta)$, we define
the $\sigma$-index of $\pi$, denoted by $\indx_\sigma(\pi)$, to be the multiplicity of $e(\Delta)$ (resp., $b(\Delta)$)
in the cuspidal support of $\pi$ if the latter is contained (as a set) in $\Delta$, and $-\infty$ otherwise.
This definition is consistent with the case of $\sigma$-saturated representations.

For instance, if $\pi=Z(\Delta_1+\dots+\Delta_k)$ with $\Delta_i\subset\Delta$ for all $i$, then the $Z(\Delta)$-index
of $\pi$ is the number of $i$'s such that $e(\Delta_i)=e(\Delta)$.
Similarly, if $\pi=L(\Delta_1+\dots+\Delta_k)$ with $\Delta_i\subset\Delta$ for all $i$, then the $L(\Delta)$-index
of $\pi$ is the number of $i$'s such that $b(\Delta_i)=b(\Delta)$.

In particular, if $\indx_\sigma(\pi)>0$, then there exist $\sigma'\in\Irrsatb$ and $\pi'\in\Irr$ such that
\begin{equation} \label{eq: ind>0}
\pi=\soc(\sigma'\times\pi').
\end{equation}
(Namely, if $\sigma=Z(\Delta)$ and $\pi=Z(\Delta_1+\dots+\Delta_k)$, then we may take $\sigma'=Z(\Delta_i)$
for any $i$ such that $e(\Delta_i)=e(\Delta)$; if $\sigma=L(\Delta)$ and $\pi=L(\Delta_1+\dots+\Delta_k)$, then we may take $\sigma'=L(\Delta_i)$ for any $i$ such that $b(\Delta_i)=b(\Delta)$.)

It is also clear that if  $\pi_1, \pi_2\in\Irr$, then for any irreducible subquotient $\tau$ of $\pi_1\times\pi_2$ we have
\begin{equation} \label{eq: addind}
\indx_\sigma(\tau)=\indx_\sigma(\pi_1)+\indx_\sigma(\pi_2).
\end{equation}

Denote by\footnote{Here $\boxtimes$ denotes Deligne's tensor product of categories -- see e.g.\ \cite[\S1.11]{MR3242743}}
\[
J:\Reps\rightarrow\Reps\boxtimes\Reps
\]
the ``total'' Jacquet module, i.e., $J$ is the direct sum over $m\ge0$ of the functors
\begin{align*}
\Reps(\GL_m(F))\rightarrow&\oplus_{m_1,m_2\ge0:m_1+m_2=m}\Reps(\GL_{m_1}(F)\times\GL_{m_2}(F))\\=&
\oplus_{m_1,m_2\ge0:m_1+m_2=m}\Reps(\GL_{m_1}(F))\boxtimes\Reps(\GL_{m_2}(F))
\end{align*}
where for each summand we take the Jacquet module with respect to the standard parabolic subgroup of type
$(m_1,m_2)$ of $\GL_m(F)$.

Note that if $\sigma'\in\Irrsatb$, then for every irreducible subquotient $\pi_1\otimes\pi_2$ of $J(\sigma')$
other than $\sigma'\otimes\one$ we have $\indx_\sigma(\pi_1)=0$.
More generally, it follows from the geometric lemma of Bernstein and Zelevinsky that if $\sigma'\in\Irrsat$, then for every irreducible subquotient $\pi_1\otimes\pi_2$ of $J(\sigma')$
other than $\sigma'\otimes\one$ we have
\begin{equation}\label{eq: satmin2}
\indx_\sigma(\pi_1)<\indx_\sigma(\sigma').
\end{equation}

\subsection{} \label{sec: basic2}
For any $\pi\in\Reps$ (not necessarily irreducible) we define the hereditary $\sigma$-index by
\[
\hind_\sigma(\pi)=\max\{\indx_\sigma(\pi_1)\mid\pi_1\otimes\pi_2\text{ is an irreducible subquotient of }J(\pi)\},
\]
interpreted as $0$ if $\pi=0$.
We say that $\pi$ is $\sigma$-reduced if its hereditary $\sigma$-index is $0$.
We denote by $\Repsred\subset\Reps$ the subcategory of $\sigma$-reduced representations and by $\Irred\subset\Irr$ its irreducible objects.

Clearly, $\hind_\sigma(\pi)\ge\max(0,\indx_\sigma(\pi))$ for any $\pi\in\Irr$, and
$\hind_\sigma(\pi_1)\le\hind_\sigma(\pi_2)$ if $\pi_1$ is a subquotient of $\pi_2$.

It follows from \eqref{eq: addind} and the geometric lemma that for any $\pi_1,\pi_2\in\Reps$ we have
\begin{equation} \label{eq: hindadd}
\hind_\sigma(\pi_1\times\pi_2)=\hind_\sigma(\pi_1)+\hind_\sigma(\pi_2).
\end{equation}
In particular,
\begin{equation} \label{eq: prodred}
\text{if $\pi_1$ and $\pi_2$ are $\sigma$-reduced, then $\pi_1\times\pi_2$ is also $\sigma$-reduced.}
\end{equation}

Also, it follows from \eqref{eq: satmin2} that if $\sigma'\in\Irrsat$,
then for every irreducible subquotient $\pi_1\otimes\pi_2$ of $J(\sigma')$ other than $\sigma'\otimes\one$ we have
\begin{equation}\label{eq: satmin}
\hind_\sigma(\pi_1)<\indx_\sigma(\sigma').
\end{equation}
In particular,
\begin{equation} \label{eq: hindsat}
\hind_\sigma(\sigma')=\indx_\sigma(\sigma').
\end{equation}

We say that two representations $\pi_1, \pi_2\in\Reps$ have \emph{disjoint supports} if for every irreducible subquotient
$\tau_i$ of $\pi_i$, $i=1,2$ the cuspidal support of $\tau_1$ and $\tau_2$ are distinct as multisets.
We use similar terminology for representations of $\GL\times\GL$.

\label{sec: fork}

For any $\pi_1,\pi_2\in\Reps$ we denote by $\KJ_{\pi_1,\pi_2}$ the kernel of the canonical surjection
\[
J(\pi_1\times\pi_2)\twoheadrightarrow\pi_1\otimes\pi_2.
\]
Following \cite[\S7.2]{MR4169050} we write $\pi_1\sprt\pi_2$ if $\KJ_{\pi_1,\pi_2}$ and $\pi_1\otimes\pi_2$ have disjoint supports (as representations of $\GL\times\GL$).

By the geometric lemma, \eqref{eq: satmin} and \cite[Corollary 5]{MR4169050} we get

\begin{lemma} \label{lem: prodSI}
Let $\pi_1\in\Irrsat$ and $\pi_2\in\Repsred$.
Then, for any irreducible subquotient $\tau_1\otimes\tau_2$ of $\KJ_{\pi_1,\pi_2}$ we have
\[
\hind_\sigma(\tau_1)<\indx_\sigma(\pi_1)\text{ or }\deg\tau_1>\deg\pi_1.
\]
In particular, $\pi_1\sprt\pi_2$. Hence, if $\pi_2$ is SI, then $\pi_1\times\pi_2$ is SI.
\end{lemma}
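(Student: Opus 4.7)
My plan is to apply the Bernstein--Zelevinsky geometric lemma to $J(\pi_1\times\pi_2)$ and keep track of the $\sigma$-index along each stratum of the resulting filtration. Writing $m_i=\deg\pi_i$, any irreducible subquotient $\tau_1\otimes\tau_2$ of $J(\pi_1\times\pi_2)$ arises from a choice of irreducible subquotients $a_1\otimes b_1$ of $J(\pi_1)$ and $a_2\otimes b_2$ of $J(\pi_2)$, with $\tau_1$ a subquotient of $a_1\times a_2$ and $\tau_2$ of $b_1\times b_2$. The unique ``diagonal'' datum $(a_1,b_1,a_2,b_2)=(\pi_1,\one,\one,\pi_2)$ produces the quotient $\pi_1\otimes\pi_2$; every other datum contributes only to $\KJ_{\pi_1,\pi_2}$. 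So I fix such a non-diagonal datum and split into two cases according to whether $\deg\tau_1>m_1$ (which gives the second alternative at once) or $\deg\tau_1\le m_1$.

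In the second case the degree accounting $\deg a_1+\deg a_2=\deg\tau_1\le m_1=\deg a_1+\deg b_1$ rules out $\deg a_1=m_1$: that would force $b_1=\one$, hence $a_1=\pi_1$ (irreducible since $\pi_1\in\Irrsat$), then $a_2=\one$ and $b_2=\pi_2$, i.e.\ the diagonal case. Thus $\deg a_1<m_1$, so $a_1\otimes b_1$ is a non-diagonal irreducible subquotient of $J(\pi_1)$, and \eqref{eq: satmin} applied to $\pi_1\in\Irrsat$ yields $\hind_\sigma(a_1)<\indx_\sigma(\pi_1)$. On the other side, $\pi_2\in\Repsred$ together with transitivity of the Jacquet functor --- equivalently, the observation that $\hind_\sigma(\pi_2)=0$ forbids $e(\Delta)$ (respectively $b(\Delta)$) from appearing in the cuspidal support of $\pi_2$ --- shows that $a_2$ is itself $\sigma$-reduced, so $\hind_\sigma(a_2)=0$. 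Then \eqref{eq: hindadd} and monotonicity of $\hind_\sigma$ under taking subquotients give
\[
\hind_\sigma(\tau_1)\le\hind_\sigma(a_1\times a_2)=\hind_\sigma(a_1)+\hind_\sigma(a_2)<\indx_\sigma(\pi_1),
\]
the first alternative.

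For the ``in particular'' statements I would use that $\indx_\sigma$ is a cuspidal-support invariant: either $\deg\tau_1>\deg\pi_1$ (so the cuspidal supports automatically differ as multisets) or $\indx_\sigma(\tau_1)\le\hind_\sigma(\tau_1)<\indx_\sigma(\pi_1)$ (so again they differ as multisets). Hence $(\mathrm{cs}(\tau_1),\mathrm{cs}(\tau_2))$ cannot coincide with $(\mathrm{cs}(\pi_1),\mathrm{cs}(\pi_2))$, which is the defining condition for $\pi_1\sprt\pi_2$; the SI conclusion then follows from the cited \cite[Corollary 5]{MR4169050}. The main place where I expect care to be needed is the case analysis ruling out $\deg a_1=m_1$ and the inheritance of $\sigma$-reducedness by $a_2$; beyond that, the proof is a clean combination of \eqref{eq: satmin}, \eqref{eq: hindadd}, and monotonicity of $\hind_\sigma$.
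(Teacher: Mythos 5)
Your proof is correct and follows the same route the paper sketches (geometric lemma plus \eqref{eq: satmin} plus \cite[Corollary 5]{MR4169050}); the paper gives only a one-line citation, and you have supplied the bookkeeping behind it, including the key observation that the monotonicity of $\hind_\sigma$ together with \eqref{eq: hindadd} propagates the bound $\hind_\sigma(a_1)<\indx_\sigma(\pi_1)$ from \eqref{eq: satmin} to $\tau_1$. One caveat: the parenthetical ``equivalently, $\hind_\sigma(\pi_2)=0$ forbids $e(\Delta)$ (resp.\ $b(\Delta)$) from appearing in the cuspidal support of $\pi_2$'' is not correct --- for instance $L([e(\Delta),e(\Delta)+1])$ is $Z(\Delta)$-reduced when $\Delta=[e(\Delta),e(\Delta)]$, yet $e(\Delta)$ lies in its cuspidal support. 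Fortunately you don't need that claim: the transitivity-of-Jacquet-module argument you give first already yields $\hind_\sigma(a_2)\le\hind_\sigma(\pi_2)=0$, which is what the estimate requires. I would simply delete the ``equivalently'' clause.
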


Fix a basic representation $\sigma$.

\begin{lemma} \label{lem: red}
The following conditions are equivalent for $\pi\in\Reps$.
\begin{enumerate}
\item \label{part: hind>0} $\hind_\sigma(\pi)>0$.
\item \label{part: Jind>0} $J(\pi)$ admits an irreducible subquotient of the form $\sigma'\otimes\pi'$ with $\indx_\sigma(\sigma')>0$.
\item \label{part: Jbasicsq} $J(\pi)$ admits an irreducible subquotient of the form $\sigma'\otimes\pi'$ where $\sigma'\in\Irrsatb$.
\item \label{part: Kqbasic} $J(\pi)$ admits an irreducible quotient of the form $\sigma'\otimes\pi'$ where $\sigma'\in\Irrsatb$.
\item \label{part: embedbasic} There exists a non-zero homomorphism $\pi\rightarrow\sigma'\times\pi'$ for some $\sigma'\in\Irrsatb$ and $\pi'\in\Irr$.
\end{enumerate}
In particular, if $\pi\in\Irr$, then $\pi$ is $\sigma$-reduced if and only if
\[
\pi\not\hookrightarrow\sigma'\times\pi'
\]
for every $\sigma'\in\Irrsatb$ and $\pi'\in\Irr$.
\end{lemma}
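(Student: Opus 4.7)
The equivalences $\ref{part: hind>0}\iff\ref{part: Jind>0}$ and $\ref{part: Kqbasic}\iff\ref{part: embedbasic}$ are formal. The first is immediate from the definition of $\hind_\sigma$ as a maximum: it is positive iff some irreducible subquotient $\pi_1\otimes\pi_2$ of $J(\pi)$ has $\indx_\sigma(\pi_1)>0$. The second is Frobenius reciprocity (second adjointness), identifying $\Hom(\pi,\sigma'\times\pi')$ with $\Hom(J(\pi),\sigma'\otimes\pi')$, a non-zero element of which is precisely a surjection $J(\pi)\twoheadrightarrow\sigma'\otimes\pi'$. In the remaining loop, $\ref{part: Kqbasic}\Rightarrow\ref{part: Jbasicsq}$ is trivial (quotients are subquotients) and $\ref{part: Jbasicsq}\Rightarrow\ref{part: Jind>0}$ follows from the direct check that $\indx_\sigma(\sigma')=1$ for every $\sigma'\in\Irrsatb$.

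For $\ref{part: Jind>0}\Rightarrow\ref{part: Jbasicsq}$: given an irreducible subquotient $\pi_1\otimes\pi_2$ of $J(\pi)$ with $\indx_\sigma(\pi_1)>0$, apply~\eqref{eq: ind>0} to embed $\pi_1\hookrightarrow\sigma''\times\tau$ for some $\sigma''\in\Irrsatb$ and $\tau\in\Irr$. By Frobenius, $\sigma''\otimes\tau$ is an irreducible quotient—hence subquotient—of $J(\pi_1)$. Transitivity of the Jacquet functor then places $\sigma''\otimes\tau\otimes\pi_2$ as an irreducible subquotient of the iterated Jacquet module $J_{(\deg\sigma'',\deg\tau,\deg\pi_2)}(\pi)$. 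Reorganizing this via transitivity in the opposite direction as $(\id\otimes J_{(\deg\tau,\deg\pi_2)})\circ J_{(\deg\sigma'',\deg\tau+\deg\pi_2)}(\pi)$ forces the existence of an irreducible subquotient $\sigma''\otimes\kappa$ of $J(\pi)$ with $\tau\otimes\pi_2$ a subquotient of $J(\kappa)$, yielding~\ref{part: Jbasicsq}.

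The principal obstacle is $\ref{part: Jbasicsq}\Rightarrow\ref{part: Kqbasic}$: upgrading an irreducible subquotient of $J(\pi)$ with first factor in $\Irrsatb$ to an actual irreducible quotient of the same shape. My plan is an induction on the length of $\pi$ using exactness of $J$. For the induction step, a short exact sequence $0\to\pi_0\to\pi\to\pi_1\to0$ gives $0\to J(\pi_0)\to J(\pi)\to J(\pi_1)\to0$; when the given subquotient descends to $J(\pi_1)$, the inductive hypothesis produces a quotient there that lifts through $J(\pi)\twoheadrightarrow J(\pi_1)$, so the argument iterates by selecting $\pi_1$ to be a suitable irreducible quotient of $\pi$ until the irreducible case is reached. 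The base case $\pi=Z(\m)$ (with the $L(\m)$ case symmetric) is the delicate heart of the proof: I plan to combine Zelevinsky's explicit description of $J(Z(\m))$ with a cancellation-style argument in the spirit of Theorem~\ref{thm: cancel} to produce a direct embedding $\pi\hookrightarrow\sigma''\times\pi'$ for some $\sigma''\in\Irrsatb$. A subtlety is that a segment $\Delta'$ witnessing~\ref{part: Jbasicsq} need not be maximal in Zelevinsky's partial order on $\m$, so the embedding cannot simply be read off from a permissible reordering of $\std(\m)$; isolating the correct $\sigma''$ requires analyzing the left adjoint of parabolic induction by each element of $\Irrsatb$ and exploiting the basic-saturated structure.
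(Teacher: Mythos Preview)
Your handling of the formal equivalences $\ref{part: hind>0}\iff\ref{part: Jind>0}$, $\ref{part: Kqbasic}\iff\ref{part: embedbasic}$, and the trivial implications $\ref{part: Kqbasic}\Rightarrow\ref{part: Jbasicsq}\Rightarrow\ref{part: Jind>0}$ is fine and matches the paper. Your argument for $\ref{part: Jind>0}\Rightarrow\ref{part: Jbasicsq}$ via transitivity of the Jacquet functor is correct, though the paper does not need this step at all.

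The gap is in your plan for $\ref{part: Jbasicsq}\Rightarrow\ref{part: Kqbasic}$. The induction on length does not close: in the short exact sequence $0\to\pi_0\to\pi\to\pi_1\to 0$, if the given irreducible subquotient of $J(\pi)$ lies in $J(\pi_0)$ rather than $J(\pi_1)$, the inductive hypothesis yields only a quotient of $J(\pi_0)$, which need not be a quotient of $J(\pi)$. Your phrase ``selecting $\pi_1$ to be a suitable irreducible quotient'' hides exactly this problem, and the deferred base case (explicit Zelevinsky analysis plus a cancellation argument) is left as a vague plan rather than a proof.

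The paper bypasses all of this with a single observation you are missing: $\indx_\sigma(\tau)$ depends only on the cuspidal support of $\tau$. Since $J(\pi)$ decomposes as a direct sum according to cuspidal support, any irreducible \emph{subquotient} $\tau\otimes\omega$ of $J(\pi)$ with $\indx_\sigma(\tau)>0$ may be replaced by an irreducible \emph{quotient} $\tau'\otimes\omega'$ with the same cuspidal support, hence still $\indx_\sigma(\tau')>0$. Frobenius then gives a nonzero map $\pi\to\tau'\times\omega'$; applying \eqref{eq: ind>0} to $\tau'$ embeds $\tau'\hookrightarrow\sigma'\times\tau''$ with $\sigma'\in\Irrsatb$, so $\pi\to\sigma'\times\tau''\times\omega'$ is nonzero, and hence $\pi\to\sigma'\times\pi'$ is nonzero for some irreducible subquotient $\pi'$ of $\tau''\times\omega'$. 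This proves $\ref{part: Jind>0}\Rightarrow\ref{part: embedbasic}$ directly in a few lines, with no induction and no explicit structure theory of $Z(\m)$.
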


\begin{proof}
\ref{part: hind>0}$\iff$\ref{part: Jind>0} by definition of $\hind$.

\ref{part: Kqbasic}$\implies$\ref{part: Jbasicsq}$\implies$\ref{part: Jind>0} is obvious.

Conditions \ref{part: Kqbasic} and \ref{part: embedbasic} are equivalent by Frobenius reciprocity.

Finally, suppose that $\tau\otimes\omega$ is an irreducible subquotient of $J(\pi)$ such that $\indx_\sigma(\tau)>0$.
Since $\indx_\sigma(\tau)$ depends only on the cuspidal support of $\tau$, we may assume without loss of generality that $\tau\otimes\omega$ is a quotient of $J(\pi)$.
Thus, by Frobenius reciprocity there exists a non-zero homomorphism $\pi\rightarrow\tau\times\omega$.
Hence, by \eqref{eq: ind>0}, there exists a non-zero homomorphism $\pi\rightarrow\sigma'\times\tau'\times\omega$ for some
$\sigma'\in\Irrsatb$ and $\tau'\in\Irr$.
Hence, there exists a non-zero homomorphism $\pi\rightarrow\sigma'\times\pi'$ for some irreducible subquotient $\pi'$ of $\tau'\times\omega$.
Thus, \ref{part: Jind>0}$\implies$\ref{part: embedbasic}.
\end{proof}

\begin{proposition} \label{prop: decomDelta}
Let $\pi$ be an irreducible representation of $\GL$. Then, there exist $\pi_1\in\Irrsat$ and $\pi_2\in\Irred$ such that
\begin{equation} \label{eq: sigmadecomp}
\pi=\soc(\pi_1\times\pi_2).
\end{equation}
Moreover, $\pi_1$ and $\pi_2$ are uniquely determined by these conditions.
More precisely,
\begin{enumerate}
\item \label{part: hind2} $\hind_\sigma(\pi)=\hind_\sigma(\pi_1)=\indx_\sigma(\pi_1)$.
\item \label{part: uniqueJ} $\pi_1\otimes\pi_2$ occurs with multiplicity one in the Jordan--H\"older sequence of $J(\pi)$.
\item \label{part: onlykd} If $\pi'_1\otimes\pi'_2$ is an irreducible subquotient of $J(\pi)$ such that $\hind_\sigma(\pi'_1)=\hind_\sigma(\pi)$,
then either $\pi'_1\otimes\pi'_2=\pi_1\otimes\pi_2$ or $\deg\pi'_1>\deg\pi_1$.
\end{enumerate}
Thus, $\pi_1\otimes\pi_2$ is the unique irreducible subquotient of $J(\pi)$ with $\deg\pi_1$ minimal such that
$\hind_\sigma(\pi_1)=\hind_\sigma(\pi)$.
\end{proposition}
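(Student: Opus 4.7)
The plan is to prove existence by induction on $\deg \pi$ and then deduce the finer properties (and uniqueness) from Lemma \ref{lem: prodSI} together with the disjointness of supports in the Jacquet module.

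For existence, if $\pi \in \Irred$ there is nothing to do: take $\pi_1 = \one$ and $\pi_2 = \pi$. Otherwise, by the last statement of Lemma \ref{lem: red}, there is an embedding $\pi \hookrightarrow \sigma' \times \pi'$ with $\sigma' \in \Irrsatb$ and $\pi' \in \Irr$; in particular $\deg \pi' < \deg \pi$. By induction, $\pi' = \soc(\pi'_1 \times \pi'_2)$ for some $\pi'_1 \in \Irrsat$ and $\pi'_2 \in \Irred$. I will set $\pi_1 = \sigma' \times \pi'_1$ (which lies in $\Irrsat$, since products of basic $\sigma$-saturated representations are again $\sigma$-saturated) and $\pi_2 = \pi'_2$. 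The embedding $\pi' \hookrightarrow \pi'_1 \times \pi'_2$ yields $\sigma' \times \pi' \hookrightarrow \sigma' \times \pi'_1 \times \pi'_2 = \pi_1 \times \pi_2$, and hence $\pi \hookrightarrow \pi_1 \times \pi_2$. Since $\pi_1 \in \Irrsat$ is $\square$-irreducible and $\pi_2 \in \Irred$, Lemma \ref{lem: prodSI} tells us that $\pi_1 \times \pi_2$ is SI, so its socle is irreducible. Thus $\pi = \soc(\pi_1 \times \pi_2)$, as required.

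For the extra properties, I would exploit the fact that $\pi_1 \sprt \pi_2$ (again by Lemma \ref{lem: prodSI}), which forces the canonical surjection $J(\pi_1 \times \pi_2) \twoheadrightarrow \pi_1 \otimes \pi_2$ to split by central character considerations, yielding
\[
J(\pi_1 \times \pi_2) = \KJ_{\pi_1,\pi_2} \oplus (\pi_1 \otimes \pi_2),
\]
with $\pi_1 \otimes \pi_2$ not appearing in $\KJ_{\pi_1,\pi_2}$. Since $J$ is exact, $J(\pi) \hookrightarrow J(\pi_1 \times \pi_2)$, while Frobenius reciprocity applied to $\pi \hookrightarrow \pi_1 \times \pi_2$ produces a non-zero quotient of $J(\pi)$ isomorphic to $\pi_1 \otimes \pi_2$. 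This gives part \ref{part: uniqueJ} directly. For part \ref{part: hind2}, the identity $\hind_\sigma(\pi_1) = \indx_\sigma(\pi_1)$ is \eqref{eq: hindsat}; the upper bound $\hind_\sigma(\pi) \le \hind_\sigma(\pi_1 \times \pi_2) = \indx_\sigma(\pi_1)$ follows from the embedding and \eqref{eq: hindadd}, while the matching lower bound follows from $\pi_1 \otimes \pi_2$ appearing in $J(\pi)$. Part \ref{part: onlykd} is then immediate: any irreducible subquotient $\pi'_1 \otimes \pi'_2$ of $J(\pi)$ is a subquotient of either summand; if it lies in $\KJ_{\pi_1,\pi_2}$, Lemma \ref{lem: prodSI} forces $\hind_\sigma(\pi'_1) < \indx_\sigma(\pi_1) = \hind_\sigma(\pi)$ or $\deg \pi'_1 > \deg \pi_1$, and under the hypothesis $\hind_\sigma(\pi'_1) = \hind_\sigma(\pi)$ only the latter survives.

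Uniqueness of the pair $(\pi_1, \pi_2)$ then comes for free: any other candidate pair $(\tilde\pi_1, \tilde\pi_2) \in \Irrsat \times \Irred$ with $\pi = \soc(\tilde\pi_1 \times \tilde\pi_2)$ satisfies, by the argument above, that $\tilde\pi_1 \otimes \tilde\pi_2$ is a subquotient of $J(\pi)$ with $\hind_\sigma(\tilde\pi_1) = \hind_\sigma(\pi)$, and the minimality characterization from part \ref{part: onlykd} forces $\tilde\pi_1 \otimes \tilde\pi_2 = \pi_1 \otimes \pi_2$. The main obstacle I foresee is the bookkeeping in the existence step, specifically verifying that the iterated construction stays inside $\Irrsat$ and that the socle commutes suitably with the successive inductions; once the splitting $J(\pi_1 \times \pi_2) = \KJ_{\pi_1,\pi_2} \oplus (\pi_1 \otimes \pi_2)$ is in hand, the properties and uniqueness drop out routinely.
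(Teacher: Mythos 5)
Your proof is correct and follows essentially the same route as the paper: existence by induction on $\deg\pi$ via Lemma~\ref{lem: red}, and the finer properties and uniqueness extracted from Lemma~\ref{lem: prodSI} via Frobenius reciprocity and the exactness of $J$. The invocation of the splitting $J(\pi_1\times\pi_2)=\KJ_{\pi_1,\pi_2}\oplus(\pi_1\otimes\pi_2)$ is a valid but slightly heavier move than needed---the disjointness of supports already gives multiplicity one of $\pi_1\otimes\pi_2$ in $J(\pi_1\times\pi_2)$ directly, without passing to a direct-sum decomposition---but it does no harm.
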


We will refer to \eqref{eq: sigmadecomp} as the \emph{$\sigma$-decomposition} of $\pi$.

\begin{proof}
The existence follows from Lemma \ref{lem: red} by induction on the degree of $\pi$.
Suppose that \eqref{eq: sigmadecomp} holds with $\pi_1\in\Irrsat$ and $\pi_2\in\Irred$.
Then, $\pi_1\otimes\pi_2$ occurs as a quotient of $J(\pi)$ and hence,
\begin{gather*}
\indx_\sigma(\pi_1)\le\hind_\sigma(\pi)\le
\hind_\sigma(\pi_1\times\pi_2)\stackrel{\eqref{eq: hindadd}}=\hind_\sigma(\pi_1)+\hind_\sigma(\pi_2)\\=
\hind_\sigma(\pi_1)\stackrel{\eqref{eq: hindsat}}=\indx_\sigma(\pi_1),
\end{gather*}
which implies the first part. The other parts follow from Lemma \ref{lem: prodSI}.
\end{proof}

\begin{lemma}
Let $\pi_1$, $\pi_2$ be irreducible representations with $\sigma$-decompositions
\[
\pi_i=\soc(\sigma_i\times\tau_i),\ i=1,2.
\]
If $\pi_1\times\pi_2$ is irreducible, then its $\sigma$-decomposition is
\[
\pi_1\times\pi_2=\soc(\sigma'\times\tau')\text{ where }\sigma'=\sigma_1\times\sigma_2\text{ and }\tau'=\tau_1\times\tau_2.
\]
In particular, $\tau_1\times\tau_2$ is irreducible.

Hence, if $\pi$ is $\square$-irreducible with $\sigma$-decomposition $\pi=\soc(\pi_1\times\pi_2)$,
then $\pi_2$ is $\square$-irreducible.
\end{lemma}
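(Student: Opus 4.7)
The plan is to identify the $\sigma$-decomposition of $\pi := \pi_1 \times \pi_2$ via the characterization in Proposition \ref{prop: decomDelta}, applied directly to $J(\pi)$ through the Bernstein--Zelevinsky Hopf identity $[J(\pi)] = [J(\pi_1)] \cdot [J(\pi_2)]$. First, I would verify that $\sigma' := \sigma_1 \times \sigma_2 \in \Irrsat$ (a product of products of basic $\sigma$-saturated representations is again such a product, and since the underlying segments are pairwise unlinked, $\sigma'$ is irreducible and $\square$-irreducible), and that $\tau' := \tau_1 \times \tau_2 \in \Repsred$ by \eqref{eq: prodred}. Combining \eqref{eq: hindadd} with Proposition \ref{prop: decomDelta}\ref{part: hind2},
\[
\hind_\sigma(\pi) = \hind_\sigma(\pi_1) + \hind_\sigma(\pi_2) = \indx_\sigma(\sigma_1) + \indx_\sigma(\sigma_2) = \indx_\sigma(\sigma').
\]

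For the main analysis, I would consider irreducible subquotients $(\alpha, \beta)$ of $J(\pi)$ with $\hind_\sigma(\alpha) = \hind_\sigma(\pi)$. By the Hopf identity such a pair arises from $(a, b)$ in $J(\pi_1)$ and $(c, d)$ in $J(\pi_2)$ with $\alpha = a \times c$, $\beta = b \times d$, and additivity of $\hind_\sigma$ forces $\hind_\sigma(a) = \hind_\sigma(\pi_1)$, $\hind_\sigma(c) = \hind_\sigma(\pi_2)$. Applying Proposition \ref{prop: decomDelta}\ref{part: onlykd} individually to each $\pi_i$, the minimum-degree contribution is uniquely $(a, c) = (\sigma_1, \sigma_2)$, yielding $\alpha = \sigma'$ and corresponding $\beta$ arising from $\tau_1 \times \tau_2 = \tau'$. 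Since $(\sigma_i, \tau_i)$ appears in $J(\pi_i)$ with multiplicity one by Proposition \ref{prop: decomDelta}\ref{part: uniqueJ}, the class $[\sigma' \otimes \tau']$ appears in $[J(\pi)]$ with total multiplicity exactly one, and moreover $\pi_a = \sigma'$ for the $\sigma$-decomposition $\pi = \soc(\pi_a \times \pi_b)$.

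Suppose now that $\tau'$ were reducible, with distinct composition factors $\rho_1, \dots, \rho_k$ of multiplicities $m_i$ summing to at least two. Then $[\sigma' \otimes \tau'] = \sum m_i [\sigma' \otimes \rho_i]$ forces each $[\sigma' \otimes \rho_i]$ to occur in $[J(\pi)]$ with multiplicity at least $m_i \ge 1$. Each $(\sigma', \rho_i)$ is an irreducible subquotient of $J(\pi)$ with $\hind_\sigma(\sigma') = \hind_\sigma(\pi)$ and first-factor degree equal to $\deg \pi_a$, so Proposition \ref{prop: decomDelta}\ref{part: onlykd} forces at most one $\rho_i$ to equal $\pi_b$; any other would require $\deg \sigma' > \deg \pi_a$, contradicting $\sigma' = \pi_a$. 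Hence $\tau'$ is irreducible and equals $\pi_b$, giving $\pi = \soc(\sigma' \times \tau')$ as the $\sigma$-decomposition. For the ``hence'' part, applying the main statement with both factors equal to the $\square$-irreducible $\pi$ makes $\pi \times \pi$ irreducible with $\sigma$-decomposition $\soc((\pi_1 \times \pi_1) \times (\pi_2 \times \pi_2))$, and the just-proven irreducibility of the second factor shows that $\pi_2 \times \pi_2$ is irreducible, so $\pi_2$ is $\square$-irreducible. The principal technical subtlety is the multiplicity-one count for $[\sigma' \otimes \tau']$ in $[J(\pi)]$, which requires ruling out alternative Hopf factorizations of $\sigma'$ via cuspidal support matching against $\pi_1$ and $\pi_2$.
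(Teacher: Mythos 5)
Your proof is correct, and it works by a somewhat different route than the paper's.  The paper argues by noting that $\pi_1\times\pi_2$ embeds in $\sigma'\times\tau'$ (exactness of parabolic induction applied to the socle embeddings), so $J(\pi_1\times\pi_2)$ is a subquotient of $J(\sigma'\times\tau')$; Lemma~\ref{lem: prodSI} applied to the pair $(\sigma',\tau')$ then directly rules out any irreducible subquotient $\omega_1\otimes\omega_2$ with $\hind_\sigma(\omega_1)=\hind_\sigma(\pi_1\times\pi_2)$ and $\deg\omega_1<\deg\sigma'$, and the conclusion follows from the characterization in Proposition~\ref{prop: decomDelta}.  You instead expand $[J(\pi_1\times\pi_2)]=[J(\pi_1)]\cdot[J(\pi_2)]$ and apply Proposition~\ref{prop: decomDelta} factor by factor to $\pi_1$ and $\pi_2$, using additivity and heredity of $\hind_\sigma$ to pin down the unique minimal-degree contribution as $(\sigma_1,\tau_1)\otimes(\sigma_2,\tau_2)$.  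Both routes ultimately feed into the same Proposition~\ref{prop: decomDelta}; the paper's version isolates the extra input into Lemma~\ref{lem: prodSI} (making the final proof two lines), while yours redoes a piece of that lemma's content inline, which is slightly longer but arguably more self-contained.

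Two small points worth tightening.  First, when you say ``yielding $\alpha=\sigma'$'' you should note that $\alpha$ is a priori only an irreducible subquotient of $a\times c=\sigma_1\times\sigma_2$; the identification $\alpha=\sigma'$ needs the observation that $\sigma_1\times\sigma_2$ is itself irreducible (a product of basic $\sigma$-saturated representations, which the paper has shown is $\square$-irreducible).  Second, your multiplicity argument at the end treats the two failure modes of irreducibility ($k\ge 2$ distinct composition factors versus a single factor with multiplicity $\ge 2$) slightly asymmetrically: the $k\ge 2$ case is handled by Proposition~\ref{prop: decomDelta}\ref{part: onlykd}, while the multiplicity-$\ge 2$ case needs Proposition~\ref{prop: decomDelta}\ref{part: uniqueJ}.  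You invoke the right facts but could state more clearly which part of the proposition handles which case.  The ``hence'' part, via the $\sigma$-decomposition of $\pi\times\pi$, is exactly right.
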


\begin{proof}
Clearly, $\sigma'\in\Irrsat$, $\tau'\in\Repsred$, $\sigma'\otimes\tau'$ is a subquotient of $J(\pi_1\times\pi_2)$ and
$\indx_\sigma(\sigma')=\hind_\sigma(\pi_1\times\pi_2)$.
Moreover, by Lemma \ref{lem: prodSI}, $J(\sigma'\times\tau')$, and hence also $J(\pi_1\times\pi_2)$, does not admit an irreducible subquotient
$\omega_1\otimes\omega_2$ such that $\hind_\sigma(\omega_1)=\hind_\sigma(\pi_1\times\pi_2)$ and
$\deg\omega_1<\deg\sigma'$. The Lemma therefore follows from Proposition \ref{prop: decomDelta}.
\end{proof}

\subsection{} \label{sec: basicomp}

We turn to the analogous concepts for irreducible components.

First, we recall a general fact.
Let $R$ be any ring and $z$ a module over $R$.
We say that an $R$-module $x$ is $z$-saturated (resp., $z$-reduced) if $\Hom_R(x',z)\ne0$ for every non-trivial submodule $x'$ of $x$
(resp., $\Hom_R(x,z)=0$).
The class of $z$-saturated modules contains $z$ itself, and is closed under arbitrary direct products, passing to submodules, and taking extensions.
The class of $z$-reduced modules is closed under quotients, arbitrary direct sums, and extensions.
Note that if $x_1$ is $z$-saturated and $x_2$ is $z$-reduced, then $\Hom_R(x_2,x_1)=0$.

Any module $x$ admits a canonical short exact sequence
\begin{equation} \label{eq: sxz}
0\rightarrow x[z]\rightarrow x\rightarrow x\{z\}\rightarrow0
\end{equation}
where $x[z]$ is $z$-reduced and $x\{z\}$ is $z$-saturated.
Namely, $x[z]$ is the maximal $z$-reduced submodule of $x$.
Moreover, if $y$ is any $z$-saturated module, then any morphism $x\rightarrow y$ factors through
$x\twoheadrightarrow x\{z\}$.

In other words, the (full) subcategories of $z$-reduced and $z$-saturated objects form a torsion pair in the category of $R$-modules
in the sense of \cite[\S3.1]{MR3270589}.

We will apply this discussion to the case of (finite-dimensional) modules over the preprojective algebra $\Pi$.

Fix a basic representation $\sigma$ and let $C_\sigma$ be the corresponding irreducible component.
(We will call $C_\sigma$ a \emph{basic irreducible component}.)
Note that $C_\sigma$ is rigid. Let $x_\sigma\in C_\sigma$ be rigid.
Note that the submodules of $x_\sigma$ are rigid and correspond to the basic $\sigma$-saturated representations of $\GL$
(see \S\ref{sec: basicdef}).
Moreover, $\Ext_\Pi^1(x_1,x_2)=0$ for any submodules $x_1,x_2$ of $x_\sigma$.
An irreducible component containing a submodule of $x_\sigma$ will be called a \emph{subcomponent} of $C_\sigma$.

\begin{lemma} \label{lem: satcomp}
The $x_\sigma$-saturated modules are the direct sums of submodules of $x_\sigma$.
They are all rigid.
\end{lemma}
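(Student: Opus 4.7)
The plan is to prove the two inclusions of classes separately and then deduce rigidity as a corollary. The key preliminary observation I would record is that, by the very indexing of basic $\sigma$-saturated representations given in \S\ref{sec: defbasic}, the nonzero submodules of $x_\sigma$ are parametrized by a single integer: if $\sigma = Z(\Delta)$ with $\Delta = [a,b]$, they are the modules associated with $Z([a',b])$ for $a \le a' \le b$, and dually if $\sigma = L(\Delta)$. Hence the submodules of $x_\sigma$ form a chain $0 = y_0 \subsetneq y_1 \subsetneq \dots \subsetneq y_k = x_\sigma$ (with $k = b-a+1$), and in particular every submodule of $x_\sigma$ is some $y_j$.

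For the easy inclusion, suppose $x \cong \bigoplus_{\alpha} y_{i_\alpha}$. Embedding each summand into $x_\sigma$ via inclusion yields an embedding $\iota : x \hookrightarrow x_\sigma^N$. For any nonzero submodule $x' \subset x$, the restriction $\iota|_{x'}$ is injective, so at least one coordinate projection $x' \to x_\sigma$ is nonzero, which shows $\Hom_\Pi(x', x_\sigma) \ne 0$ and hence that $x$ is $x_\sigma$-saturated.

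For the converse, I would argue by induction on $\dim x$. The case $x = 0$ is trivial. If $x \ne 0$ is $x_\sigma$-saturated, then applying the saturation condition to the submodule $x' = x$ yields a nonzero homomorphism $\phi : x \to x_\sigma$; its image is a nonzero submodule of $x_\sigma$, so it equals $y_j$ for some $j \ge 1$. Set $K = \ker \phi$. As a submodule of the saturated module $x$, the module $K$ is itself $x_\sigma$-saturated, and $\dim K < \dim x$, so by the inductive hypothesis $K \cong \bigoplus_{\alpha} y_{i_\alpha}$. The vanishing $\Ext^1_\Pi(y_j, y_{i_\alpha}) = 0$ for all $\alpha$, recalled just before the lemma, then implies $\Ext^1_\Pi(y_j, K) = 0$, so the short exact sequence $0 \to K \to x \to y_j \to 0$ splits, yielding $x \cong y_j \oplus K$ as a direct sum of submodules of $x_\sigma$.

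Rigidity is then immediate: for any $x \cong \bigoplus_{\alpha} y_{i_\alpha}$,
\[
\Ext^1_\Pi(x, x) = \bigoplus_{\alpha, \beta} \Ext^1_\Pi(y_{i_\alpha}, y_{i_\beta}) = 0
\]
by the same vanishing. I do not expect a genuine obstacle in this proof; the only point that wants a moment of justification is the chain structure of the submodule lattice of $x_\sigma$, and that follows directly from the one-parameter indexing of basic $\sigma$-saturated representations.
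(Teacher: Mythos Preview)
Your proof is correct and takes a genuinely different route from the paper's. The paper argues explicitly with multisegments: it first shows that any $x_\sigma$-saturated $x$ must be supported on $\Delta$, writes $\pi_Q(x)=M_Q(\Delta_1+\dots+\Delta_k)$, and then proves that every $\Delta_i$ must satisfy $e(\Delta_i)=e(\Delta)$ (for $\sigma=Z(\Delta)$) by exhibiting, if not, a submodule $x_{Z(\Delta_i)}\subset x$ with $\Hom_\Pi(x_{Z(\Delta_i)},x_\sigma)=0$. Once all $\Delta_i$ have the same right endpoint, $U_\m=\emptyset$ forces $T_-=0$ and hence $x$ decomposes as the direct sum. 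Your argument instead runs an induction on $\dim x$: peel off the image of a nonzero map $x\to x_\sigma$ (which is automatically some $y_j$ since the submodule lattice of $x_\sigma$ is a chain), apply the inductive hypothesis to the kernel, and split the extension using the vanishing $\Ext^1_\Pi(y_j,y_i)=0$ already recorded before the lemma. Your approach is more categorical and would transplant verbatim to any module $z$ whose submodule lattice is a chain of pairwise $\Ext^1$-orthogonal submodules; the paper's approach, by contrast, identifies the multisegment of $x$ outright, which is slightly more informative but tied to the type-$A$ combinatorics.
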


\begin{proof}
Clearly, direct sums of submodules of $x_\sigma$ are $x_\sigma$-saturated, and by the above, they are all rigid.
Conversely, suppose that $x$ is $x_\sigma$-saturated, say of graded dimension $\bfd=(d_1,\dots,d_n)$.
Assume for concreteness that $\sigma=Z(\Delta)$. It is clear that $d_i=0$ for all $i\notin\Delta$, otherwise
we would have $\Hom_\Pi(x',x_\sigma)=0$ for the submodule $x'$ generated by a homogeneous vector of degree $i$.
Suppose that $\pi_Q(x)=M_Q(\Delta_1+\dots+\Delta_k)$. By the above, $\Delta_i\subset\Delta$ for all $i$.
We show that $e(\Delta_i)=e(\Delta)$ for all $i$. This will finish the proof because then,
$x=x_{Z(\Delta_1)}\oplus\dots\oplus x_{Z(\Delta_k)}$.

Assume on the contrary that $e(\Delta_i)<e(\Delta)$ for some $i$, and choose $i$ for which $e(\Delta_i)$ is minimal.
Then, $\Delta_j\not\prec\Delta_i$ for all $j$ and therefore, $x$ admits $x_{Z(\Delta_i)}$ as a submodule,
whereas $\Hom_\Pi(x_{Z(\Delta_i)},x_\sigma)=0$. We get a contradiction.

The argument for the case $\sigma=L(\Delta)$ is similar.
\end{proof}

We say that an irreducible component is $\sigma$-saturated if it contains an $x_\sigma$-saturated module.
Thus, $C$ is $\sigma$-saturated if and only if $C=C_1*\dots*C_k=C_1\oplus\dots\oplus C_k$
for some subcomponents $C_1,\dots,C_k$ of $C_\sigma$.
The $\sigma$-saturated irreducible components are rigid and correspond to the $\sigma$-saturated representations of $\GL$.
By Corollary \ref{cor: segood} and Lemma \ref{lem: prod},
all $\sigma$-saturated irreducible components are good.

Finally, we say that an irreducible component $C$ is $\sigma$-reduced if it contains an $x_\sigma$-reduced module, i.e., if $\hom_\Pi(C,C_\sigma)=0$.

\begin{lemma} \label{lem: sigfctr}
Every irreducible component $C$ can be written uniquely as $C_1*C_2$ where $C_1$ is $\sigma$-saturated and $C_2$ is $\sigma$-reduced.
\end{lemma}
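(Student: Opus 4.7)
The plan is to recover $C_1$ and $C_2$ from the canonical torsion short exact sequence \eqref{eq: sxz} applied to a generic $x \in C$. By upper semi-continuity of $x \mapsto \dim \Hom_\Pi(x, x_\sigma)$, the graded dimensions $\bfd_2 := \grdim x[x_\sigma]$ and $\bfd_1 := \grdim x\{x_\sigma\}$ are constant on some nonempty open subset $U \subseteq C$; let $V_1, V_2$ be graded vector spaces realizing these graded dimensions.

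For existence, I would proceed as follows. By Lemma \ref{lem: satcomp}, the saturated module $x\{x_\sigma\}$ (for $x \in U$) is a direct sum of submodules of $x_\sigma$; since these submodules have linearly independent graded dimensions (indicator functions of sub-segments sharing the endpoint distinguished by $\sigma$), the isomorphism class of $x\{x_\sigma\}$ is forced by $\bfd_1$. Call this common rigid module $y_0$ and set $C_1 = \overline{G_{V_1} \cdot y_0}$; it is $\sigma$-saturated. Next, let $C_2$ be the irreducible component of $\Lambda(V_2)$ containing the $G_{V_2}$-orbit of $x[x_\sigma]$ for generic $x \in U$: since $U$ is irreducible, the constructible map $x \mapsto G_{V_2} \cdot x[x_\sigma]$ has image contained in a single irreducible component, which is $\sigma$-reduced by construction. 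A dense open subset of $C$ then consists of modules $x$ arising as extensions $0 \to x_2 \to x \to y_0 \to 0$ with $x_2 \in C_2$, whence $C \subseteq C_1 * C_2$ by the definition of $*$; equality follows because both sides are irreducible components of $\Lambda(V)$.

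For uniqueness, suppose $C = C_1' * C_2'$ with $C_1'$ $\sigma$-saturated and $C_2'$ $\sigma$-reduced. Then a generic $x \in C$ fits into a short exact sequence $0 \to x_2' \to x \to x_1' \to 0$ with $x_1' \in C_1'$ an $x_\sigma$-saturated module and $x_2' \in C_2'$ an $x_\sigma$-reduced module. The uniqueness of the torsion pair decomposition \eqref{eq: sxz} then forces $x_2' \cong x[x_\sigma]$ and $x_1' \cong x\{x_\sigma\} \cong y_0$. Hence $C_1'$ contains the open orbit $G_{V_1} \cdot y_0$, so $C_1' = C_1$; and $C_2'$ contains the $G_{V_2}$-orbit of $x[x_\sigma]$ for generic $x \in C$, so $C_2' = C_2$ by the construction.

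The hard part will be the equality $C = C_1 * C_2$ in the existence step: one must verify both that the extensions coming from $U$ have minimal $\dim \Ext^1_\Pi(y_0, x_2)$ (so that they actually lie in $C_1 * C_2$ as defined via minimal $\Ext^1$), and that the map $U \to C_2 / G_{V_2}$ is dominant (so $C_2$ is not an over-estimation). In short, the technical heart is aligning our construction of $C_2$ from $C$ with the genericity condition built into the definition of $*$; once this is done, the proof becomes the preprojective-algebra analogue of the torsion-pair decomposition of Baumann--Kamnitzer--Tingley.
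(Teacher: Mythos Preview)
Your approach is essentially the paper's: both extract $C_1$ and $C_2$ from the torsion-pair sequence \eqref{eq: sxz} applied to a generic $x\in C$, and both use uniqueness of that sequence for the uniqueness of the decomposition. Two points are worth noting.

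First, the ``hard part'' you correctly flag---that the extensions $0\to x[x_\sigma]\to x\to y_0\to 0$ for $x\in U$ really do exhibit $C$ as $C_1*C_2$ in the sense of the minimal-$\Ext^1$ definition---is precisely what the paper dispatches by invoking \cite[Proposition 5.1]{2103.12027}. The key observation enabling that citation is that \emph{every} homomorphism $x\to y_0$ factors through the canonical surjection $x\twoheadrightarrow x\{x_\sigma\}$ (this is the universal property of the torsion quotient), so the generic homomorphism $x\to y_0$ is surjective with kernel $x[x_\sigma]$. Proposition 5.1 then gives $C=C_1*C_2$ directly, with $C_2$ the closure of the kernels. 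This bypasses both subtleties you mention (minimality of $\Ext^1$ and dominance onto $C_2$): you should invoke it rather than attempt a bare-hands verification.

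Second, a minor gap: upper semi-continuity of the single function $x\mapsto\dim\Hom_\Pi(x,x_\sigma)$ does not by itself pin down $\grdim x\{x_\sigma\}$. You need either the paper's route---observing that $\{x\mid x\{x_\sigma\}\simeq x_1\}$ is constructible for each of the finitely many candidate saturated modules $x_1$, so one such set is open dense---or else semi-continuity of $\dim\Hom_\Pi(x,y)$ for \emph{every} submodule $y$ of $x_\sigma$, since these dimensions jointly determine the multiplicities in the direct-sum decomposition of $x\{x_\sigma\}$ (the matrix $(\dim\Hom_\Pi(y_i,y_j))_{i,j}$ is unitriangular for the chain of submodules of $x_\sigma$).
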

We call this the $\sigma$-decomposition of $C$.

\begin{proof}
Suppose that $C\subset\Lambda_{\bfd}$ and let $d$ be the total dimension of $\bfd$.
For any module $y$, the set
\[
\Lambda_{\bfd}(y):=\{x\in\Lambda_{\bfd}\mid\exists\text{ an epimorphism }x\twoheadrightarrow y\}
\]
is constructible.
Note that for any $\sigma$-saturated module $x_1$ of dimension $d_1$ say, we have
\[
\{x\in\Lambda_{\bfd}\mid x\{x_\sigma\}=x_1\}=\Lambda_{\bfd}(x_1)\setminus\cup_y\Lambda_{\bfd}(y)
\]
where $y$ ranges over all $x_\sigma$-saturated modules with $d_1<\dim y\le d$
for which there exists an epimorphism $y\twoheadrightarrow x_1$.
Since up to isomorphism, there are only finitely many $x_\sigma$-saturated modules of a given dimension,
it follows that there exists an $x_\sigma$-saturated module $x_1$ such that $x\{x_\sigma\}=x_1$
for all $x$ in an open, nonempty subset $C'$ of $C$.

Let $C_1$ be the irreducible component containing $x_1$.
Since every morphism $x\rightarrow x\{x_\sigma\}$ factors through the canonical projection
$x\twoheadrightarrow x\{x_\sigma\}$,
we infer from \cite[Proposition 5.1]{2103.12027} that $C=C_1*C_2$ where $C_2\in\Irrcomp$ is the closure of $\{x[x_\sigma]\mid x\in S\}$
for a suitable open subset $S\ne\emptyset$ of $C'$.
It is clear that $C_2$ is $\sigma$-reduced.

Conversely, suppose that $C=C_1'*C'_2$ where $C_1'$ is $\sigma$-saturated and $C_2'$ is $\sigma$-reduced.
Let $x_1'$ be a rigid element of $C_1'$ and
\[
C_2''=\{x\in C_2'\mid\Hom_\Pi(x,x_\sigma)=0\},
\]
an open nonempty subset of $C_2'$. Then, any short exact sequence
\[
0\rightarrow x_2\rightarrow x\rightarrow x_1'\rightarrow 0
\]
with $x_2\in C_2''$ is isomorphic to \eqref{eq: sxz} (with $z=x_\sigma$). In particular, if also $x\in C'$, then
necessarily $x_1'=x_1$ and $x_2\in C_2$. The uniqueness follows.
\end{proof}

\begin{remark}
In fact, Lemma \ref{lem: sigfctr} is a very special case of the results of \cite{MR3270589}.
(Cf.\ [ibid., \S5.5].)
More precisely, in the notation of [ibid.] the set of $\sigma$-saturated (resp., $\sigma$-reduced) irreducible components is
$\Torf^w$ (resp., $\Tors^w$) where $w$ is the product of the simple reflections pertaining to the elements of $\Delta$, ordered by $Q$ or $Q^{\op}$,
depending on whether $\sigma=Z(\Delta)$ or $L(\Delta)$.
\end{remark}

The $\sigma$-decomposition is compatible with that of Proposition \ref{prop: decomDelta}. More precisely,
\begin{corollary}
Suppose that $C=C_1*C_2$ is the $\sigma$-decomposition of $C$. Then,
\begin{equation} \label{eq: pi12}
\pi(C)=\soc(\pi(C_1)\times\pi(C_2))
\end{equation}
is the $\sigma$-decomposition of $\pi(C)$.
In particular, $C$ is $\sigma$-reduced if and only if $\pi(C)$ is $\sigma$-reduced.
\end{corollary}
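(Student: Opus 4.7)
The plan is to establish the identity \eqref{eq: pi12} first, then recognize it as the $\sigma$-decomposition of $\pi(C)$ via the uniqueness in Proposition \ref{prop: decomDelta}, and finally deduce the ``in particular'' statement by juxtaposing the two uniqueness results (Proposition \ref{prop: decomDelta} and Lemma \ref{lem: sigfctr}). Since $C_1$ is $\sigma$-saturated, it is good by Corollary \ref{cor: segood} and Lemma \ref{lem: prod} (as already noted in \S\ref{sec: basicomp}), and $\pi(C_1)\in\Irrsat$ is $\square$-irreducible. Therefore $\pi(C)=\pi(C_1*C_2)\hookrightarrow\pi(C_1)\times\pi(C_2)$, and Theorem \ref{thm: kkko2} then gives that the ambient product is \SI{} with socle $\pi(C)$, establishing \eqref{eq: pi12}.

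The main content is to show that $\pi(C_2)\in\Irred$. I would assume for contradiction that $\pi(C_2)$ is not $\sigma$-reduced. By Lemma \ref{lem: red} there exist $\sigma'\in\Irrsatb$ and $\pi'\in\Irr$ with $\pi(C_2)\hookrightarrow\sigma'\times\pi'$. Let $C_{\sigma'}$ and $C'$ be the irreducible components corresponding to $\sigma'$ and $\pi'$. Since $C_{\sigma'}$ is a basic subcomponent of $C_\sigma$ it is rigid and good (by Corollary \ref{cor: segood} applied to the appropriate single segment), and $\sigma'=\pi(C_{\sigma'})$ is $\square$-irreducible, so the socle of $\sigma'\times\pi'$ coincides with $\pi(C_{\sigma'}*C')$. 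As $\pi(C_2)$ is irreducible and embeds in this \SI{} product, it must equal the socle, forcing $C_2=C_{\sigma'}*C'$ with $C_{\sigma'}$ a nonzero $\sigma$-saturated component.

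The crux, and what I expect to be the main technical step, is to derive a contradiction with the $\sigma$-reducedness of $C_2$. By the definition of the operation $*$, for $x$ in an open dense subset of $C_{\sigma'}*C'$ there is a short exact sequence $0\to x'\to x\to z\to 0$ with $z$ a rigid (hence generic) element of $C_{\sigma'}$. By Lemma \ref{lem: satcomp} and the fact that $C_{\sigma'}$ is a basic subcomponent of $C_\sigma$, such a $z$ is isomorphic to a fixed nonzero submodule of $x_\sigma$, and the composition $x\twoheadrightarrow z\hookrightarrow x_\sigma$ is then a nonzero element of $\Hom_\Pi(x,x_\sigma)$. Since this holds on an open dense subset of $C_2$, we conclude $\hom_\Pi(C_2,C_\sigma)\ge 1$, contradicting the $\sigma$-reducedness of $C_2$. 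Therefore $\pi(C_2)\in\Irred$, and by the uniqueness clause of Proposition \ref{prop: decomDelta} the identity \eqref{eq: pi12} is indeed the $\sigma$-decomposition of $\pi(C)$.

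The ``in particular'' assertion then follows by comparing the two uniqueness statements. On the geometric side, Lemma \ref{lem: sigfctr} gives that $C$ is $\sigma$-reduced if and only if the $\sigma$-saturated factor $C_1$ in its $\sigma$-decomposition is zero. On the representation-theoretic side, Proposition \ref{prop: decomDelta} gives that $\pi(C)$ is $\sigma$-reduced if and only if the $\sigma$-saturated factor $\pi(C_1)$ in its $\sigma$-decomposition equals $\one$. Since $\pi(C_1)=\one$ if and only if $C_1=0$, the two conditions are equivalent.
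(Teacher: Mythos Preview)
Your proof is correct and follows essentially the same route as the paper's: establish \eqref{eq: pi12} using that $C_1$ is good, note $\pi(C_1)\in\Irrsat$, and then reduce to showing that a $\sigma$-reduced component $C_2$ yields a $\sigma$-reduced representation $\pi(C_2)$ via Lemma \ref{lem: red} and the goodness of basic subcomponents. Your detailed justification of the contradiction (producing a nonzero element of $\Hom_\Pi(x,x_\sigma)$ from the surjection $x\twoheadrightarrow z\hookrightarrow x_\sigma$ for generic $x\in C_{\sigma'}*C'$) simply unpacks what the paper's terse phrase ``in contradiction to the assumption that $C$ is $\sigma$-reduced'' leaves implicit.
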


\begin{proof}
Indeed, since $C_1$ is good, \eqref{eq: pi12} holds.
Clearly, $\pi(C_1)$ is $\sigma$-saturated.
It remains to show that if $C$ is $\sigma$-reduced, then $\pi(C)$ is $\sigma$-reduced.
Suppose on the contrary that $\pi(C)$ is not $\sigma$-reduced.
Then, by Lemma \ref{lem: red}, $\pi(C)=\soc(\pi_1\times\pi_2)$ where $\pi_1$ is a basic $\sigma$-saturated representation.
Correspondingly, $C=C_1*C_2$ with $C_1$ a subcomponent of $C_\sigma$, in contradiction to the assumption that $C$
is $\sigma$-reduced.
\end{proof}

\section{Proof of Theorem \ref{thm: cor}} \label{sec: endofproof}

Let $\sigma$ be a basic representation and $C$ an irreducible component with $\sigma$-decomposition $C=C_1*C_2$.
It follows from \cite[Corollary 8.7]{2103.12027} that if $C$ is rigid, then $C_2$ is rigid.
The converse is not true in general. However, we have the following. (As usual, we denote by $C_\sigma$ the irreducible component
corresponding to $\sigma$.)
\begin{proposition} \label{prop: indelta}
Let $\sigma$ be a basic representation and $C$ an irreducible component with $\sigma$-decomposition $C=C_1*C_2$.
Assume that
\begin{enumerate}
\item $C$ commutes with every subcomponent of $C_\sigma$.
\item $C_2$ is rigid.
\item $\pi(C_2)$ is $\square$-irreducible.
\item $\pi(C_2*D)=\soc(\pi(C_2)\times\pi(D))$ for every $D\in\Irrcomp$.
\end{enumerate}
Then,
\begin{enumerate}
\item $C$ is rigid.
\item $\pi(C)$ is $\square$-irreducible.
\item $\pi(C*D)=\soc(\pi(C)\times\pi(D))$ for every $D\in\Irrcomp$.
\end{enumerate}
\end{proposition}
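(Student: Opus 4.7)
Begin with standing observations. Since $C_1$ is $\sigma$-saturated, it decomposes as a $*$-product of subcomponents of $C_\sigma$, each of the form $\prm_Q(\Delta')$ or $\prm_{Q^{\op}}(\Delta')$ for a suitable sub-segment $\Delta'$. These factors pairwise strongly commute and their parabolic inductions are $\square$-irreducible, so by Corollary \ref{cor: segood} together with Lemma \ref{lem: prod}, $C_1$ is good and $\pi(C_1)$ is $\square$-irreducible. Combining goodness of $C_1$ with the $\sigma$-decomposition of $C$ gives $\pi(C) = \soc(\pi(C_1) \times \pi(C_2))$. Moreover, applying assumption (1) iteratively to the factors of $C_1$ shows that $C$ strongly commutes with $C_1$. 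I would establish the socle identification (conclusion (3)) first. For any $D \in \Irrcomp$, associativity of $*$, goodness of $C_1$ and assumption (4) give
\[
\pi(C * D) = \pi(C_1 * (C_2 * D)) = \soc\bigl(\pi(C_1) \times \soc(\pi(C_2) \times \pi(D))\bigr).
\]
Since $\pi(C_1)$ and $\pi(C_2)$ are $\square$-irreducible, standard extensions of Theorem \ref{thm: kkko2}(1) (together with Lemma \ref{lem: prodSI}) ensure that $\pi(C_1) \times \pi(C_2) \times \pi(D)$ is SI with simple socle equal to the nested socle above, namely $\pi(C * D)$. Since $\pi(C) \hookrightarrow \pi(C_1) \times \pi(C_2)$ gives $\pi(C) \times \pi(D) \hookrightarrow \pi(C_1) \times \pi(C_2) \times \pi(D)$, monotonicity of socle under inclusion yields
\[
\soc(\pi(C) \times \pi(D)) = (\pi(C) \times \pi(D)) \cap \pi(C * D),
\]
which, being a nonzero subrepresentation of the simple representation $\pi(C * D)$, must equal $\pi(C * D)$. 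The identification $\soc(\pi(D) \times \pi(C)) = \pi(D * C)$ is established symmetrically, using goodness of $C_1$ on the right.

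For the $\square$-irreducibility of $\pi(C)$ (conclusion (2)) I would apply the socle identification with $D = C_1$. Using strong commutativity of $C$ and $C_1$, we have $C * C_1 = C_1 * C$; combined with goodness of $C_1$ this gives
\[
\soc(\pi(C) \times \pi(C_1)) = \pi(C * C_1) = \pi(C_1 * C) = \soc(\pi(C_1) \times \pi(C)).
\]
Since $\pi(C_1)$ is $\square$-irreducible, Theorem \ref{thm: kkko2}(3) then forces $\pi(C) \times \pi(C_1)$ to be irreducible. Lemma \ref{lem: sqr123}, applied with $\pi_1 = \pi(C_1)$, $\pi_2 = \pi(C_2)$ and $\pi = \pi(C) = \soc(\pi(C_1) \times \pi(C_2))$, yields the $\square$-irreducibility of $\pi(C)$.

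The remaining rigidity claim (conclusion (1)) is the main obstacle. My plan is to analyze $\Ext^1_\Pi(x, x)$ for a generic $x \in C$ via the long exact sequences attached to the defining short exact sequence $0 \to x_2 \to x \to x_1 \to 0$. The vanishings $\Ext^1_\Pi(x_i, x_i) = 0$ (rigidity of $C_1$ and $C_2$), $\Ext^1_\Pi(x, x_1) = \Ext^1_\Pi(x_1, x) = 0$ (strong commutativity of $C$ with $C_1$, together with the $\Ext^1$-symmetry on $\Pi$-modules), and $\Hom_\Pi(x_2, x_1) = 0$ (from the $\sigma$-reduced/saturated dichotomy) reduce matters only up to a potentially nonzero contribution from $\Ext^1_\Pi(x_2, x_1)$, which is the real source of difficulty. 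My intended route is to invoke the explicit rigidity criterion of \cite[Lemma 8.1]{2103.12027}, or equivalently to use \cite[Corollary 9.3]{2103.12027} expressing $\Ext^1_\Pi(x, x)$ as the cokernel of $\linT_{x;x}$, and to deduce surjectivity of the $\linT$-maps attached to the decomposition $C = C_1 * C_2$ from the explicit formulae \eqref{eq: disclinT}. Carrying out this reduction — making precise use of the commutativity hypothesis together with the combinatorics of the $\sigma$-saturated building blocks of $C_1$ — is the technical heart of the proof.
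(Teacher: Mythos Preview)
Your argument for conclusion (3) has a genuine gap: the operation $*$ is \emph{not} associative in general, so the identity $(C_1*C_2)*D=C_1*(C_2*D)$ cannot simply be quoted. The paper obtains this via \cite[Corollary 7.4]{2103.12027}, which requires an auxiliary $\hom$-vanishing hypothesis; this is why the paper first treats the case where $D$ is $\sigma$-reduced (so that $\hom_\Pi(D,C_1)=0$, since $C_1$ is $\sigma$-saturated), and only then reduces the general case to it via the $\sigma$-decomposition $D=D_1*D_2$. The same issue affects your claim that $\pi(C_1)\times\pi(C_2)\times\pi(D)$ is SI: Lemma \ref{lem: prodSI} needs the second factor $\pi(C_2)\times\pi(D)$ to be $\sigma$-reduced, which again forces $D$ to be $\sigma$-reduced. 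There is no ``standard extension'' of Theorem \ref{thm: kkko2}(1) that gives SI of a triple product from $\square$-irreducibility of two of the three factors alone. In the paper's treatment of the general case, the SI property of $\pi(C)\times\pi(D_1)\times\pi(D_2)$ is obtained only \emph{after} establishing that $\pi(C)$ is $\square$-irreducible (conclusion (2)), so that $\pi(C)\times\pi(D_1)$ is irreducible and $\square$-irreducible; thus your ordering (3) $\Rightarrow$ (2) cannot be carried through as stated.

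For conclusion (1) you are making life much harder than necessary. The paper simply invokes \cite[Corollary 8.2]{2103.12027}: since $C_1$ and $C_2$ are rigid and $C$ commutes with $C_1$, $C$ is rigid. Your proposed long-exact-sequence analysis would have to contend precisely with the term $\Ext^1_\Pi(x_2,x_1)$ you flag, and there is no combinatorial shortcut here; the cited corollary packages exactly the argument you need. Similarly, for conclusion (2) the paper argues directly that $\pi(C)\times\pi(C_1)$ is irreducible (using that $C$ commutes with each subcomponent of $C_\sigma$, each of which is good with $\square$-irreducible image, together with Theorem \ref{thm: kkko2}\ref{part: irrcit} and Corollary \ref{cor: allprod}), and then applies Lemma \ref{lem: sqr123}; this avoids the dependence on conclusion (3) that your route introduces.
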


\begin{proof}
The first part follows from \cite[Corollary 8.2]{2103.12027} since
$C_1$ and $C_2$ are rigid and $C$ commutes with $C_1$.
The second part follows from Lemma \ref{lem: sqr123} since $\pi(C)\times\pi(C_1)$ is irreducible.

It remains to show that
\[
\pi(C*D)=\soc(\pi(C)\times\pi(D))
\]
for any irreducible component $D$.

Assume first that $D$ is $\sigma$-reduced.
Then, since $C_1$ is $\sigma$-saturated, we have $\hom_\Pi(D,C_1)=0$ and hence by \cite[Corollary 7.4]{2103.12027}
\[
C*D=(C_1*C_2)*D=C_1*(C_2*D).
\]
On the other hand, $\pi(C_2)\times\pi(D)$ is $\sigma$-reduced by \eqref{eq: prodred} and SI since $\pi(C_2)$ is $\square$-irreducible.
Hence, $\pi(C_1)\times\pi(C_2)\times\pi(D)$ is SI by Lemma \ref{lem: prodSI}. Therefore,
\begin{multline*}
\soc(\pi(C)\times\pi(D))=\soc(\pi(C_1)\times\pi(C_2)\times\pi(D))\\
=\soc(\pi(C_1)\times\soc(\pi(C_2)\times\pi(D))).
\end{multline*}
Since $C_1$ is good, by the assumption on $C_2$ we get
\[
\soc(\pi(C)\times\pi(D))=\pi(C_1*(C_2*D))=\pi(C*D).
\]

In the general case, let $D=D_1*D_2$ be the $\sigma$-decomposition of $D$.
By assumption on $C$ and Lemma \ref{lem: satcomp}, $C$ and $D_1$ commute.
Hence, by \cite[Corollary 7.4]{2103.12027}
\[
C*D=C*(D_1*D_2)=(C*D_1)*D_2=(D_1*C)*D_2=D_1*(C*D_2).
\]
Moreover, $\pi(C)\times\pi(D_1)$ is irreducible, hence $\square$-irreducible
since $\pi(C)$ and $\pi(D_1)$ are both $\square$-irreducible. Hence
$\pi(C)\times\pi(D_1)\times\pi(D_2)$ is SI, and therefore
\begin{multline*}
\soc(\pi(C)\times\pi(D))=\soc(\pi(C)\times\pi(D_1)\times\pi(D_2))\\=\soc(\pi(D_1)\times\soc(\pi(C)\times\pi(D_2))).
\end{multline*}
We already proved that $\soc(\pi(C)\times\pi(D_2))=\pi(C*D_2)$. Therefore,
\[
\soc(\pi(C)\times\pi(D))=\pi(D_1*(C*D_2))=\pi(C*D).
\]
The proposition follows.
\end{proof}

Recall that by Remark \ref{rem: symm}, in order to show Theorem \ref{thm: cor},
it is enough to show that if $C=\prm_Q(\m)$ where $\m$ is (regular) balanced, then
\[
\pi(C*D)=\soc(\pi(C)\times\pi(D))
\]
for every $D\in\Irrcomp$. We prove this, together with the rigidity of $C$ and the $\square$-irreducibility of $Z(\m)$
by induction on the size of $\m$.
The induction step follows from Proposition \ref{prop: indelta} using the following combinatorial lemma
which is similar to \cite[Lemma 7.4]{MR3866895}, and proved along the same lines.

\begin{lemma} \label{lem: combreg}
Suppose $C=\prm_Q(\m)$ where $\m\ne0$ is balanced.
Then, there exists a basic irreducible component $C_\sigma$ and a $\sigma$-reduced irreducible
component $C'=\prm_Q(\m')$ such that
\begin{enumerate}
\item $C=C_\sigma*C'$.
\item $\m'$ is balanced.
\item $C$ commutes with every subcomponent of $C_\sigma$.
\end{enumerate}
\end{lemma}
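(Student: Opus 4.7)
The plan is to adapt the strategy of \cite[Lemma 7.4]{MR3866895}. I would construct a distinguished segment $\Delta$ of $\m$ by an extremality argument and set $\sigma=Z(\Delta)$ (in some cases possibly $L(\Delta)$ instead); a natural candidate is a segment of $\m$ minimizing $e(\Delta)$ subject to not being preceded by any other segment of $\m$, or a dual choice maximizing $b(\Delta)$ subject to not preceding any other segment. Set $C_\sigma=\prm_Q(\Delta)$, $\m'=\m-\Delta$, and $C'=\prm_Q(\m')$.

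Property (1), $C=C_\sigma*C'$, will follow from the regularity of $\m$: since $\m$ is regular, no other segment $\Gamma$ of $\m$ has $e(\Gamma)=e(\Delta)$, hence no other subsegment is $\sigma$-saturated, so the $\sigma$-saturated part of $C$ in its $\sigma$-decomposition (Lemma \ref{lem: sigfctr}) is exactly $C_\sigma$. The extension vanishing $\Ext_Q^1(M_Q(\Delta),M_Q(\m'))=0$ (or its $Q^{\op}$-variant), forced by the extremality of $\Delta$ together with \eqref{eq: notprec}, combined with Lemma \ref{lem: notprec1}, yields the decomposition and simultaneously shows that $C'$ is $\sigma$-reduced. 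Property (2), that $\m'$ is balanced, is immediate from the definition: absence of $4231$ and $3412$ sub-patterns is inherited by any submultisegment, so a forbidden pattern in $\m'$ would already be one in $\m$, contradicting the hypothesis.

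Property (3) is the substantive point. Each subcomponent of $C_\sigma$ is of the form $\prm_Q(\Delta')$ for a single segment $\Delta'\subset\Delta$ sharing the right endpoint of $\Delta$ (when $\sigma=Z(\Delta)$). Since $\prm_Q(\Delta')$ is trivially a quasi-lamina, Remark \ref{rem: matching} reduces commutativity of $C$ with $\prm_Q(\Delta')$ to verifying the matching conditions $\mtch(\m,\Delta')$ and $\mtch(\Delta',\m)$ for the bipartite graphs $\grph_{\m;\Delta'}$ and $\grph_{\Delta';\m}$. These matchings are to be constructed explicitly, exploiting the extremality of $\Delta$ to control which pairs $(i,j)$ lie in the sets $U_{\m;\Delta'}$ and $V_{\m;\Delta'}$ of \eqref{eq: UandV}, and invoking the balanced hypothesis on $\m$ to rule out the ``crossings'' that would otherwise obstruct a matching.

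The main obstacle will be property (3), mirroring the role balancedness plays in \cite[Lemma 7.4]{MR3866895} and in the proof of Theorem \ref{thm: mainLM}. One needs to verify by a careful case analysis that any failure of the matching condition forces a $4231$ or $3412$ sub-pattern in $\m$. A secondary subtlety is that the natural choice of $\Delta$ may depend on the shape of $\m$, possibly forcing a case split between $\sigma=Z(\Delta)$ and $\sigma=L(\Delta)$ (equivalently, between extremal choices on the $Q$- and $Q^{\op}$-sides). Once (3) is established, the other three properties are comparatively straightforward.
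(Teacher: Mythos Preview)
Your overall architecture matches the paper's: a case split, the matching criterion of Remark \ref{rem: matching} for property (3), and the balanced hypothesis to exclude obstructions. But there is a genuine structural gap in the $L$-branch.

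You assume throughout that $\sigma$ is $Z(\Delta)$ or $L(\Delta)$ for a segment $\Delta$ \emph{of} $\m$, and that $\m'=\m-\Delta$. The paper's Case~1 (when no ``interleaving'' index pair exists) does exactly this with $\sigma=Z(\Delta_l)$. But in Case~2 the paper takes $\sigma=L([b(\Delta_m),b(\Delta_{l'})])$, a segment built from the \emph{initial points} of several consecutive $\Delta_i$, which is typically \emph{not} a segment of $\m$; correspondingly $\m'$ is obtained by replacing each $\Delta_i$ for $l'\le i\le m$ by $^-\Delta_i$, not by deleting a segment. Two of your claimed ``easy'' steps then fail as stated: (i) $\m'$ is no longer a submultisegment of $\m$, so property (2) is not immediate --- one must observe that trimming left endpoints preserves $4231$/$3412$ pattern types; (ii) the identity $C=C_\sigma*C'$ is no longer a one-line application of Lemma \ref{lem: notprec1}, but requires a chain of associativity manipulations (splitting $\m$ into three pieces, using commutation of $C_\sigma$ with one piece and $\hom_\Pi$-vanishing with another, and the ladder identity $C_\sigma*C'_2=C_2$).

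Also note that in this $L$-case the subcomponents of $C_\sigma$ are, in the $\prm_Q$ parametrization, sums of singleton segments $[b(\Delta_m),b(\Delta_m)]+\dots+[s,s]$ rather than a single segment $\Delta'$, so the matching verification for property (3) has a different shape than you sketch (one direction uses that $b(\Delta_{i+1})=b(\Delta_i)-1$ on the relevant range; the other reduces to showing no $e(\Delta_i)=b(\Delta_m)-1$, which is where the balanced hypothesis enters). Your plan is salvageable, but you should not expect $\m'=\m-\Delta$ to suffice in all cases.
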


\begin{proof}
Write $\m=\Delta_1+\dots+\Delta_k$ where $b(\Delta_1)>\dots>b(\Delta_k)$.
Let $1\le m\le k$ be the largest index such that $e(\Delta_1)>\dots>e(\Delta_m)$.
Let $1\le l\le m$ be the smallest index such that $\Delta_{i+1}\prec\Delta_i$ for all $l\le i<m$.
(In other words, $l$ is the largest index between $2$ and $m$ such that $b(\Delta_{l-1})>e(\Delta_l)+1$
if such an index exists; otherwise $l=1$.)

The proof splits into two cases according to whether there exist indices $i$ and $j$ such that $l\le i<m<j\le k$ and $e(\Delta_{i+1})<e(\Delta_j)<e(\Delta_i)$.
Assume first that such indices do not exist.
We will show that the required conditions are satisfied for $\sigma=Z(\Delta)$ and $\m'=\m-\Delta$ where we set $\Delta=\Delta_l$.
Clearly, $\m'$ is balanced, since it is a submultisegment of $\m$.
It is also clear from the definition of $l$ that $\Delta\not\prec\Delta_i$ for all $i$. Hence, $C=C_\sigma*C'$.
Moreover, $C'$ is $\sigma$-reduced since $e(\Delta_i)\ne e(\Delta)$ for all $i\ne l$.
Suppose that $\sigma'=Z(\Delta')$ where $\Delta'\subset\Delta$ and $e(\Delta')=e(\Delta)$.
In order to show that $C$ commutes with $C_{\sigma'}$
we use Remark \ref{rem: matching} and its notation. The condition $\mtch(\m,\Delta')$ is trivial since $U_{\m;\Delta'}=\emptyset$.
In order to show $\mtch(\Delta',\m)$ it suffices to show that
if $X=\{j\mid\Delta_j\prec\Delta'\}$ and $Y=\{i\mid\Delta_i\prec\rshft{\Delta'}\}$, then the function
\[
f:X\rightarrow Y,\ \ f(j)=\max\{i\in Y\mid \Delta_j\prec\Delta_i\}
\]
is injective. Note that $f$ is well-defined and $l\le f(j)<j$ for all $j$.

Assume on the contrary that $f(j)=f(j')$ for some $j,j'\in X$ with $j'<j$.
Clearly, $\Delta_j\not\prec\Delta_{j'}$, otherwise $f(j)\ge j'>f(j')$.
On the other hand $b(\Delta_{j'})<b(\Delta')\le e(\Delta_j)+1$.
Since also $b(\Delta_j)<b(\Delta_{j'})$, we necessarily have $e(\Delta_j)>e(\Delta_{j'})$. In particular, $j>m$.
Let $i$ be the largest index between $l$ and $m$ such that $\Delta_{j'}\prec\Delta_i$ and $\Delta_j\prec\Delta_i$.
If $i=m$, then $j'>m+1$ (since $e(\Delta_{m+1})>e(\Delta_m)$) and $\Delta_m+\Delta_{m+1}+\Delta_{j'}+\Delta_j$ is a submultisegment of type $3412$,
which is a contradiction to the assumption on $\m$. Thus, $i<m$ and therefore $e(\Delta_{i+1})<e(\Delta_i)$.
By the maximality of $i$, we necessarily have $e(\Delta_{i+1})<e(\Delta_j)$,
in contradiction to our assumption on $\m$. 

It remains to consider the case where there exist indices $r$ and $s$ such that $l\le r<m<s\le k$ and $e(\Delta_{r+1})<e(\Delta_s)<e(\Delta_r)$.
Let $1\le l'\le m$ be the smallest index such that $b(\Delta_i)=b(\Delta_{i+1})+1$ for all $l'\le i<m$.
(Clearly, $l'\ge l$.)
Let $\sigma=L([b(\Delta_m),b(\Delta_{l'})])$ and $\m'=\Delta'_1+\dots+\Delta'_k$ where
\[
\Delta'_i=\begin{cases} ^-\Delta_i&i=l',\dots,m\\\Delta_i&\text{otherwise.}\end{cases}
\]
(We omit indices for which $\Delta'_i$ is empty.)
Since $b(\Delta'_i)\ne b(\Delta_m)$ for all $i$, $C'$ is $\sigma$-reduced.
Also, by the choice of $l'$, $\m'$ is regular and it is balanced since any submultisegment of $\m'$ of type $4231$ or of type $3412$ would give
(upon replacing $\Delta'_i$ by $\Delta_i$) a submultisegment of $\m$ of the same type.

Next, we show that $C=C_\sigma*C'$.
Let
\begin{gather*}
\m_1=\m_1'=\Delta_1+\dots+\Delta_{l'-1},\\
\m_2=\Delta_{l'}+\dots+\Delta_m,\ \ \m'_2=\Delta'_{l'}+\dots+\Delta'_m, \\
\m_3=\m_3'=\Delta_{m+1}+\dots+\Delta_k,\\
\m_4=\m_2+\m_3,\ \m'_4=\m'_2+\m'_3,\\
C_i=\prm_Q(\m_i), C'_i=\prm_Q(\m'_i),\ i=1,2,3,4.
\end{gather*}
By Lemma \ref{lem: notprec1} we have
\[
C_2*C_3=C_4,\ \ C'_2*C'_3=C'_4,\ \ C_1*C_4=C,\ \ C'_1*C'_4=C'.
\]
Moreover, by the choice of $l'$, $C_\sigma$ and $C'_1$ commute. Therefore,
\[
C_\sigma*C'=C_\sigma*(C'_1*C'_4)=(C_\sigma*C'_1)*C'_4=(C'_1*C_\sigma)*C'_4=
C'_1*(C_\sigma*C'_4).
\]
Also, $\hom_\Pi(C'_3,C_\sigma)=0$ and therefore,
\[
C_\sigma*C'_4=C_\sigma*(C'_2*C'_3)=(C_\sigma*C'_2)*C'_3.
\]
Note that $C_\sigma*C'_2=C_2$ (see Example \ref{exam: twolads}). We therefore get
\[
C_\sigma*C'_4=C_2*C'_3=C_2*C_3=C_4.
\]
Hence,
\[
C_\sigma*C'=C'_1*C_4=C_1*C_4=C.
\]

It remains to show that $C$ commutes with every subcomponent $\prm_Q(\n')$ of $C_\sigma$.
Note that $\n'=[b(\Delta_m),b(\Delta_m)]+\dots+[s,s]$ for some $s\in [b(\Delta_m),b(\Delta_{l'})]$.
We use Remark \ref{rem: matching}. The condition $\mtch(\m,\n')$ follows from the fact that
$\Delta_{i+1}\prec\Delta_i$ and $b(\Delta_{i+1})=b(\Delta_i)-1$ whenever
$b(\Delta_i)\in [b(\Delta_m)+1,b(\Delta_{l'})+1]$.
In order to show $\mtch(\n',\m)$, it suffices to check that $e(\Delta_i)\ne b(\Delta_m)-1$ for all $i$.

Assume on the contrary that there exists $i$ such that $e(\Delta_i)=b(\Delta_m)-1$.
Clearly, $i\ne s$ since $e(\Delta_s)>e(\Delta_m)>e(\Delta_i)$.
Necessarily $i<s$, otherwise $\Delta_r+\dots+\Delta_m+\Delta_s+\Delta_i$ is a submultisegment of type $4231$.
By definition of $m$, $e(\Delta_{m+1})>e(\Delta_m)$. Suppose that $e(\Delta_{m+1})<e(\Delta_l)$.
Let $l\le t<m$ be the index such that $e(\Delta_{t+1})<e(\Delta_{m+1})<e(\Delta_t)$.
Then, $\Delta_t+\dots+\Delta_{m+1}+\Delta_i$ is a submultisegment of type $4231$, in contradiction to
the assumption on $\m$. Otherwise, $e(\Delta_{m+1})>e(\Delta_l)$ and hence, $\Delta_r+\dots+\Delta_{m+1}+\Delta_i+\Delta_s$
is a submultisegment of type $3412$. Once again we get a contradiction.

The proof of the lemma is complete.
\end{proof}

As indicated before, Theorem \ref{thm: cor} now follows from Remark \ref{rem: symm}, Proposition \ref{prop: indelta}
and Lemma \ref{lem: combreg} by induction on the size of $\m$.

\begin{remark} \label{rem: recipebalanced}
The proofs of Proposition \ref{prop: indelta} and Lemma \ref{lem: combreg}, together with Remark \ref{rem: recipe1},
give a combinatorial recipe for the computation of $\m*\n$ in the case where $\m$ or $\n$ is balanced.
\end{remark}

\begin{remark}
Proposition \ref{prop: indelta} can be used to provide additional examples of rigid, good irreducible components.
It would be interesting to see what are the limits of this method.
At any rate, there are examples of rigid irreducible components $C\ne0$ that do not commute with
any basic irreducible component $C_\sigma$ for which $C=C_\sigma*C'$ for some $C'$.
\end{remark}

\begin{remark}
It would be interesting to extend the concepts and results of \S\ref{sec: defbasic}--\S\ref{sec: basicomp}
to the more general framework of \cite{MR3270589}.
\end{remark}

\section{Odds and ends} \label{sec: odds}

We end the paper with several questions and speculations.

\begin{question}
Given $C_1,C_2\in\Irrcomp$, what is the representation-theoretic counterpart of the condition $\hom_\Pi(C_1,C_2)=0$ ?
\end{question}

Recall the condition $\sprt$ introduced in \S\ref{sec: fork}. One may wonder whether the conditions
$\hom_\Pi(C_2,C_1)=0$ and $\pi(C_1)\sprt\pi(C_2)$ are equivalent.
At this stage we are unable to prove or disprove either direction.

Let $C_i\in\Irrcomp(\bfd_i)$, $i=1,\dots,k$ and let $\bfd=\bfd_1+\dots+\bfd_k$.
For any $C\in\Irrcomp(\bfd)$ we would like to give a criterion for the condition
\begin{equation} \label{eq: occurs}
\pi(C)\text{ occurs as a subquotient of }\pi(C_1)\times\dots\times\pi(C_k).
\end{equation}

Fix graded vector space $V^i$ of graded dimension $\bfd_i$, $i=1,\dots,k$ and let $V=V^1\oplus\dots\oplus V^k$.
We identify $\Lambda(V^1)\times\dots\times\Lambda(V^k)$ with a closed subvariety of
$\Lambda(V)$ by the embedding
\[
(x_1,\dots,x_k)\mapsto x_1\oplus\dots\oplus x_k.
\]
Thus, by definition,
\[
C_1\oplus\dots\oplus C_k\text{ is the closure of }G_V\cdot (C_1\times\dots\times C_k).
\]
Let
\[
p_i:\Lambda(V^1)\times\dots\times\Lambda(V^k)\rightarrow\Lambda(V^i)
\]
be the projection onto the $i$-th factor.

We make the following conjecture.\footnote{We thank Avraham Aizenbud and Jan Schr\"oer for discussions related to this conjecture.}

\begin{conjecture} \label{conj: genprod}
For any $C\in\Irrcomp(V)$, \eqref{eq: occurs} holds
if and only if there exists an irreducible component $D$ of $C\cap(C_1\times\dots\times C_k)$ such that
for all $i=1,\dots,k$ the restriction of $p_i$ is a dominant map from $D$ to $C_i$.

In particular,
\begin{enumerate}
\item If all but at most one of the $C_i$'s is rigid, then
\[
\JH(\pi(C_1)\times\dots\times\pi(C_k))=\{\pi(C)\mid C\supset C_1\oplus\dots\oplus C_k\}.
\]
\item If all $C_i$'s are rigid, then
\[
\JH(\pi(C_1)\times\dots\times\pi(C_k))=\{\pi(C)\mid x_1\oplus\dots\oplus x_k\in C\}.
\]
where $x_i$ is a rigid element of $C_i$, $i=1,\dots,k$.
\end{enumerate}
\end{conjecture}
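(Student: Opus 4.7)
The plan is to prove the general equivalence and then derive the two special cases. My strategy is to interpret the right-hand side geometrically: an irreducible component $D$ of $C\cap(C_1\times\dots\times C_k)$ on which each $p_i$ is dominant records a generic family of block-diagonal elements $x_1\oplus\dots\oplus x_k$ with $x_i$ generic in $C_i$ that nevertheless lie in the closure of $C$. Under the dictionary between Lusztig's nilpotent varieties and the representation theory of $\GL$, such degeneration data should correspond exactly to the Jordan--H\"older content of $\pi(C_1)\times\dots\times\pi(C_k)$.

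For the ``if'' direction, I would first establish the $k=2$ case. Given $D$ with generic point $x_1\oplus x_2$ lying in $C$, I plan to realize this direct sum as a specialization of generic extensions in $C_1*C_2$, and then combine Conjecture~\ref{conj: subconj}'s embedding $\pi(C_1*C_2)\hookrightarrow\pi(C_1)\times\pi(C_2)$ with the upper semicontinuity of Jordan--H\"older multiplicities in families of $\Pi$-modules to conclude that $\pi(C)$ appears in $\JH(\pi(C_1)\times\pi(C_2))$. The general $k$ then follows by induction on $k$, using associativity of parabolic induction and of the $*$-operation (cf.\ \cite[Corollary 7.4]{2103.12027}), together with a fiber-product argument showing that intermediate components inherit compatible dominance data.

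For the ``only if'' direction, I plan to use the Geiss--Leclerc--Schr\"oer philosophy that Jordan--H\"older multiplicities in products of dual canonical basis elements are computed by counting irreducible components of varieties of filtrations. Concretely, consider the variety $Z_C$ of pairs $(x,\mathcal{F})$ with $x\in C$ and $\mathcal{F}$ a $\Pi$-stable filtration of $x$ whose successive quotients have types $C_1,\dots,C_k$. The projection $Z_C\to C$ is generically finite, with degree equal to the desired multiplicity, while the associated-graded projection $Z_C\to C_1\times\dots\times C_k$ factors (via degeneration of $\mathcal{F}$ to its associated graded) through $C\cap(C_1\times\dots\times C_k)$, producing an irreducible component $D$ with the required dominance properties. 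The special cases then fall out easily: when all but one of the $C_i$'s is rigid, $C_1\times\dots\times C_k$ has an essentially unique generic configuration, so dominance of every $p_i$ forces $D$ to be open in $C_1\times\dots\times C_k$, which is equivalent to $C\supseteq C_1\oplus\dots\oplus C_k$; when every $C_i$ is rigid, this further collapses to $x_1\oplus\dots\oplus x_k\in C$.

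The main obstacle is that the general statement already subsumes Conjecture~\ref{conj: subconj}, Conjecture~\ref{conj: GS}, and Conjecture~\ref{conj: GSR}, all of which remain open beyond the regular setting; so a completely unconditional proof is not in reach pending substantial progress on those. A realistic short-term plan is therefore either to prove Conjecture~\ref{conj: genprod} conditional on the Geiss--Schr\"oer conjectures, or to restrict to the regular rigid regime of Theorem~\ref{thm: cor}, where the matching criterion of Remark~\ref{rem: matching} gives enough explicit control over $\hom_\Pi$ and $\ext^1_\Pi$ to make both the degeneration argument and the filtration-variety argument rigorous, and where the $\sigma$-decomposition machinery of \S\ref{sec: Basic} may provide the inductive leverage needed to implement the fiber-product step in the ``if'' direction.
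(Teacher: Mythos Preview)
The statement you are addressing is labeled a \emph{Conjecture} in the paper, and the authors explicitly say they have ``very little evidence'' for it; no proof is offered, only a list of consistency checks. So there is no paper proof to compare against, and your proposal should be read as an outline for attacking an open problem rather than a reconstruction of a known argument. You do acknowledge this in your final paragraph, but the earlier paragraphs are phrased as if the steps were routine, which they are not.

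Concretely, two of your key steps are themselves the heart of the difficulty. In the ``if'' direction you invoke ``upper semicontinuity of Jordan--H\"older multiplicities in families of $\Pi$-modules'' to pass from a degeneration inside $\Lambda(V)$ to a statement about subquotients of $\pi(C_1)\times\pi(C_2)$. But Jordan--H\"older multiplicities here live on the $\GL$ side, not the $\Pi$-module side; the only bridge between the two is precisely the circle of conjectures (Conjectures~\ref{conj: GS}--\ref{conj: subconj}) you later admit are open. There is no known semicontinuity principle that transports closure relations in $\Lambda(V)$ directly into inclusion of $\JH$-sets for parabolic inductions. In the ``only if'' direction, your assertion that the projection $Z_C\to C$ from the filtration variety is generically finite with degree equal to the multiplicity of $\pi(C)$ in $\pi(C_1)\times\dots\times\pi(C_k)$ is exactly the kind of geometric multiplicity formula that is \emph{not} available in this generality; for dual canonical bases this is a deep statement, and even in type $A$ it is not known beyond special cases. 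Finally, your reduction of the special cases is too quick: when one $C_i$ is non-rigid, $C_1\times\dots\times C_k$ still has positive-dimensional generic fibers over the rigid factors, so ``dominance of every $p_i$'' does not automatically force $D$ to contain an open subset of $C_1\times\dots\times C_k$; one needs the equidimensionality of $\Lambda(V)$ and a dimension count, which you have not supplied.
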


At the moment, we have very little evidence towards this conjecture.
However, we can make a few consistency checks and remarks.
\begin{enumerate}
\item (Cf.\ \cite[(9.7)]{2103.12027} and Remark \ref{rem: simsub}).
Suppose that $C_i=\prm_Q(\m_i)$, $i=1,\dots,k$. Then,
\[
\prm_Q(\m_1+\dots+\m_k)\supset C_1\oplus\dots\oplus C_k
\]
while
\[
Z(\m_1+\dots+\m_k)\text{ occurs as a subquotient of }Z(\m_1)\times\dots\times Z(\m_k)
\]
(in fact, with multiplicity one).

\item Let $\m=\Delta_1+\dots+\Delta_k$ and take $C_i=\prm_Q(\Delta_i)$.
Then, it is clear that $\prm_Q(\m')\supset C_1\oplus\dots\oplus C_k$ if and only if $M_Q(\m')$
lies in the closure of the $G_V$-orbit of $M_Q(\m)$. On the other hand, by \cite[\S7]{MR584084},
this condition also characterizes the occurrence of $Z(\m')$ as a subquotient in the standard module $\std(\m)$.
Thus, Conjecture \ref{conj: genprod} is true in this case.
The same holds if we take $C_i=\prm_{Q^{\op}}(\Delta_i)$.

\item In the case $k=2$, by \cite[(3.7)]{2103.12027}, Conjecture \ref{conj: genprod} implies that
$\pi(C_1*C_2)$ is an irreducible subquotient of $\pi(C_1)\times\pi(C_2)$, which is a weak form
of Conjecture \ref{conj: subconj}.
In particular, as in Lemma \ref{lem: conjcons}, Conjecture \ref{conj: genprod} implies that if $C_1$
and $C_2$ do not strongly commute, then $\pi(C_1)\times\pi(C_2)$ is reducible.
Also, Conjecture \ref{conj: genprod} implies Conjecture \ref{conj: GS2}.

\item Conjecture \ref{conj: genprod} implies that if $C$ is rigid, then $\pi(C)$ is $\square$-irreducible.
By the above, Conjecture \ref{conj: genprod} also implies that if $C$ does not strongly commute with itself, then $\pi(C)$ is not $\square$-irreducible.
We expect that as in Example \ref{ex: nonrigid}, if $C$ strongly commutes with itself but is not rigid,
then there exists an open, nonempty subset $C'\subset C$
such that for every $x\in C'$ there exists  a self-extension of $x$ that is not contained in $C*C=C\oplus C$.
This would imply that assuming Conjecture \ref{conj: genprod}, if $C$ is non-rigid, then $\pi(C)$ is not $\square$-irreducible.
However, as things stand, we do not know whether Conjecture \ref{conj: genprod} implies Conjecture \ref{conj: GSR}.

\item In general, by \cite{MR1944812}, $C_1\oplus\dots\oplus C_k$ is an irreducible component if and only if
$C_i$ strongly commutes with $C_j$ for all $i\ne j$. If this is the case, and in addition all but at most one of the $C_i$'s is rigid, then
Conjecture \ref{conj: genprod} asserts that $\pi(C_1)\times\dots\times\pi(C_k)$ is irreducible.
This is consistent with Conjectures \ref{conj: GS2} and \ref{conj: GSR}, in view of Corollary \ref{cor: allprod}.
\end{enumerate}

Part of the subtlety with Conjecture \ref{conj: genprod}, already in the case where all $C_i$'s are rigid,
is to get a handle on the condition
\[
C\supset C_1\oplus\dots\oplus C_k.
\]
This condition clearly implies that for every irreducible component $D$
\begin{gather*}
\hom_\Pi(D,C)\le\hom_\Pi(D,C_1)+\dots+\hom_\Pi(D,C_k),\\
\hom_\Pi(C,D)\le\hom_\Pi(C_1,D)+\dots+\hom_\Pi(C_k,D).
\end{gather*}
We do not know whether the converse implication also holds, let alone whether it suffices to consider
only certain $D$'s, which would make this condition feasible to check.
(A related problem was studied by Riedtmann, Zwara and others, see e.g., \cite{MR868301, MR1757882}.)

\begin{example}
Let $\Delta_i=[i,i+r]$, $i=1,\dots,r$ and $k\le r$. For any $i=1,\dots,k$ consider
\[
C_i=\prm_Q(\m_i)\text{ where }\m_i=\sum_{1\le j\le r\mid j\equiv i\bmod k}\Delta_j.
\]
It is known that any irreducible subquotient of $Z(\m_1)\times\dots\times Z(\m_k)$ is of the form $Z(\m)$
with $\m=\sum_{i=1}^r[w(i),i+r]$ where $w\in S_r$ is a permutation
which does not admit a decreasing subsequence of size $k+1$ \cite{MR4172670}.
It is also easy to see that if $C\supset C_1\oplus\dots\oplus C_k$, then $C=\prm_Q(\m)$ for $\m$ of this type.
Computer calculations affirms that conversely, at least up to $r=8$,
for such $\m$, $Z(\m)$ occurs in $Z(\m_1)\times\dots\times Z(\m_k)$.
We expect that this holds in general. (This is known for $k=2$, in which case the multiplicities are one \cite{MR4213657}.)
It would be already interesting to see that this expectation is consistent with Conjecture \ref{conj: genprod}.
\end{example}

\def\cprime{$'$}
\providecommand{\bysame}{\leavevmode\hbox to3em{\hrulefill}\thinspace}
\providecommand{\MR}{\relax\ifhmode\unskip\space\fi MR }
\providecommand{\MRhref}[2]{%
  \href{http://www.ams.org/mathscinet-getitem?mr=#1}{#2}
}
\providecommand{\href}[2]{#2}

\end{document}